\renewcommand{\Re}{\mathbb{R}}
\newcommand{\Id}{\mathbf{I}}
\newcommand{\J}{\mathbf{J}}
\newcommand{\mbf}[1]{\mathbf{#1}}
\newcommand{\A}{\mathbf{A}}
\newcolumntype{P}[1]{>{\centering\arraybackslash\vspace{-1ex}}m{#1}<{\vspace*{-1ex}}}
\DeclareMathOperator*{\argmin}{argmin}
\newif\iflong
\title{Linearly Implicit Multistep Methods for Time Integration \thanks{Submitted to the editors on 5/11/2020. \funding{This work was funded by awards NSF CCF--1613905, NSF ACI--1709727, AFOSR DDDAS FA9550-17-1-0015, and by the Computational Science Laboratory at Virginia Tech.}}}
\author{
Ross Glandon \thanks{Computational Science Laboratory, Department of Computer Science, Virginia Tech.  Blacksburg, Virginia 24060 (\email{rossg42@vt.edu})}
\and
Mahesh Narayanamurthi \thanks{Computational Science Laboratory, Department of Computer Science, Virginia Tech.  Blacksburg, Virginia 24060 (\email{maheshnm@vt.edu})}
\and
Adrian Sandu \thanks{Computational Science Laboratory, Department of Computer Science, Virginia Tech.  Blacksburg, Virginia 24060 (\email{sandu@cs.vt.edu})}
}
\newif\ifnosup
\begin{document}

\iftrue
\cslauthor{Ross Glandon, Mahesh Narayanamurthi, and Adrian Sandu}
\cslyear{19}
\cslreportnumber{13}
\cslemail{rossg42@vt.edu, maheshnm@vt.edu, sandu@cs.vt.edu}
\csltitle{Linearly Implicit Multistep Methods for Time Integration}
\csltitlepage
\fi

\maketitle

\begin{abstract}
Time integration methods for solving initial value problems are an important component of many scientific and engineering simulations. Implicit time integrators are desirable for their  stability properties, significantly relaxing restrictions on timestep size. However, implicit methods require solutions to one or more systems of nonlinear equations at each timestep, which for large simulations can be prohibitively expensive. This paper introduces a new family of linearly implicit multistep methods ({\sc Limm}), which only requires the solution of one linear system per timestep. Order conditions and stability theory for these methods are presented, as well as design and implementation considerations. Practical methods of order up to five are developed that have similar error coefficients, but improved stability regions, when compared to the widely used BDF methods. Numerical testing of a self-starting variable stepsize and variable order implementation of the new {\sc Limm} methods shows measurable performance improvement over a similar BDF implementation.
\end{abstract}

\begin{keywords}
Time integration, ODEs, linear multistep methods, linearly implicit schemes
\end{keywords}

\begin{AMS}
65L04, 65L05, 65L06
\end{AMS}

\section{Introduction}

In this paper we are concerned with the numerical solution of initial value problems (IVP):
\begin{equation}
\label{eqn:ode}
\frac{dy}{dt} = f(t, y), \quad t_0 \leq t \leq t_F, \quad y(t_0) = y_0; \quad y(t) \in \Re^N,~~ f : \Re \times \Re^N \rightarrow \Re^N.
\end{equation}
Systems of ordinary differential equations (ODEs) of this form appear in a wide variety of scientific and engineering simulations. Some types of simulations, such as those of chemical kinetics or aerosol dynamics \cite{Sandu_2003_KPPSEN2, Sandu_2003_aerosolFramework}, can be directly modeled by systems of densely coupled ODEs. Others, such as those of fluid dynamics \cite{Munson_2013_aa}, arise from the semi-discretization in space of partial differential equations (PDEs) via the method of lines, resulting in a large sparse system of ODEs.

Two time integration families of methods are used to discretize \eqref{eqn:ode}: explicit or implicit. Explicit schemes advance the solution to a new timestep using only information from previous steps, and are simple in structure with very low computational cost per timestep. However, they have stability limitations that result in problem-dependent bounds on the largest allowable stepsizes. For stiff problems, implicit time integration methods that determine solutions at a new timestep using both past and future information, are preferable. These schemes avoid stability-bound stepsize limitations at the cost of solving one or more nonlinear systems per timestep.

As solving large nonlinear systems can be very expensive, many types of linearly implicit methods have been developed that only require solutions of linear systems at each step. Rosenbrock methods \cite{Rosenbrock_1963}
\iflong
(and their many extensions \cite{steihaug1979, Rang2005, Rahunanthan2010, Schmitt1995, Wensch2005, Sandu_2014_ROK, Sandu_2020_BOROK}) 
\else
(and their many extensions \cite{steihaug1979, Sandu_2014_ROK, Sandu_2020_BOROK}) 
\fi
are linearized implicit Runge-Kutta methods. Implicit-explicit (IMEX) methods \cite{Sandu_2010_extrapolatedIMEX, Sandu_2014_IMEX-GLM, Sandu_2016_highOrderIMEX-GLM, Akrivis_1999_IMEX, akrivis2015} couple an implicit scheme for the stiff component with an explicit scheme for the non-stiff component of a split problem.  A common splitting treats the nonlinear part of the problem explicitly and the linear part implicitly, therefore avoiding the need for nonlinear solves. Exponential integrators \cite{Hochbruck_1998_exp,Tokman_2011_EPIRK, Sandu_2014_expK, Sandu_2020_stiff_PEXP,Sandu_2019_EPIRKW} also effectively treat a linear-nonlinear problem splitting, with the linear portion solved via an exponential integrating factor. It is also common to linearize implicit methods which would normally require nonlinear solves by taking only a single Newton iteration \cite{yang2015, song2017}, or to replace parts of the implicit scheme with an extrapolation of past values \cite{forti2015, yao2017,Sandu_2014_IMEX-RK,Sandu_2014_IMEX_GLM_Extrap}, at the possible cost of order reduction and/or reduced stability.

In this paper we construct  linearly implicit multistep methods ({\sc Limm}) in much the same way that Rosenbrock methods are obtained from implicit Runge-Kutta methods. We determine order conditions which account for the linearization \cite{Butcher_2016_book, Hairer_book_I}, and solve for a family of $k$-step order $k$ methods for $k = 1,\dots,5$. {\sc Limm} methods have more free coefficients than traditional linear multistep methods with the same number of steps, and this additional freedom enables the optimization of accuracy and stability properties.  The new schemes designed herein have linear stability regions larger than, and error constants comparable to, the widely used BDF \cite{curtiss1952} family of methods.

The remainder of the paper is organized as follows. Section \ref{sec:limm} describes the construction of the {\sc Limm} general form. Section \ref{sec:order_conditions} builds order conditions for $k$-step {\sc Limm} methods of order $p$. Section \ref{sec:stability} considers the linear stability of {\sc Limm} methods, and Section \ref{sec:convergence} their convergence for step sizes not limited by the stiffness of the system. Section \ref{sec:methodderiv} discusses the design of new methods with optimal stability properties and error coefficients, and develops a family of $k$-step order $k$ methods with $k = 1,\dots,5$. Section \ref{sec:impdetails} provides the details necessary for an efficient implementation of {\sc Limm} methods with variable stepsize and variable order. Section \ref{sec:results} reports numerical results comparing {\sc Limm} to BDF methods. Finally, Section \ref{sec:conclusions} draws concluding remarks.

\section{Linearly Implicit Multistep Methods}
\label{sec:limm}

A linear $k$-step method computes the solution of \eqref{eqn:ode} as follows \cite[Chapter III.2]{Hairer_book_I}:
\begin{equation}
\label{eqn:lmm}
\sum_{i=-1}^{k-1} \alpha_i\, y_{n-i} = h_n \sum_{i=-1}^{k-1} \beta_i\, f(t_{n-i}, y_{n-i}),
\end{equation}
where the numerical solution $y_{n-i} \approx y(t_{n-i})$ and the step size is $h_n = t_{n+1}-t_{n}$.
The simplest way to obtain a linearly implicit multistep method is to linearize the implicit evaluation of $f(t_{n+1}, y_{n+1})$ in \eqref{eqn:lmm} about $\left(t_n, y_n\right)$, to obtain:
\begin{equation}
\begin{split}
\label{eqn:lmm-linearized}
\sum_{i=-1}^{k-1} \alpha_i\, y_{n-i} &= h_n \, \sum_{i=0}^{k-1}\, \beta_i'\, f(t_{n-i}, y_{n-i}) \\
 &+ h_n \beta_{-1}\, f_y(t_n, y_n) \, (y_{n+1} - y_n) + h_n^2\beta_{-1}\frac{\partial f}{\partial t}(t_n, y_n),
\end{split}
\end{equation}
where $\beta_0' = \beta_0 + \beta_{-1}$, $\beta_i' = \beta_i$ for $i=1,\dots,k-1$.
\begin{remark}[Autonomous and non-autonomous forms]
In order to simplify the notation in the remainder of the paper, we will make use of the autonomous form of \eqref{eqn:ode}, with $\frac{dy}{dt} = f(y)$ and the notation
\[
f_{n} \coloneqq f(y_{n}), \quad \J_{n} \coloneqq f_y(y_{n}).
\]
As the original non-autonomous form can be recovered by stacking $t$ with the vector $y$ and making corresponding changes to $f$ and $\J$, the methods developed for autonomous problems are also applicable to non-autonomous problems.
\end{remark}
Now, because the direct linearization approach in \eqref{eqn:lmm-linearized} maintains the same number of degrees of freedom, one can expect a degradation of accuracy and stability properties of \eqref{eqn:lmm-linearized} when compared to the standard nonlinear scheme \eqref{eqn:lmm} with the same number of steps. In order to increase the number of degrees of freedom one can generalize the approach by retaining in the formulation past linearized steps scaled by new $\mu$ coefficients:
\begin{equation}
\label{eqn:LIMM_v0}
\sum_{i=-1}^{k-1} \alpha_i\, y_{n-i} = h_n \,\sum_{i=0}^{k-1} \beta_i\, f_{n-i} + h_n \,\sum_{i=-1}^{k-1} \mu_i \,\J_{n-i-1} \,y_{n-i}.
\end{equation}
However, method \eqref{eqn:LIMM_v0} requires the storage of multiple past Jacobian-vector products in addition to past solutions and function values, which is not desirable. 
For this reason we consider a hybrid of the two approaches \eqref{eqn:lmm-linearized} and \eqref{eqn:LIMM_v0}, and, drawing inspiration from Rosenbrock methods \cite{Rosenbrock_1963}, \cite[Section IV.7]{Hairer_book_II}, define the following computational process.
\begin{definition}[{\sc Limm} methods]
A linearly implicit multistep method ({\sc Limm}) advances the numerical solution of  \eqref{eqn:ode} over one step $[t_n,t_{n+1}]$ as follows:
\begin{equation}
\label{eqn:LIMM_definition}
\sum_{i=-1}^{k-1} \alpha_i\, y_{n-i} = h\, \sum_{i=0}^{k-1} \beta_i\, f_{n-i} + h_n \, \J_n\, \left( \sum_{i=-1}^{k-1} \mu_i\, y_{n-i} + h_n \, \sum_{i=0}^{k-1} \nu_i\, f_{n-i}  \right),
\end{equation}
where, without loss of generality, $\alpha_{-1}=1$.
 
A linearly implicit multistep method of W-type ({\sc Limm-w}) advances the numerical solution using \eqref{eqn:LIMM_definition}, where the exact Jacobian $\J_n$ is replaced by an arbitrary matrix $\A_n$. This  matrix is chosen to ensure the numerical stability of the scheme, but without impacting the order of accuracy of the method.

The non-autonomous form of \eqref{eqn:LIMM_definition} adds the following term to the right side of equation \eqref{eqn:LIMM_definition}
\[
 h_n\, \frac{\partial f}{\partial t}(t_n,y_n)\, \left(\sum_{i=-1}^{k-1} \mu_i t_{n-i} + h_n\sum_{i=0}^{k-1} \nu_i\right).
\]
\end{definition}
We note that $k$-step {\sc Limm} schemes \eqref{eqn:LIMM_definition} have $4k+1$ free coefficients (compared to $2k+1$ for classical nonlinear methods \eqref{eqn:lmm}), and require storing only $k$ past solution and $k$ past function values at each step (the same as nonlinear methods \eqref{eqn:lmm}). Evaluation of the  {\sc Limm} numerical solution at each step  \eqref{eqn:LIMM_definition} requires to only solve a linear system of equations, expressed in a computationally efficient form as:
\begin{equation*}
\begin{split}
\left( \mathbf{I} - h_n\, \mu_{-1}\, \J_n\right)\cdot z &= \sum_{i=0}^{k-1} \left( \mu_i/\mu_{-1} -\alpha_i\right)\, y_{n-i} + h_n \, \sum_{i=0}^{k-1} \left(\nu_i/\mu_{-1}+\beta_i\right)\, f_{n-i}, \\
y_{n+1} &=  z - \sum_{i=0}^{k-1} (\mu_i/\mu_{-1})\, y_{n-i} - h_n \, \sum_{i=0}^{k-1} (\nu_i/\mu_{-1})\, f_{n-i}. 
\end{split}
\end{equation*}
Rosenbrock-W methods  \cite[Section IV.7]{Hairer_book_II} seek to increase the computational efficiency of Rosenbrock methods by replacing the exact Jacobian $\J_n$ with an arbitrary matrix $\mathbf{A}_n$, such that the corresponding linear system at each step can be solved more easily. This concept is extended to linearly implicit multistep methods by considering {\sc Limm-w} schemes.
%

\section{Order Conditions}
\label{sec:order_conditions}

Derivation of the order conditions amounts to equating the Taylor series coefficients of the numerical and exact solutions about the current time $t_n$, up to a specific predetermined order. This is the approach taken for building the classical order condition theory for linear multistep methods \cite[Chapter III.2]{Hairer_book_I}. We discuss a direct Taylor series approach to obtain order conditions for {\sc Limm-w} schemes in Section \ref{sec:LIMW-order}. However, constructing Taylor series by repeatedly differentiating the numerical solution becomes increasingly difficult for high-order schemes, and we use a B-series approach \cite{Hairer_book_I,Butcher_2016_book} to derive {\sc Limm} order conditions in Section \ref{sec:LIMM-order}.

We assume that the linearly implicit scheme \eqref{eqn:LIMM_definition} computes the solution at a non-uniform time grid $t_{n-i}$, and express the distances between the solution points as fractions of the current step $h_n = t_{n+1}-t_{n}$:
\begin{subequations}
\label{eqn:abscissae}
\begin{equation}
t_{n-i} = t_n - c_i\, h_n, \quad i=-1,\dots,k; \qquad c_{-1} = -1, \quad c_{0} = 0.
\end{equation}
Many adaptive step analyses consider the ratios of consecutive step sizes $\omega_n$. One can express $c_i$ as a function of these step size ratios as follows:
\begin{equation}
\label{eqn:step-size-ratios}
\begin{split}
\iflong
& t_{n-\ell+1} - t_{n-\ell} = (t_{n+1}-t_{n}) \prod_{j=0}^{\ell+1} \frac{t_{n-j}-t_{n-j-1}}{t_{n-j+1}-t_{n-j}} = h\, \prod_{j=0}^{\ell+1} \omega_{n-j}^{-1}, \\
& t_{n} = t_{n-i}  + \sum_{\ell=1}^{i} (t_{n-\ell+1} - t_{n-\ell}) = t_{n-k}  + h\, \sum_{\ell=1}^{i} \prod_{j=0}^{\ell+1} \omega_{n-j}^{-1}, \\
\fi
& 
\omega_n = \frac{t_{n+1}-t_{n}}{t_{n}-t_{n-1}}, \qquad
c_i = \sum_{\ell=1}^{i} \prod_{j=0}^{\ell+1} \omega_{n-j}^{-1}, \qquad i \ge 1.
\end{split}
\end{equation}
\end{subequations}

\begin{remark}[Method coefficients notation]
When operating on this non-uniform time grid, all the coefficients, $\alpha_i$, $\beta_i$, $\mu_i$, and $\nu_i$, $i \in [-1,k-1]$, of method \eqref{eqn:LIMM_definition} are, in general, functions of the stepsize fractions $c_j$, $j \in [1,k]$ (or analogously, of the stepsize ratios $\omega_n$). In order to simplify our notation, we will leave out the function notation unless it is needed for clarity, and will explicitly call out when we are only considering fixed-stepsize methods (as in the latter half of Section \ref{sec:methodderiv}).
\end{remark}

\subsection{Order conditions for {\sc Limm-w} methods}
\label{sec:LIMW-order}

The order conditions for {\sc Limm-w}  methods \eqref{eqn:LIMM_definition} can be obtained by the standard Taylor series approach, leading to the following result.
\begin{theorem}[{\sc Limm-w} order conditions]
The {\sc Limm-w}  method \eqref{eqn:LIMM_definition} has order of consistency $p$ if and only if its coefficients satisfy:
\begin{subequations}
\label{eqn:LIMW-order-conditions}
\begin{align}
\label{eqn:LIMW-order-conditions-0-alpha}
 \sum_{i=-1}^{k-1} \alpha_i &= 0,  \\
\label{eqn:LIMW-order-conditions-0-mu}
\sum_{i=-1}^{k-1} \mu_i &= 0, \\
\label{eqn:LIMW-order-conditions-traditional}
\sum_{i=-1}^{k-1}  \alpha_i\,c_i^\ell + \ell\, \sum_{i=0}^{k-1} \beta_i\, c_i^{\ell-1} &= 0,  \qquad \ell = 1,\dots,p, \\
\label{eqn:LIMW-order-conditions-mu}
\sum_{i=-1}^{k-1} \mu_i\, c_i^{\ell-1}  - (\ell-1)\, \sum_{i=0}^{k-1} \nu_i\, c_i^{\ell-2} &= 0, \qquad \ell = 2,\dots,p. 
\end{align}
\end{subequations}
\end{theorem}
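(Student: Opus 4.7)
The plan is to use a direct Taylor series matching argument, exploiting the fact that in the W-type scheme the matrix $\A_n$ is arbitrary so its coefficient must match independently of the rest. I substitute the exact solution $y(t)$ into \eqref{eqn:LIMM_definition} with $\J_n$ replaced by $\A_n$, and define the residual
\begin{equation*}
R(h_n) \;=\; \sum_{i=-1}^{k-1} \alpha_i\, y(t_{n-i}) - h_n\!\!\sum_{i=0}^{k-1} \beta_i\, y'(t_{n-i}) - h_n\, \A_n\, \Bigl( \sum_{i=-1}^{k-1} \mu_i\, y(t_{n-i}) + h_n\!\!\sum_{i=0}^{k-1} \nu_i\, y'(t_{n-i})\Bigr).
\end{equation*}
Consistency of order $p$ is equivalent to $R(h_n) = O(h_n^{p+1})$ for every smooth $y$ and every matrix $\A_n$. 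Since $\A_n$ is an independent parameter, the proof splits into matching the $\A_n$-free part and the $\A_n$-multiplied bracketed part separately.

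Next I would Taylor expand each $y(t_{n-i}) = y(t_n - c_i h_n)$ and $y'(t_{n-i})$ about $t_n$, using $y(t_n - c_i h_n) = \sum_{\ell \ge 0} \frac{(-c_i h_n)^\ell}{\ell!}\, y^{(\ell)}(t_n)$. Collecting the coefficient of $\frac{h_n^\ell}{\ell!}\, y^{(\ell)}(t_n)$ in the $\A_n$-free part gives, for $\ell = 0$, the quantity $\sum_i \alpha_i$, yielding \eqref{eqn:LIMW-order-conditions-0-alpha}, and for $\ell \ge 1$ the quantity $(-1)^\ell\bigl(\sum_i \alpha_i c_i^\ell + \ell \sum_i \beta_i c_i^{\ell-1}\bigr)$, yielding \eqref{eqn:LIMW-order-conditions-traditional}. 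For the $\A_n$-multiplied bracket, the extra factor $h_n$ in front of $\A_n$ means the bracket itself must be $O(h_n^p)$. Expanding its $h_n^m y^{(m)}(t_n)$ coefficient for $m = 0, 1, \dots, p-1$ gives $\sum_i \mu_i$ at $m = 0$ (this is \eqref{eqn:LIMW-order-conditions-0-mu}) and $(-1)^m\bigl(\sum_i \mu_i c_i^m - m \sum_i \nu_i c_i^{m-1}\bigr)$ for $m \ge 1$; relabeling $\ell = m+1$ produces \eqref{eqn:LIMW-order-conditions-mu} for $\ell = 2, \dots, p$. Setting each coefficient to zero supplies the ``only if'' direction, and reversing the expansion gives the ``if'' direction.

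The main obstacle, such as it is, is not conceptual but organizational: keeping straight that the $\A_n$-bracket is multiplied by an extra $h_n$ (so its expansion must vanish one order lower than the $\A_n$-free part), and carefully tracking the $(-1)^\ell$ signs produced by $(t_{n-i}-t_n)^\ell = (-c_i h_n)^\ell$ so that the final conditions appear with the plus sign in \eqref{eqn:LIMW-order-conditions-traditional} and the minus sign in \eqref{eqn:LIMW-order-conditions-mu}. The independence argument that allows treating the $\A_n$ and non-$\A_n$ parts separately also deserves a line of justification, since for each fixed $\A_n$ the two contributions could in principle cancel; the point is that because $\A_n$ is not tied to $f_y(y_n)$, one may choose $\A_n = 0$ to force vanishing of the non-$\A_n$ terms, and then any non-zero $\A_n$ forces the bracket to vanish as well.
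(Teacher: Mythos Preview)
Your proposal is correct and follows essentially the same Taylor-series approach as the paper: substitute the exact solution to form a residual, expand about $t_n$, and separately match the $\A_n$-free and $\A_n$-multiplied coefficients at each power of $h_n$. The paper additionally writes the step explicitly as $y_{n+1}-y(t_{n+1}) = -(\mathbf{I}-h_n\A_n)^{-1}r_n$ to pass from $r_n = O(h_n^{p+1})$ to the local truncation error, which you gloss over with ``consistency of order $p$ is equivalent to $R(h_n)=O(h_n^{p+1})$''; you should state that link, but otherwise your argument (including the clean $\A_n=0$ justification for decoupling the two families of conditions, which the paper leaves implicit) matches.
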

\begin{proof}
We show that the local truncation error  is of order $p+1$. To this end we apply the method \eqref{eqn:LIMM_definition} starting with exact past solution values:
\begin{equation}
\label{eqn:LIMM_lte}
\begin{split}
y_{n+1} + \sum_{i=0}^{k-1} \alpha_i\, y(t_{n-i}) &= h_n \, \sum_{i=0}^{k-1} \beta_i\, y'(t_{n-i}) + h_n \, \A_n\, \sum_{i=0}^{k-1} \mu_i\, y(t_{n-i}) \\
&\quad + h_n\,\mu_{-1}\, \A_n\, y_{n+1} + h_n\, \A_n\, \sum_{i=0}^{k-1} \nu_i\, h_n\, f(t_{n-i},y(t_{n-i})),
\end{split}
\end{equation}
and show that $y_{n+1} - y(t_{n+1}) \sim \mathcal{O}(h_n^{p+1})$.
We insert the exact solution in \eqref{eqn:LIMM_lte}:
\begin{equation}
\label{eqn:LIMM_exact}
\begin{split}
\sum_{i=-1}^{k-1} \alpha_i\, y(t_{n-i}) & = h_n\, \sum_{i=0}^{k-1} \beta_i\, y'(t_{n-i}) + h_n\, \A_n\, \sum_{i=-1}^{k-1} \mu_i\, y(t_{n-i})  \\
&\quad + h_n^2\, \A_n\, \sum_{i=0}^{k-1} \nu_i\, y'(t_{n-i}) + r_n, 
\end{split}
\end{equation}
where $r_n$ is the local residual, and expand in Taylor series about the current time $t_n$
\begin{equation*}
\begin{split}
& \sum_{\ell \ge 0}  \sum_{i=-1}^{k-1} \alpha_i\,  \frac{(-c_i)^\ell\,h_n^\ell}{\ell!} y^{(\ell)}(t_n) 
 = \sum_{\ell \ge 1} \sum_{i=0}^{k-1} \beta_i\, \frac{(-c_i)^{\ell-1}\,h_n^{\ell}}{(\ell-1)!} y^{(\ell)}(t_n) \\
&+ \A_n\,\sum_{\ell \ge 1}  \sum_{i=-1}^{k-1} \mu_i\, \frac{(-c_i)^{\ell-1}\,h_n^\ell}{(\ell-1)!} y^{(\ell-1)}(t_n) 
 + \A_n\,\sum_{\ell \ge 2} \sum_{i=0}^{k-1} \nu_i\, \frac{(-c_i)^{\ell-2}\,h_n^{\ell}}{(\ell-2)!} y^{(\ell-1)}(t_n) + r_n.
\end{split}
\end{equation*}
Equating powers of $h_n$ on both sides of the equality up to power $p$ yields \eqref{eqn:LIMW-order-conditions}, and $r_n \sim \mathcal{O}(h_n^{p+1})$.
\iflong
For $h^0$ we obtain \eqref{eqn:LIMW-order-conditions-0-alpha}.
For $h^\ell$, $1 \le \ell \le p$ we have two equations: one for the free terms, which leads to is \eqref{eqn:LIMW-order-conditions-traditional}, and another one for the terms multiplied by $\A_n$, which leads to \eqref{eqn:LIMW-order-conditions-0-mu} for $\ell=1$ and to \eqref{eqn:LIMW-order-conditions-mu} for $\ell \ge 2$.
\fi
Subtracting \eqref{eqn:LIMM_exact} from \eqref{eqn:LIMM_exact-initialization} leads to the local truncation error:
\begin{equation}
\label{eqn:lte-residual}
y_{n+1} - y(t_{n+1}) = -\left(\mathbf{I} - h_n \,\A_n\right)^{-1}\,r_n,
\end{equation}
and therefore $y_{n+1} - y(t_{n+1})  \sim \mathcal{O}(h_n^{p+1})$ in the asymptotic case $h_n \to 0$ \cite[Chapter III.2]{Hairer_book_I}. Consequently the method has order $p$. 
\end{proof}

\begin{remark}[Stiff case]
From \eqref{eqn:LIMM_exact} we see that the residual has the form $r_n = q_n + h_n\A_n\, s_n$, where imposing 
\eqref{eqn:LIMW-order-conditions-0-alpha}, \eqref{eqn:LIMW-order-conditions-traditional} leads to $q_n \sim \mathcal{O}(h_n^{p+1})$, and imposing 
\eqref{eqn:LIMW-order-conditions-0-mu}, \eqref{eqn:LIMW-order-conditions-mu} leads to $s_n \sim \mathcal{O}(h_n^{p})$. From \eqref{eqn:lte-residual} the local truncation error is
\begin{equation}
\label{eqn:lte-residual-stiff}
y_{n+1} - y(t_{n+1}) = -\left(\mathbf{I} - h_n \,\A_n\right)^{-1}\,(q_n + s_n) + s_n.
\end{equation}
In the asymptotic case where $h_n \,\A_n \to 0$ the $s_n$ components from the two terms cancel, leaving $y_{n+1} - y(t_{n+1}) = q_n + \mathcal{O}(h_n)\,s_n$, which recovers Theorem \ref{sec:LIMW-order}.
Consider now the stiff case where $h_n \to 0$ but $\Vert h_n \,\A_n \Vert \not\to 0$, and the above cancellation does not happen.
Assume that the matrix $\A_n$ in \eqref{eqn:LIMM_definition} has simple eigenvalues with non-positive real parts, which implies that  $\Vert (\mathbf{I} - h_n \A_n)^{-1} \Vert \le C < \infty$ for all step sizes $h_n > 0$.  The local error \eqref{eqn:lte-residual-stiff} is dominated by $s_n \sim \mathcal{O}(h_n^{p})$. A simple way to recover the full order is to impose conditions \eqref{eqn:LIMW-order-conditions-mu} up to order $p+1$, which gives $s_n \sim \mathcal{O}(h_n^{p+1})$.
\end{remark}

\begin{remark}[Connection with traditional LMM]
Equations \eqref{eqn:LIMW-order-conditions-0-alpha} and \eqref{eqn:LIMW-order-conditions-traditional} represent the order conditions for a traditional linear multistep method \cite[Chapter III.2]{Hairer_book_I}. Consequently, the coefficients $\{\alpha_i,\beta_i\}$ correspond to an order $p$ explicit $k$-step method. The coefficients $\mu_i$ and $\nu_i$ are selected such that they only contribute $\mathcal{O}(h_n^{p+1})$ to the local truncation error. 
\end{remark}

\begin{remark}[Coefficients for variable steps]
\label{rem:variable-step-dependency}
The order conditions \eqref{eqn:LIMW-order-conditions} are linear in the unknown method coefficients, with the system matrix depending on stepsize fractions $c_j$, (or analogously, of the stepsize ratios $\omega_j$). Due to the Vandermonde structure the system has a unique solution for any sequence of variable steps with $j - e \le c_j \le j +e$ with $0 \le e < 0.5$ (including the case of  fixed step sizes where $c_j = j$). Moreover, in this case the method coefficients  $\alpha_i$, $\beta_i$, $\mu_i$, $\nu_i$ depend continuously on the stepsize fractions $c_j$ (and analogously, on the stepsize ratios $\omega_j$).  
\end{remark}

Equation \eqref{eqn:LIMW-order-conditions} pose $2p+1$ constraints for the $4k+1$ free coefficients of the method. Simpler order conditions are possible when $\A_n$ is the exact Jacobian $f_y(t_n,y_n)$, or a well specified approximation of it. In this case the direct Taylor series  approach becomes difficult to handle, and  we employ the Butcher series machinery.

In practice we may consider only linearly implicit schemes \eqref{eqn:LIMM_definition} with $\nu_i=0$, $i=0,\dots,k-1$, since the remaining $3k+1$ free coefficients offer sufficient degrees of freedom for good method design. Sections \ref{sec:methodderiv}, \ref{sec:impdetails}, and \ref{sec:results} describe construction and testing of methods that make this simplification, and Section \ref{sec:convergence} considers convergence.

\subsection{Order conditions for {\sc Limm} methods with exact Jacobian}
\label{sec:LIMM-order}

We employ the Butcher series (B-series) formalism to derive order conditions for the {\sc Limm} methods \eqref{eqn:LIMM_definition}.
B-series \cite{Hairer_book_I} offer a representation of Taylor series expansions of numerical and exact solutions as expansions over a set of elementary differentials, represented graphically using rooted-trees. 

We consider the family of trees $\mathcal{T}_1 = \mathcal{T} \cup \{\emptyset\} \cup \{ \tau_\circ\}$, where
$\mathcal{T}$ is the set of Butcher T-trees \cite{Hairer_book_I},  $\emptyset$  denotes the empty tree, $\tau \in \mathcal{T}$ denotes the tree with a single node,  and $\tau_\circ$ denotes a special tree with a single fat node (a color different than that of the nodes of $\mathcal{T}$). If $\mathfrak{t}_1, \dots, \mathfrak{t}_L \in \mathcal{T}$ then $[\mathfrak{t}_1 \dots \mathfrak{t}_L] \in \mathcal{T}$ denotes the tree obtained by joining all $L$ subtrees to a single root. 
Each tree in $\mathcal{T}$ corresponds to a traditional elementary differential \cite{Hairer_book_I}. In addition,  $\mathcal{F}(\emptyset)(y) = y$ and $\mathcal{F}(\tau_\circ)(y) = \J_n \, y$. The latter elementary differential appears in the numerical solution, but not in the exact solution.

A B-series expansion over the set of rooted-trees $\mathcal{T}_1$ is \cite{Hairer_book_I}:
\begin{equation}
	B(\mathsf{a}, y) = \sum_{\mathfrak{t} \in \mathcal{T}_1} \mathsf{a}(\mathfrak{t})\cdot \frac{h^{|\mathfrak{t}|}}{\sigma(\mathfrak{t})}\,\mathcal{F}(\mathfrak{t})(y)
\end{equation}
where
$\mathsf{a}: \mathcal{T}_1  \mapsto \mathbb{R}$ is a function mapping trees to real values (the coefficients of the B-series),
$|\mathfrak{t}|$ the order (or number of nodes) of the tree,
$\sigma(\mathfrak{t})$ is the symmetry of the tree (the number of equivalent rearrangements of $\mathfrak{t}$) \cite{Hairer_book_I}, 
and $\mathcal{F}(\mathfrak{t})(y)$ is the elementary differential corresponding to a tree $\mathfrak{t}$, evaluated at $y$. 
B-series are the gold standard approach to construct the order conditions for one-step methods such as Runge-Kutta \cite{Hairer_book_I,Butcher_2016_book,Sandu_2019_MRI-GARK,Sandu_2020_MR-GARK_Implicit}, Rosenbrock \cite{Sandu_2014_ROK}, and exponential \cite{Sandu_2014_expK,Sandu_2019_EPIRKW,Sandu_2020_stiff_PEXP} schemes.  

To study the {\sc Limm} local truncation error we consider the method \eqref{eqn:LIMM_definition} initialized with the exact solution evaluated at a series of $k$ previous time-steps, $\{t_n, \dots, t_{n - k + 1}\}$:
\begin{equation}
\label{eqn:LIMM_exact-initialization}
\begin{split}
& y_{n+1} + \sum_{i=0}^{k-1} \alpha_i\, y(t_{n-i}) = h_n\, \sum_{i=0}^{k-1} \beta_i\, y'(t_{n-i}) + h_n\, f_y\big(t_n,y(t_n)\big)\, \sum_{i=0}^{k-1} \mu_i\, y(t_{n-i}) \\
&\qquad + h_n\,\mu_{-1}\, f_y\bigl(t_n,y(t_n)\bigr)\, y_{n+1}  + h_n\, f_y\big(t_n,y(t_n)\big)\, \sum_{i=0}^{k-1} \nu_i\, \left( h_n\,y'(t_{n-i}) \right).
\end{split}
\end{equation}
The time-shifted exact solution \eqref{eqn:ode} has the following B-series expansion over $\mathcal{T}_1$:
\begin{subequations}
\begin{equation}
\label{eqn:exact_y_b_series}
y(t_n+c\,h_n) \sim B(a_{(c)},y(t_n)), \qquad a_{(c)}(\mathfrak{t}) 
= \begin{cases}
0, & \mathfrak{t} = \tau_\circ, \\  
1, & \mathfrak{t} = \emptyset, \\  
\frac{c^{|\mathfrak{t}|}}{\gamma(\mathfrak{t})}, & |\mathfrak{t}| \ge 1. 
\end{cases}
\end{equation}
where $\mathfrak{t} \in \mathcal{T}$ is a T-tree, and  $\gamma(\mathfrak{t})$ is defined as the product of order of the tree $\mathfrak{t}$ and of all its subtrees \cite{Hairer_book_I}. Similarly, the time-shifted exact solution derivative of  \eqref{eqn:ode} has the following B-series expansion over $\mathcal{T}_1$:
\begin{equation}
\label{eqn:exact_f_b_series}
 h_n\,y'(t_n+c\,h_n) \sim B(\mathrm{D}a_{(c)},y(t_n)), \qquad (\mathrm{D}a_{(c)})(\mathfrak{t}) = 
\begin{cases}
0, & \mathfrak{t} = \tau_\circ, \\  
0, & \mathfrak{t} = \emptyset, \\  
\frac{|\mathfrak{t}|\,c^{|\mathfrak{t}|-1}}{\gamma(\mathfrak{t})}, & |\mathfrak{t}| \ge 1. 
\end{cases}
\end{equation}
where $\tau \in \mathcal{T}$ is the T-tree with a single node, and $[\mathfrak{u}_1,\dots,\mathfrak{u}_L]$ denotes the tree where the root has $L$ children, each the root of a subtree $\mathfrak{u}_i$.

%
The Jacobian matrix times a B-series is another B-series over $\mathcal{T}_1$ \cite{Butcher_2010_exp-order}:
\begin{equation}
\label{eqn:Ja}
\begin{split}
& h_n\, f_y(t_n,y(t_{n})) \cdot \textnormal{B}(a,y(t_{n}) \sim \textnormal{B}\bigl( \mathrm{J} a, y(t_{n})\bigr), \\
& 
(\mathrm{J} a)(\mathfrak{t}) = \begin{cases}
a(\emptyset), & \mathfrak{t} = \tau_\circ, \\  
0 & \mathfrak{t}=\emptyset,~\mathfrak{t}=\tau, \\
 a(\mathfrak{u}) & \textnormal{for}~\mathfrak{t}=[\mathfrak{u}], ~ \mathfrak{u} \ne \emptyset, \\
 0 & \textnormal{otherwise}.
 \end{cases}
\end{split}
\end{equation}
\end{subequations}
The numerical solution of \eqref{eqn:LIMM_exact-initialization} is a B-series over $\mathcal{T}_1$:
\begin{equation}
\label{eqn:numerical_y_b_series}
y_{n+1} \sim B(\theta,y(t_n)).
\end{equation}
Next, in \eqref{eqn:LIMM_exact-initialization} replace each quantity by the corresponding B-series. Use the exact solutions \eqref{eqn:exact_y_b_series} and their derivatives \eqref{eqn:exact_f_b_series} at the current and past times $t_{n-k},\dots,t_{n}$, and the numerical solution  \eqref{eqn:numerical_y_b_series} at $t_{n+1}$, to obtain:
\begin{equation*}
\begin{split}
\theta = \mu_{-1}\, (\mathrm{J} \theta) - \sum_{i=0}^{k-1} \alpha_i\, a_{(-c_i)} + \sum_{i=0}^{k-1} \beta_i\, (\mathrm{D}a_{(-c_i)}) \\
+ \sum_{i=0}^{k-1} \mu_i\, (\mathrm{J} a_{(-c_i)}) 
+ \sum_{i=0}^{k-1} \nu_i\, (\mathrm{J}(\mathrm{D}a_{(-c_i)})). 
\end{split}
\end{equation*}
This leads to the following recursive definition of $\theta$ over the set $\mathcal{T}_1$ of trees:
\begin{equation}
\label{eqn:LIMM_gen-bseries}
\theta(\mathfrak{t}) = 
\begin{cases}
- \sum_{i=0}^{k-1} \alpha_i, & \mathfrak{t} = \emptyset, \\[6pt]
 \sum_{i=0}^{k-1} \mu_i + \mu_{-1}\, (- \sum_{i=0}^{k-1} \alpha_i), & \mathfrak{t} = \tau_\circ, \\[6pt]
- \sum_{i=0}^{k-1} \alpha_i\, (-c_i) + \sum_{i=0}^{k-1} \beta_i, & \mathfrak{t} = \tau, \\[6pt]
\mu_{-1}\, \theta(\mathfrak{u}_1) - \frac{1}{\gamma(\mathfrak{t})}\,\sum_{i=0}^{k-1} \alpha_i\, (-c_i)^{|\mathfrak{t}|} & \\
\quad + \frac{|\mathfrak{t}|}{\gamma(\mathfrak{t})}\,\sum_{i=0}^{k-1} (\beta_i + \mu_i)\, (-c_i)^{|\mathfrak{t}|-1}  \\
\quad + \frac{|\mathfrak{t}|\,(|\mathfrak{t}|-1)}{\gamma(\mathfrak{t})}\,\sum_{i=0}^{k-1} \nu_i\,(-c_i)^{|\mathfrak{t}|-2},
 &  \mathfrak{t} = [\mathfrak{u}_1], ~ |\mathfrak{u}_1| \ge 1, \\[6pt]
- \frac{1}{\gamma(\mathfrak{t})}\,\sum_{i=0}^{k-1} \alpha_i\, (-c_i)^{|\mathfrak{t}|} + \frac{|\mathfrak{t}|}{\gamma(\mathfrak{t})}\,\sum_{i=0}^{k-1} \beta_i\, (-c_i)^{|\mathfrak{t}|-1}, &  \mathfrak{t} = [\mathfrak{u}_1,\dots,\mathfrak{u}_L],~L\ge 2. 
\end{cases}
\end{equation}
To obtain the order conditions we equate the B-series coefficients of the numerical method \eqref{eqn:LIMM_gen-bseries} with those of the exact solution up to order $p$:
\[
\theta(\mathfrak{t}) = \frac{1}{\gamma(\mathfrak{t})},\quad \forall\, \mathfrak{t} \in \mathcal{T}_1 ~\textnormal{with}~ |\mathfrak{t}| \le p.
\]
For trees with $|\mathfrak{t}| \le 1$ we have:
\begin{equation}
\label{eqn:LIMM_gen-bseries-2}
\begin{split}
\mathfrak{t} = \emptyset:\quad & -\sum_{i=0}^{k-1} \alpha_i = 1 \quad \Leftrightarrow \quad \sum_{i=-1}^{k-1} \alpha_i = 0 \quad (\textnormal{using } \alpha_{-1}=1),  \\[3pt]
\mathfrak{t} = \tau_\circ:\quad & \sum_{i=-1}^k \mu_i = 0  \quad (\textnormal{using condition for } \mathfrak{t} = \emptyset),  \\[3pt]
\mathfrak{t} = \tau:\quad & - \sum_{i=0}^{k-1} \alpha_i\, (-c_i) + \sum_{i=0}^{k-1} \beta_i = 1.
\end{split}
\end{equation}
For trees with $|\mathfrak{t}| \ge 2$ with a multiply branched root ($\mathfrak{t} = [\mathfrak{u}_1,\dots,\mathfrak{u}_L]$, $L\ge 2$) the order condition reads:
\begin{equation}
\label{eqn:LIMM_gen-bseries-3}
 - \sum_{i=-1}^{k-1} \alpha_i\, (-c_i)^{|\mathfrak{t}|} + |\mathfrak{t}|\,\sum_{i=0}^{k-1} \beta_i\, (-c_i)^{|\mathfrak{t}|-1} = 0,
\end{equation}
where we used the relations $\alpha_{-1}=1$ and $c_{-1}=-1$.
For trees with $|\mathfrak{t}| \ge 2$ with a singly branched root ($\mathfrak{t} = [\mathfrak{u}_1]$, $|\mathfrak{u}_1| \ge 1$) the order condition reads:
\begin{equation}
\label{eqn:LIMM_gen-bseries-4}
\begin{split}
- \sum_{i=-1}^{k-1} \alpha_i\, (-c_i)^{|\mathfrak{t}|}  + |\mathfrak{t}|\,\sum_{i=-1}^{k-1} \left( (\beta_i + \mu_i)\, (-c_i)^{|\mathfrak{t}|-1} 
 + (|\mathfrak{t}|-1)\,\nu_i\,(-c_i)^{|\mathfrak{t}|-2} \right) = 0,
\end{split}
\end{equation}
where we formally set $\beta_{-1}=0$, and use the lower order condition $\theta(\mathfrak{u}_1) = 1/\gamma(\mathfrak{u}_1) = |\mathfrak{t}|/\gamma(\mathfrak{t})$. We make the following observations:
\begin{itemize}
\item T-trees of order two, $|\mathfrak{t}|=2$, have a singly branched root and only condition \eqref{eqn:LIMM_gen-bseries-4} is applied. 

\item Equations \eqref{eqn:LIMM_gen-bseries-3} and \eqref{eqn:LIMM_gen-bseries-4} depend only on the order of the tree $|\mathfrak{t}|$, but not on the tree topology. For higher orders $|\mathfrak{t}| \ge 3$ there are T-trees with both singly-branched and multiply-branched roots, therefore both order conditions  \eqref{eqn:LIMM_gen-bseries-3} and \eqref{eqn:LIMM_gen-bseries-4} apply. The only way both equations \eqref{eqn:LIMM_gen-bseries-3} and \eqref{eqn:LIMM_gen-bseries-4} are satisfied is to require that:
\begin{equation*}
\sum_{i=-1}^{k-1} \left( \mu_i\, (-c_i)^{|\mathfrak{t}|-1} 
 + (|\mathfrak{t}|-1)\,\nu_i\,(-c_i)^{|\mathfrak{t}|-2} \right) = 0 \quad \textnormal{for } |\mathfrak{t}| \ge 3.
\end{equation*}
\end{itemize}

Using $c_{-1}=-1$ \eqref{eqn:abscissae},  $\alpha_{-1}=1$, and $\beta_{-1}=0$, we have the following result.
\begin{theorem}[{\sc Limm} order conditions]
The {\sc Limm} method \eqref{eqn:LIMM_definition} has order of consistency $p \ge 1$ if and only if the coefficients satisfy:
\begin{subequations}
\label{eqn:LIMM_order-conditions}
\begin{align}
\label{eqn:LIMM_order-conditions-0-alpha}
\sum_{i=-1}^{k-1} \alpha_i & = 0, \quad \textnormal{(consistency)}\\
\label{eqn:LIMM_order-conditions-0-mu}
\sum_{i=-1}^k \mu_i &= 0, \quad \textnormal{(consistency)} \\
\label{eqn:LIMM_order-conditions-1}
\sum_{i=-1}^{k-1} \alpha_i\, c_i + \sum_{i=0}^{k-1} \beta_i &= 0, \quad \textnormal{(order one)} \\
\label{eqn:LIMM_order-conditions-2}
\sum_{i=-1}^{k-1} \alpha_i\, c_i^{2}  + 2\,\sum_{i=-1}^{k-1} \left( (\beta_i + \mu_i)\, c_i 
 - \nu_i \right) &= 0,  \quad \textnormal{(order two)} \\
\label{eqn:LIMM_order-conditions-traditional}
\sum_{i=-1}^{k-1} \alpha_i\, c_i^{\ell} + \ell\,\sum_{i=0}^{k-1} \beta_i\, c_i^{\ell-1} & = 0, \quad \textnormal{(order } \ell = 3, \dots, p\textnormal{)} \\
\label{eqn:LIMM_order-conditions-mu}
\sum_{i=-1}^{k-1} \mu_i\, c_i^{\ell-1} 
 - (\ell-1)\,\sum_{i=0}^{k-1} \nu_i\,c_i^{\ell-2} & = 0, \quad \textnormal{(order } \ell = 3, \dots, p\textnormal{)}.
\end{align}
\end{subequations}
\end{theorem}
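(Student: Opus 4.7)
The plan is to exploit the recursive B-series representation \eqref{eqn:LIMM_gen-bseries} already assembled above: by the standard B-series characterization \cite{Hairer_book_I}, the method has consistency order $p$ if and only if $\theta(\mathfrak{t}) = 1/\gamma(\mathfrak{t})$ for every $\mathfrak{t} \in \mathcal{T}_1$ with $|\mathfrak{t}| \le p$. It therefore suffices to walk through \eqref{eqn:LIMM_gen-bseries} class by class, equate with $1/\gamma(\mathfrak{t})$, and absorb the $i=-1$ contribution into the summations using the conventions $\alpha_{-1}=1$, $c_{-1}=-1$, and $\beta_{-1}=0$.

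First dispose of the base cases $|\mathfrak{t}| \le 1$: the tree $\emptyset$ yields \eqref{eqn:LIMM_order-conditions-0-alpha}; the distinguished tree $\tau_\circ$ yields \eqref{eqn:LIMM_order-conditions-0-mu} after using the $\emptyset$ condition; and the singleton $\tau$ yields \eqref{eqn:LIMM_order-conditions-1}. All three are direct substitutions in \eqref{eqn:LIMM_gen-bseries-2}. Next, for $|\mathfrak{t}| \ge 2$ there are two branches in \eqref{eqn:LIMM_gen-bseries}, a multiply-branched root case \eqref{eqn:LIMM_gen-bseries-3} and a singly-branched root case \eqref{eqn:LIMM_gen-bseries-4}. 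Since $[\tau]$ is the unique T-tree of order $2$ and has a singly-branched root, only \eqref{eqn:LIMM_gen-bseries-4} applies at that order, and after pulling the $i=-1$ term into the sum it delivers \eqref{eqn:LIMM_order-conditions-2}.

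The key structural observation is reserved for $|\mathfrak{t}| = \ell \ge 3$: T-trees of both singly- and multiply-branched root types exist at every such order (for instance $[\tau_{\ell-1}]$ versus the bushy tree $[\tau,\ldots,\tau]$), yet \eqref{eqn:LIMM_gen-bseries-3}--\eqref{eqn:LIMM_gen-bseries-4} depend only on $\ell$ and not on the tree topology. Both must therefore hold simultaneously. Equation \eqref{eqn:LIMM_gen-bseries-3} on its own is exactly the classical multistep order condition \eqref{eqn:LIMM_order-conditions-traditional}; subtracting it from \eqref{eqn:LIMM_gen-bseries-4} cancels the $\alpha$ and $\beta$ contributions and leaves precisely the $\mu,\nu$ relation \eqref{eqn:LIMM_order-conditions-mu}. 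The ``if'' direction follows because the stated conditions force $\theta(\mathfrak{t}) = 1/\gamma(\mathfrak{t})$ at every tree up to order $p$; the ``only if'' direction follows because each displayed condition was extracted from a specific coefficient-matching equation.

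The main obstacle I anticipate is index and sign bookkeeping rather than any deep issue: correctly incorporating the $i=-1$ term when $\alpha_{-1}=1$, $c_{-1}=-1$, $\beta_{-1}=0$, while $\mu_{-1}$ and $\nu_{-1}$ remain free parameters; handling the extra $\mu_{-1}\,\theta(\mathfrak{u}_1)$ contribution that appears in the singly-branched recursion by invoking the previously established lower-order identity $\theta(\mathfrak{u}_1)=1/\gamma(\mathfrak{u}_1)=|\mathfrak{t}|/\gamma(\mathfrak{t})$; and rigorously justifying that the ``depends only on $|\mathfrak{t}|$'' remark genuinely forces the split into the two independent families \eqref{eqn:LIMM_order-conditions-traditional} and \eqref{eqn:LIMM_order-conditions-mu} as soon as trees of both topologies coexist at order $\ell \ge 3$.
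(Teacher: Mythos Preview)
Your proposal is correct and follows essentially the same approach as the paper: both walk through the recursive B-series coefficients \eqref{eqn:LIMM_gen-bseries} class by class, match $\theta(\mathfrak{t})=1/\gamma(\mathfrak{t})$, handle $|\mathfrak{t}|\le 1$ directly, use the singly-branched case alone at order two, and for $\ell\ge 3$ split the simultaneous requirements \eqref{eqn:LIMM_gen-bseries-3}--\eqref{eqn:LIMM_gen-bseries-4} into the independent families \eqref{eqn:LIMM_order-conditions-traditional} and \eqref{eqn:LIMM_order-conditions-mu}. One small bookkeeping correction: the paper formally sets $\nu_{-1}=0$ (not free), which you should adopt when absorbing the $i=-1$ term in \eqref{eqn:LIMM_gen-bseries-4}.
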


\begin{remark}
A comparison of {\sc Limm} order conditions \eqref{eqn:LIMM_order-conditions} with {\sc Limm-w} order conditions \eqref{eqn:LIMW-order-conditions} reveals that they are the same, except for the second order condition. Specifically, {\sc Limm} order condition \eqref{eqn:LIMM_order-conditions-2} is the sum of {\sc Limm-w} conditions \eqref{eqn:LIMW-order-conditions-traditional} and \eqref{eqn:LIMW-order-conditions-mu} for $\ell=2$. There are $2p$ {\sc Limm} conditions for order $p$, compared to $2p+1$ {\sc Limm-w} conditions.
\end{remark}


\section{Linear Stability Analysis}
\label{sec:stability}

To study linear stability we apply the {\sc Limm} method \eqref{eqn:LIMM_definition} to the Dahlquist test equation
\[
y' = \lambda\, y, \quad y(t_0)=1,
\]
to obtain the numerical solution
\begin{equation}
\sum_{i=-1}^{k-1} \left(\alpha_i - z\, \left(\beta_i + \mu_i\right) - z^2\,\nu_i \right)\, y_{n-i} = 0, \quad  \text{where  } z = h\,\lambda,~ \beta_{-1} = \nu_{-1} =0.
\end{equation}
To solve this homogeneous linear difference equation we substitute $\zeta^{k-i-1}$ for $y_{n-i}$. This yields the relation:
\begin{equation}
\label{eqn:LIMM_stability}
\begin{split}
&\varrho(\zeta) - z\,\sigma(\zeta) - z^2\,\upsilon(\zeta) =  0, \qquad \textnormal{where:}\\
&\varrho(\zeta) \coloneqq \sum_{i=-1}^{k-1} \alpha_i \zeta^{k-i-1}, \quad
\sigma(\zeta) \coloneqq  \sum_{i=-1}^{k-1} \left(\beta_i + \mu_i\right) \zeta^{k-i-1}, \quad
\upsilon(\zeta) \coloneqq  \sum_{i=0}^{k-1} \nu_i\, \zeta^{k-i-1}.
\end{split}
\end{equation}
The linearly implicit multistep method \eqref{eqn:LIMM_definition} is zero-stable if all the roots of the polynomial $\varrho(\zeta)$ lie on or inside the unit circle, with only simple roots on the unit circle \cite[Section III, Def. 3.2]{Hairer_book_I}.

The stability region of the linearly implicit multistep method \eqref{eqn:LIMM_definition} is a subset of the complex plane defined as \cite[Section V, Def. 1.1]{Hairer_book_II}:
\begin{equation}
\label{eqn:stability-region}
\mathcal{S} \coloneqq \left\{z \in \mathbf{C}:\, \begin{array}{l}
  \text{all roots } \zeta_j(z) \text{ of eqn. \eqref{eqn:LIMM_stability} satisfy } \left|\zeta_j(z)\right| \leq 1, \\
  \text{and multiple roots satisfy } \left|\zeta_j(z)\right| < 1
  \end{array} \right\}.
\end{equation}
In order to visualize this stability region we make use of the reverse map
\begin{equation}
\label{eqn:mu}
z(\zeta,\bm{\alpha},\bm{\beta},\bm{\mu},\bm{\nu}) = \begin{cases} 
\frac{\varrho(\zeta)}{\sigma(\zeta)}, & \nu_0 = \cdots = \nu_{k-1} = 0, \\
\frac{-\sigma(\zeta) \pm \sqrt{\sigma(\zeta)^2 + 4\,\upsilon(\zeta)\,\varrho(\zeta)}}{2\,\upsilon(\zeta)}, & \textnormal{otherwise},
\end{cases}
\end{equation}
and evaluate it for $\zeta = e^{i\theta}$,  $0 \leq \theta \leq 2\pi$ to produce the root locus curve. 
Figure \ref{fig:limmwfstabreg} shows stability region plots for a set of methods constructed via the optimization approach described in Section \ref{sec:methodderiv}.

\begin{remark}[Variable step sizes]
\label{rem:variable-step-sizes}
For variable step size the method coefficients, and therefore the polynomials \eqref{eqn:LIMM_stability}, depend on the ratios $\omega_j$ of consecutive step sizes \eqref{eqn:step-size-ratios}  for the last $k$ steps: $\varrho(\omega_{i-k:i-1},\zeta)$, $\sigma(\omega_{i-k:i-1},\zeta)$,  $\upsilon(\omega_{i-k:i-1},\zeta)$. Consequently, the stability region \eqref{eqn:stability-region} and the inverse map \eqref{eqn:mu} also depend on the step size ratios.
\end{remark}

\begin{remark}[Stability matrix]
When all $\nu_i = 0$ the stability region \eqref{eqn:stability-region} is that of an implicit LMM method, and equals the region where the following matrix has all eigenvalues inside the unit disk, with only simple eigenvalues on the unit circle \cite[Section V]{Hairer_book_II}:
\begin{equation}
\label{eqn:limm-stability-matrix}
\mathbf{M}(\omega_{i-k:i-1},z) \coloneqq \begin{bmatrix} \frac{-\alpha_{0:k-2}^T+z\,(\beta_{0:k-2} + \mu_{0:k-2})^T}{1-z\mu_{-1}} &  \frac{-\alpha_{k-1}+z\,(\beta_{k-1} + \mu_{k-1})}{1-z\mu_{-1}} \\ \mathbf{I}_{(k-1)\times(k-1)} &  \boldsymbol{0}_{(k-1) \times 1} \end{bmatrix} \in \Re^{k \times k}.
\end{equation}
\end{remark}

\section{Convergence}
\label{sec:convergence}
%
Here we consider {\sc Limm-w} methods \eqref{eqn:LIMM_definition} with $\nu_i=0$, $i=0,\dots,k-1$, satisfying \eqref{eqn:LIMW-order-conditions} up to order $p$. We write the {\sc Limm-w} method \eqref{eqn:LIMM_definition} as an IMEX LMM scheme applied to a linear-nonlinear partitioning of the system:
\begin{equation}
\label{eqn:LIMM=IMEX}
\begin{split}
\sum_{i=-1}^{k-1} \alpha_i\, y_{n-i} & =  h_n\, \sum_{i=0}^{k-1} \beta_i\, (f_{n-i} - \J_n\,y_{n-i}) + h_n\, \sum_{i=-1}^{k-1} (\mu_i + \beta_i)\, \J_n\,y_{n-i}.
\end{split}
\end{equation}
One sees that the order conditions for the IMEX LMM scheme \eqref{eqn:LIMM=IMEX} are equivalent to the {\sc Limm-w} order conditions \eqref{eqn:LIMW-order-conditions}.

For brevity the following discussion considers the autonomous case, however it can immediately be extended to the non-autonomous case. Assume that, for any $\tau \in [t_0,t_F]$, there is an interval $[\tau - \varepsilon, \tau+\varepsilon]$ such that the function can be locally decomposed into a linear part and a nonlinear remainder:
\begin{equation}
\label{eqn:linear-nonlinear-split}
f(y(t)) = \J_\tau\,y(t) + \mathbf{r}_\tau(y(t)) \qquad \forall t \in (\tau - \varepsilon, \tau+\varepsilon),
\end{equation}
where the matrix $\J_\tau$ is diagonalizable, and all its eigenvalues have non-positive real parts.
$\J_\tau$ captures all the stiffness of the system in a vicinity of the exact trajectory, i.e., the remaining nonlinear parts $\mathbf{r}_\tau(y)$ are non-stiff, and Lipschitz-continuous with moderate Lipschitz constants in a vicinity of the exact solution:
\begin{equation}
\label{eqn:residual-lipschitz}
\Vert \mathbf{r}_\tau(y) - \mathbf{r}_\tau(z) \Vert \le \mathrm{L}_\tau\, \Vert y - z \Vert.
\end{equation}
Assumptions \eqref{eqn:linear-nonlinear-split}, \eqref{eqn:residual-lipschitz} mean that the stiffness is due to linear dynamics only, and that the stiff directions of the system evolution do not change too rapidly along a trajectory \cite{Sandu_2020_IMEX-GLM-Theory}.
Choose a finite number of $\tau_i$ values such that the corresponding intervals $(\tau_i-\varepsilon_i,\tau_i+\varepsilon_i)$ cover the entire compact integration time interval $[t_0,t_F]$. Consequently, we select a finite number of subintervals and on each we have the corresponding decomposition \eqref{eqn:linear-nonlinear-split}. Since we have a finite number of decompositions, without loss of generality, we will carry out the convergence analysis on a single subinterval and a single decomposition \eqref{eqn:linear-nonlinear-split}, with matrix $\J_\ast$; the subscripts $\tau$ will be dropped. We note that the Jacobian of the nonlinear remainder is $\mathbf{r}_{y}(y) = \J(y) - \J_\ast$, and, from \eqref{eqn:residual-lipschitz}, that $\Vert \mathbf{r}_{y}(y) \Vert \le  \mathrm{L}$ in a vicinity of the solution.

\begin{theorem}[Convergence]
\label{thm:convergence}
Apply an order $p$ {\sc Limm-w} scheme \eqref{eqn:LIMM=IMEX} to solve the system \eqref{eqn:ode}. Assume that the eigenvalues of $h\,\J_\ast$ have non-positive real parts, and fall inside the stability region \eqref{eqn:stability-region} (computed for constant step size coefficients) for any $h > 0$. Perform integration using a sequence of steps such that ratios of consecutive step sizes are uniformly bounded below and above $\omega_{\rm \min} \le \omega_j \le \omega_{\rm max}$. The bounds $\omega_{\rm \min} \le 1$ and $\omega_{\rm \max} \ge 1$ are chosen such as to ensure that the method is linearly stable \eqref{eqn:limm-stability-matrix} when applied to integrate the stiff component $\J_\ast y$:
\begin{equation}
\label{eqn:matrix-stability}
\textstyle
\Vert \prod_{i=\ell_1}^{\ell_2} \mathbf{M}(\omega_{i-k:i-1},h_i \J_\ast) \Vert \le C_{\mathbf{M}}\quad \forall h_i : \sum_{i=\ell_1}^{\ell_2} h_i \le t_F-t_0, ~ \forall \ell_2 \ge \ell_1.
\end{equation}
By possibly further restricting the bounds $\omega_{\rm \min}$, $\omega_{\rm max}$, we ensure that \eqref{eqn:step-size-ratios} leads to step size fractions $j - e \le c_j \le j +e$ with $0 \le e < 0.5$. (Note that such bounds always exist, since for uniform steps $\omega_j=1$ the stability equation \eqref{eqn:matrix-stability} holds, and $c_j=j$.) 

Under these assumptions the numerical solution converges with order $p$ to the exact solution for sequences of sufficiently small step sizes $h_j \le h_\ast$, where the upper bound $h_\ast$ is independent of the stiffness of the system.
\end{theorem}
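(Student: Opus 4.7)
The plan is to exploit the IMEX reinterpretation \eqref{eqn:LIMM=IMEX} of the {\sc Limm-w} scheme and to run a standard consistency-plus-stability argument in which the ``stiff'' part is driven by the locally-frozen matrix $\J_\ast$ from \eqref{eqn:linear-nonlinear-split}, while everything else (including the discrepancy $\J_n-\J_\ast$) is absorbed into a non-stiff, moderately Lipschitz remainder. Concretely, I would set $e_n \coloneqq y_n - y(t_n)$ and subtract the version of \eqref{eqn:LIMM=IMEX} satisfied by the exact solution (with residual $r_n$, known from the order conditions \eqref{eqn:LIMW-order-conditions} and the estimate \eqref{eqn:lte-residual} to obey $\Vert r_n \Vert \le C\, h_n^{p+1}$) from the one satisfied by the numerical solution. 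Using $f(y)-f(z) = \J_\ast(y-z) + (\mathbf{r}(y)-\mathbf{r}(z))$ from \eqref{eqn:linear-nonlinear-split} and the Lipschitz bound \eqref{eqn:residual-lipschitz} (which also yields $\Vert \J_n-\J_\ast\Vert \le \mathrm{L}$ locally), the recursion takes the form
\begin{equation*}
\sum_{i=-1}^{k-1} \alpha_i\, e_{n-i} \;=\; h_n\, \J_\ast \sum_{i=-1}^{k-1}(\mu_i+\beta_i)\, e_{n-i} \;+\; h_n\, \phi_n(e_{n+1},\dots,e_{n-k+1}) \;+\; r_n,
\end{equation*}
where $\phi_n$ is uniformly Lipschitz in its arguments with a constant depending only on $\mathrm{L}$ and on the method coefficients (themselves uniformly bounded by Remark \ref{rem:variable-step-dependency}), and in particular independent of the stiffness.

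Next, I would recast this scalar recursion in companion form on the state vector $E_n \coloneqq (e_n,\dots,e_{n-k+1})^T$, applying the resolvent $(\mathbf{I}-h_n\mu_{-1}\J_\ast)^{-1}$ to the implicit $e_{n+1}$ term. Because $\J_\ast$ is diagonalizable with eigenvalues of non-positive real part, this resolvent is uniformly bounded, so the recursion becomes
\begin{equation*}
E_{n+1} \;=\; \mathbf{M}(\omega_{n-k+1:n},\, h_n\J_\ast)\, E_n \;+\; h_n\, \widetilde{\phi}_n(E_{n},\dots,E_{n-k+1}) \;+\; \widetilde{r}_n,
\end{equation*}
with the same kind of stiffness-independent Lipschitz bound on $\widetilde{\phi}_n$ and $\Vert \widetilde{r}_n\Vert \le C\, h_n^{p+1}$. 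Iterating and invoking the uniform product bound \eqref{eqn:matrix-stability} on the stability matrices gives
\begin{equation*}
\Vert E_n\Vert \;\le\; C_{\mathbf{M}} \Vert E_0\Vert \;+\; C_{\mathbf{M}}\, C\, \sum_{j\le n} h_j^{p+1} \;+\; C_{\mathbf{M}}\, C_\phi \sum_{j\le n} h_j\, \max_{\ell\le j}\Vert E_\ell\Vert.
\end{equation*}
The second sum is $O(h_\ast^{\,p}(t_F-t_0))$, the first term is $O(h_\ast^{\,p})$ provided a $p$-th order starter is used, and a discrete Gronwall inequality applied to the third sum (whose kernel depends only on the non-stiff constant $\mathrm{L}$) closes the estimate to yield $\Vert E_n\Vert = O(h_\ast^{\,p})$, uniformly in stiffness, for all sufficiently small $h_\ast$.

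The main obstacle I anticipate is the rewriting step that produces the clean splitting between the frozen-stiff matrix $\J_\ast$ and the moderately Lipschitz perturbation $\widetilde{\phi}_n$. The linearization in \eqref{eqn:LIMM_definition} is performed around the \emph{current} numerical Jacobian $\J_n$, not around $\J_\ast$; one must therefore algebraically shuffle the terms $h_n(\J_n-\J_\ast)\sum (\mu_i+\beta_i) y_{n-i}$ into $\widetilde{\phi}_n$ without letting them couple to $e_{n+1}$ through anything other than the resolvent $(\mathbf{I}-h_n\mu_{-1}\J_\ast)^{-1}$, whose boundedness is what allows the stiffness-independent constants. A secondary technical point is that the stability bound \eqref{eqn:matrix-stability} is stated for arbitrary step-size sequences but for one fixed $\J_\ast$; extending the whole estimate to the finite cover of $[t_0,t_F]$ by neighborhoods with possibly different $\J_{\tau_i}$ then requires chaining the single-subinterval bound, which is routine once the single-subinterval result is in hand.
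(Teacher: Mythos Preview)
Your overall strategy is the same as the paper's: subtract exact from numerical in the IMEX form \eqref{eqn:LIMM=IMEX}, split $f=\J_\ast y+\mathbf{r}(y)$, write the error in companion form driven by $\mathbf{M}(\omega,h\J_\ast)$, invoke the uniform product bound \eqref{eqn:matrix-stability}, and close with a discrete Gronwall. The paper also uses that $(\mathbf{I}-h_n\mu_{-1}\J_\ast)^{-1}$ is uniformly bounded (it calls this $\mathbf{T}_n^{-1}$, $\Vert\mathbf{T}_n^{-1}\Vert\le C_{\mathbf{T}}$) and that $\Vert\J_n-\J_\ast\Vert=\Vert(\mathbf{r}_y)_n\Vert\le\mathrm{L}$.

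The one place where your write-up diverges is the ``main obstacle'' paragraph. You suggest arranging things so that $(\J_n-\J_\ast)$ does \emph{not} couple to $e_{n+1}$ except through the resolvent. That is not achievable: the scheme has $h_n\mu_{-1}\J_n\,y_{n+1}$, so after splitting $\J_n=\J_\ast+(\mathbf{r}_y)_n$ and applying $\mathbf{T}_n^{-1}$, the error equation still carries the implicit term $h_n\mu_{-1}\,\mathbf{T}_n^{-1}(\mathbf{r}_y)_n\,e_{n+1}$. The paper does not try to avoid it; it simply observes that this term has norm at most $h_n|\mu_{-1}|\,C_{\mathbf{T}}\,\mathrm{L}\,\Vert e_{n+1}\Vert \eqqcolon C_3\,\Vert e_{n+1}\Vert$ and moves it to the left, which is legitimate once $h_\ast<(C_{\mathbf{T}}\mathrm{L}|\mu_{-1}|)^{-1}$ so that $C_3<1$. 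This contraction step is precisely what produces the stiffness-independent threshold $h_\ast$ in the statement. In your companion formulation this means $\widetilde{\phi}_n$ should be allowed to depend on $E_{n+1}$ as well, with Lipschitz constant $O(\mathrm{L})$ in that argument; the rest of your Gronwall argument then goes through unchanged after dividing by $1-C_3$.
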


\begin{proof}

First, using Remark \ref{rem:variable-step-dependency} and the assumption on the step size ratio bounds, the method coefficients depend continuously on step size ratios and remain  uniformly bounded: $|\alpha_i(\omega_{n-k:n-1})|$, $|\beta_i(\omega_{n-k:n-1})|$, $|\mu_i(\omega_{n-k:n-1})| \le const$ for all  $\omega_{\rm \min} \le \omega_j \le \omega_{\rm max}$.
For notation brevity in the remaining part of the proof we omit the explicit dependency of method coefficients on step size ratios.

Since $\mu_{-1} > 0$   and all the eigenvalues of $\J_*$ have non-positive real parts the following matrix is uniformly bounded for any step size:
\[
\mathbf{T}_n \coloneqq \mathbf{I}_{N} - h_n\,\mu_{-1} \, \J_* , \quad
\Vert \mathbf{T}_n^{-1} \Vert \le C_{\mathbf{T}}~~\forall\, h_n. 
\]
Replace the exact solution into the method \eqref{eqn:LIMM=IMEX}, then subtract the numerical solution to obtain the following recurrence of global errors:
\begin{equation*}
\begin{split}
\Delta y_{n+1}  
&= h_n\,\boldsymbol{\delta}_n+ h_n\,\boldsymbol{\zeta}_n  + \boldsymbol{\theta}_n + \mathbf{T}_n^{-1}\,R_n, \\
\boldsymbol{\delta}_n &= \mathbf{T}_n^{-1}\, \sum_{i=0}^{k-1} \beta_i\, \Delta\mathbf{r}_n(y_{n-i}), 
\qquad \boldsymbol{\zeta}_n  = \mathbf{T}_n^{-1}\, (\mathbf{r}_y)_n\, \sum_{i=-1}^{k-1} \mu_i \,\Delta y_{n-i}, \\
\boldsymbol{\theta}_n &= \mathbf{T}_n^{-1}\,\sum_{i=0}^{k-1} \left(-\alpha_i + (\beta_i + \mu_i)\,h_n\J_\ast  \right)\, \Delta y_{n-i},
\end{split}
\end{equation*}
where the local truncation error is $R_n \sim \mathcal{O}(h_n^{p+1})$. Let $\mathbb{Y}_{n} = [ y_{n}^T \dots y_{n-k+1}^T ]^T$ be the solution history vector. Taking norms we have that:
\begin{equation*}
\begin{split}
& \Vert \boldsymbol{\delta}_n \Vert \le C_{\mathbf{T}}\, \mathrm{L} \, \sum_{i=0}^{k-1} |\beta_i| \, \max_{i=0,\dots,k-1}\Vert \Delta y_{n-i}\Vert  \le C_1\, \Vert \Delta\mathbb{Y}_{n} \Vert, \\
&\Vert \mathbf{T}_n^{-1}\,h_n\J_\ast  \Vert = \Vert (\mu_{-1})^{-1}\, \left(\mathbf{T}_n^{-1} -\mathbf{I} \right) \Vert \le |\mu_{-1}|^{-1}\, (1 + C_{\mathbf{T}}), \\
&\Vert \boldsymbol{\theta}_n \Vert \le \left( C_{\mathbf{T}}\,\sum_{i=0}^{k-1} |\alpha_i| + \frac{1 + C_{\mathbf{T}}}{|\mu_{-1}|}\,\sum_{i=0}^{k-1} |\beta_i + \mu_i| \right)\, \max_{i=0,\dots,k-1}\Vert \Delta y_{n-i}\Vert \le C_2\, \Vert \Delta\mathbb{Y}_{n} \Vert. 
\end{split}
\end{equation*}
Consider the step size bound $h_\ast$ that is independent of the stiffness of the system:
\[
h_\ast < \left( C_{\mathbf{T}}\,\mathrm{L}\,|\mu_{-1}| \right)^{-1} \quad  \Rightarrow \quad h_\ast \, C_{\mathbf{T}}\,\mathrm{L}\,|\mu_{-1}| = C_3 < 1.
\]
For any $h_n \le h_\ast$ the error in the solution is bounded by:
\begin{equation*}
\begin{split}
\Vert \Delta y_{n+1} \Vert &\le \frac{h_\ast\,C_1 + C_2}{1-C_3}\, \Vert \Delta\mathbb{Y}_{n} \Vert+ \frac{h_\ast\,C_{\mathbf{T}}\,\mathrm{L}}{1-C_3}\, \sum_{i=0}^{k-1} |\mu_i| \,\max_{i=0,\dots,k-1}\Vert \Delta y_{n-i}\Vert  +  \frac{C_{\mathbf{T}}\,\Vert R_n \Vert}{1-C_3} \\
& \Rightarrow  \quad \Vert \boldsymbol{\zeta}_n \Vert \le C_4 \, \Vert \Delta\mathbb{Y}_{n} \Vert + C_5\,\Vert R_n \Vert,
\end{split}
\end{equation*}
where all constants $C_1$ to $C_5$ are independent of the stiffness of the system.

%
The global error vector obeys the recurrence:
\begin{equation}
\label{eqn:solution-recurrence}
\begin{split}
& \Delta \mathbb{Y}_{n+1} =  \mathbf{M} (\omega_{n-k:n-1},h_n\J_\ast) \, \Delta\mathbb{Y}_{n} + \mathbf{e}_1 \otimes  \left( h_n\,(\boldsymbol{\delta}_n+\boldsymbol{\zeta}_n) +  \mathbf{T}_n^{-1}\,R_n \right) \\
&=   \prod_{i=k}^{n} \mathbf{M} (\omega_{i-k:i-1},h_i\J_\ast)  \, \Delta\mathbb{Y}_k  \\
&\quad +  \sum_{\ell=k+1}^{n} \prod_{i=n-\ell}^{n} \mathbf{M} (\omega_{i-k:i-1},h_i\J_\ast)  \cdot  \mathbf{e}_1 \otimes  \left( h_\ell\,(\boldsymbol{\delta}_\ell + \boldsymbol{\zeta}_\ell) +  \mathbf{T}_\ell^{-1}\,R_\ell \right), 
\end{split}
\end{equation}
where $e_1 \in \Re^k$ has the first entry equal to one and all other entries equal to zero, and $\otimes$ is the Kronecker product.
Taking norms leads to the following global error bounds:
\begin{eqnarray*}
\Vert \Delta\mathbb{Y}_{n+1} \Vert 
&\le & C_{\mathbf{M}}\, \Vert \Delta\mathbb{Y}_k \Vert + C_{\mathbf{M}}\,(C_1+C_4)\,\sum_{\ell=k+1}^{n} \left( h_{\rm max}\,\Vert \Delta \mathbb{Y}_{\ell} \Vert + \rho_\ell \right), ~~
\rho_\ell \sim \mathcal{O}\big( h_{\rm max}^{p+1} \big),
\end{eqnarray*}
where $h_{\rm max} \coloneqq \max_\ell h_\ell$.
Solving this recurrence inequality by standard techniques, and assuming the initial state is sufficiently accurate $\Delta\mathbb{Y}_k \sim \mathcal{O}(h_{\rm max}^{p})$, proves the result.
%
\end{proof}


\iflong
\section{Index-1 DAE Solution}
\label{sec:DAE}
Consider the index-1 differential-algebraic equation (DAE, \cite{Hairer_book_II})
\begin{equation}
\label{eqn:LIMM_dae-1}
y' = f(y,z), \quad 0 = g(y,z),   \quad \Rightarrow \quad z = G(y),
\end{equation}
where the sub-Jacobian $g_z$ is nonsingular and has a negative logarithmic norm in a neighborhood of the exact solution.
Here we consider {\sc Limm-w} methods \eqref{eqn:LIMM_definition} with $\nu_i=0$, $i=0,\dots,k-1$ satisfying \eqref{eqn:LIMW-order-conditions-0-mu} up to order $p$.
\iflong
\begin{equation}
\label{eqn:LIMM_spp}
\begin{split}
&\begin{bmatrix} \Id - h\, \mu_{-1}\, (f_{y})_{n} & - h\, \mu_{-1}\, (f_{z})_{n} \\  - h\, \varepsilon^{-1}\,\mu_{-1}\, (g_{y})_{n} & \Id - h\,\varepsilon^{-1}\, \mu_{-1}\, (g_{z})_{n} \end{bmatrix}\,
\begin{bmatrix} y_{n+1} \\ z_{n+1} \end{bmatrix} \\
& =
\begin{bmatrix} 
- \sum_{i=0}^{k-1} \alpha_i\, y_{n-i} + h\, \sum_{i=0}^{k-1} \beta_i\, f_{n-i} + h\, \sum_{i=0}^{k-1} \mu_i\, ((f_{y})_{n}\,y_{n-i} + (f_{z})_{n}\, z_{n-i}), \\
- \sum_{i=0}^{k-1} \alpha_i\, z_{n-i} + h\, \sum_{i=0}^{k-1} \beta_i\, \varepsilon^{-1}\,g_{n-i} + h\, \varepsilon^{-1}\, \sum_{i=0}^{k-1} \mu_i\, ((g_{y})_{n}\,y_{n-i} + (g_{z})_{n}\, z_{n-i})
\end{bmatrix}.
\end{split}
\end{equation}
We take the limit $\varepsilon \to 0$ to find the solution of the index-1 DAE.
\fi
Application to \eqref{eqn:LIMM_dae-1} gives:
\begin{subequations}
\label{eqn:LIMM_dae}
\begin{eqnarray}
\label{eqn:LIMM_dae-1_y}
\qquad 
 \sum_{i=-1}^{k-1} \alpha_i\, y_{n-i}  &=& h\, \sum_{i=0}^{k-1} \beta_i\, f_{n-i} + h\, \sum_{i=-1}^{k-1} \mu_i\, ((f_{y})_{n}\,y_{n-i} + (f_{z})_{n}\, z_{n-i}), \\
\label{eqn:LIMM_dae-1_z}
0 &=& \sum_{i=0}^{k-1} \beta_i\, g_{n-i} + \sum_{i=-1}^{k-1} \mu_i\, ((g_{y})_{n}\,y_{n-i} + (g_{z})_{n}\, z_{n-i}).
\end{eqnarray}
\end{subequations}
%
%
Substituting \eqref{eqn:LIMM_dae-1_z} into \eqref{eqn:LIMM_dae-1_y} gives the solution for $y$:
\begin{equation}
\label{eqn:LIMM_dae_y}
\begin{split}
\sum_{i=-1}^{k-1} \alpha_i\, y_{n-i} &= h\, \sum_{i=0}^{k-1} \beta_i\, (f_{n-i}- (f_{z}\,g_{z}^{-1})_n\,g_{n-i}) + h\,(f_{y}-f_{z}\,g_{z}^{-1}\,g_{y})_n \,\sum_{i=-1}^{k-1} \mu_i\, y_{n-i}.
\end{split}
\end{equation}
We make the assumption that one can fit smooth curves through $z_{n-i}$ and $y_{n-i}$ for $i=0,\dots,k$.  From \eqref{eqn:LIMM_dae-1_z} and the order conditions  
\eqref{eqn:LIMW-order-conditions-0-mu}--\eqref{eqn:LIMW-order-conditions-mu} we have $\sum_{i=0}^{k-1} \beta_i\, g_{n-i} \sim \mathcal{O}(h^p)$.
Equation \eqref{eqn:LIMM_dae_y} is the {\sc Limm} method applied to the reduced system, plus a perturbation $\mathcal{O}(h^{p+1})$. Consequently the slow variable has a local truncation error of order $p+1$. 
 
\begin{equation*}
\begin{split}
\delta z &= G(y)-z, \\
0=g(y,G(y)) &= g(y,z) + g_z(y,z)\,\delta z + \mathcal{O}(\|\delta z\|^2), \\
f(y,G(y)) &= f(y,z) + f_z(y,z)\,\delta z + \mathcal{O}(\|\delta z\|^2) = (f - f_z\,g_z^{-1}\,g)(y,z) + \mathcal{O}(\|\delta z\|^2).
\end{split}
\end{equation*}

From \eqref{eqn:LIMM_dae-1_z}:
\begin{equation}
\label{eqn:LIMM_dae_z}
\sum_{i=-1}^{k-1} (\beta_i + \mu_i)\, g_{n-i} =   \sum_{i=-1}^{k-1} \mu_i\, (  g_{n-i} -(g_{y})_{n}\,y_{n-i} - (g_{z})_{n}\, z_{n-i}).
\end{equation}
Let $\Delta y_{n} = y_{n}-y(t_{n})$ and $\Delta z_{n} = z_{n}-z(t_{n})$ be the global errors. Substituting the exact solutions in \eqref{eqn:LIMM_dae-1_z}, and subtracting from the numerical solution \eqref{eqn:LIMM_dae-1_z}, gives the following recurrence:
\begin{equation}
\label{eqn:LIMM_dae_deltaz}
\sum_{i=-1}^{k-1} (\beta_i + \mu_i)\, g_{n-i} =   \sum_{i=-1}^{k-1} \mu_i\, (  g_{n-i} - (g_{y})_{n}\,\Delta y_{n-i} - (g_{z})_{n}\, \Delta z_{n-i}) +\mathcal{O}(h^{p+1}). 
\end{equation}
Using $\Delta_{n-i} = \max( \Vert \Delta y_{n-i} \Vert, \Vert \Delta z_{n-i} \Vert )$ we have the relation:
\begin{equation*}
\begin{split}
0 &= g(z(t_{n-i}), y(t_{n-i})) = g(z_{n-i} - \Delta z_{n-i} , y_{n-i}- \Delta y_{n-i}) \\
&= g_{n-i} - (g_z)_{n-i}\, \Delta z_{n-i} - (g_y)_{n-i}\, \Delta y_{n-i} + \mathcal{O}(\Delta_{n-i}^2)  \\
&= g_{n-i} - \Bigl((g_z)_{n}+\mathcal{O}(h)\Bigr)\,\Delta z_{n-i} - \Bigl((g_y)_{n}+\mathcal{O}(h)\Bigr)\,\Delta y_{n-i} + \mathcal{O}(\Delta_{n-i}^2). \\
g_{n-i} &= g(z(t_{n-i})+ \Delta z_{n-i} , y(t_{n-i})+ \Delta y_{n-i})  \\
&= g_z(t_{n})\,(z(t_{n-i})-z(t_{n})+\Delta z_{n-i}) +  g_y(t_{n})\,(y(t_{n-i})-y(t_{n})+\Delta y_{n-i}) + \mathcal{O}(h\, \Delta_{n-i}) 
\end{split}
\end{equation*}
the global error recurrences \eqref{eqn:LIMM_dae_deltaz} become:
\begin{equation}
\label{eqn:LIMM_global_err}
\begin{split}
& \sum_{i=-1}^{k-1} \mu_i\, \Delta z_{n-i} = - (g_{z}^{-1}\,g_{y})_{n} \,\sum_{i=-1}^{k-1} \mu_i\, \Delta y_{n-i} - (g_{z}^{-1})_n\, \sum_{i=0}^{k-1} \beta_i\, g_{n-i} + \mathcal{O}(h^{p+1})  \\
& \sum_{i=-1}^{k-1} \Bigl( \mu_i + \beta_i + \mathcal{O}(h) \Bigr)\, \Bigl( \Delta z_{n-i} + (g_{z}^{-1}\,g_{y})_{n-1} \,\Delta y_{n-i}\Bigr) =  \mathcal{O}(h^{p+1}).  
\end{split}
\end{equation}


%
Since $\mu_i + \beta_i$ is a stable recurrence with roots strictly smaller than one, its perturbation remains stable for small enough step sizes. Consequently $\Delta z_{n-i} + (g_{z}^{-1}\,g_{y})_{n} \,\Delta y_{n-i} \sim \mathcal{O}(h^p)$ for all $n$. Therefore if $\Delta y_{n} \sim \mathcal{O}(h^p)$ then $\Delta y_{n} \sim \mathcal{O}(h^p)$ for all $n$.

\subsection{The GLM-ROS route}

In matrix form, a GLM-ROS method is represented as
\begin{equation}
\begin{split}
K &= h \, F \left( \mathbf{A} \otimes K  + \mathbf{U} \otimes \zeta^{[n-1]} \right) + h \, \J_n \, \boldsymbol{\Gamma} \otimes K + h \, \J_n \, \boldsymbol{\Psi} \otimes \zeta^{[n-1]}, \\
\zeta^{[n]} &= \mathbf{B} \otimes K + \mathbf{V} \otimes \zeta^{[n-1]}. 
\end{split}
\end{equation}

We write the LIMM method \eqref{eqn:LIMM_definition} as
\begin{equation}
\begin{split}
K_1 &\coloneqq y_{n+1}, \quad K_2 \coloneqq h\,f(t_{n+1},y_{n+1}), \\
K_1 &= - \sum_{i=0}^{k-1} \alpha_i\, y_{n-i} + h\, \sum_{i=0}^{k-1} \beta_i\, f_{n-i} + h\, \J_n\, \gamma_{-1}\, K_1 + h\, \J_n\, \sum_{i=0}^{k-1} \mu_i\, y_{n-i}, \\
 &= \sum_{i=1}^{k} (-\alpha_{i-1})\, \zeta^{[n-1]}_{i} + h\, \sum_{i=1}^{k} \beta_{i-1}\, \zeta^{[n-1]}_{i+k} + h\, \gamma_{-1}\, \J_n\, K_1
 + h\, \J_n\, \sum_{i=1}^{k} \mu_{i-1}\, \zeta^{[n-1]}_{i}, \\
K_2 &= h\,f\left( K_1\right), \\
\zeta^{[n]}_{1} &= y_{n+1} = K_1, \\
\zeta^{[n]}_{i} &= y_{n-i+2} = \zeta^{[n-1]}_{i-1}, \quad 2 \le i \le k \\
\zeta^{[n]}_{k+1} &= f_{n+1} = K_2 \\
\zeta^{[n]}_{k+i} &= f_{n-i+2} = \zeta^{[n-1]}_{k+i-1}, \quad 2 \le i \le k \\
\end{split}
\end{equation}
The Butcher tableau of a LIMM written as a GLM reads:
\[
\mathbf{A} = \begin{bmatrix} 0 & 0 \\ 1 & 0 \end{bmatrix}, \quad \boldsymbol{\Gamma} = \begin{bmatrix} 1 & 0 \\ 0 & 0 \end{bmatrix}, \quad
\widehat{\mathbf{A}} = \begin{bmatrix} 1 & 0 \\ 1 & 0 \end{bmatrix},
\]
\[
\boldsymbol{\Psi} = \begin{bmatrix} \boldsymbol{0}_{1 \times k} & \boldsymbol{0}_{1 \times k} \\ \boldsymbol{\mu}_{0:k-1}^T & \boldsymbol{\beta}_{0:k-1}^T \end{bmatrix},
\quad \mathbf{U} = 0,
\quad \mathbf{B} = [e_1,e_{k+1}],
\quad \mathbf{V} = \begin{bmatrix} \mathbf{S}_{k \times k} & \boldsymbol{0} \\  \boldsymbol{0}  & \mathbf{S}_{k \times k} \end{bmatrix}.
\]

\subsection{The IMEX route}
We write the {\sc Limm-w} method \eqref{eqn:LIMM_definition} as an IMEX LMM scheme applied to a linear-nonlinear partitioning of the system:
\begin{equation}
\label{eqn:LIMM=IMEX_idx1}
\begin{split}
\sum_{i=-1}^{k-1} \alpha_i\, y_{n-i} & =  h\, \sum_{i=0}^{k-1} \beta_i\, (f_{n-i} - \J_n\,y_{n-i}) + h\, \sum_{i=-1}^{k-1} (\mu_i + \beta_i)\, \J_n\,y_{n-i}.
\end{split}
\end{equation}
The IMEX LMM order conditions are equivalent to the {\sc limm-w} order conditions:
\begin{align}
 \sum_{i=-1}^{k-1} \alpha_i &= 0,  \\
\sum_{i=-1}^{k-1}  \alpha_i\,c_i^\ell + \ell\, \sum_{i=0}^{k-1} \beta_i\, c_i^{\ell-1} &= 0,  \qquad \ell = 1,\dots,p, \\
\sum_{i=-1}^{k-1}  \alpha_i\,c_i^\ell + \ell\, \sum_{i=-1}^{k-1} (\mu_i + \beta_i)\, c_i^{\ell-1} &= 0,  \qquad \ell = 1,\dots,p.
\end{align}

\begin{equation}
\begin{split}
\sum_{i=-1}^{k-1} \alpha_i\, y_{n-i} &= h\, \sum_{i=0}^{k-1} \beta_i\, (f_{n-i}-(f_{z}\,g_{z}^{-1})_{n}\,g_{n-i}-\J^R_n \, y_{n-i}) \\
&\quad + h\,\J^R_n \,\sum_{i=-1}^{k-1} (\beta_i+\mu_i)\, y_{n-i} \\
0 &= \sum_{i=0}^{k-1} \beta_i\, g_{n-i} + \sum_{i=-1}^{k-1} \mu_i\, ((g_{y})_{n}\,y_{n-i} + (g_{z})_{n}\, z_{n-i}).
\end{split}
\end{equation}

\fi

\section{Construction of Optimized {\sc Limm} Schemes}
\label{sec:methodderiv}

In order to construct practical {\sc Limm} schemes we seek to satisfy two requirements: have a small local truncation error and a large numerical stability region \eqref{eqn:stability-region}. For multistep methods, these two requirements are frequently at odds with each other; thus, it is vital that they are considered together when designing a method. To quantify the local truncation error of a method of order $p \ge 1$ we consider the (scaled) residuals of the $(p+1)$-st order conditions \eqref{eqn:LIMW-order-conditions-traditional} and \eqref{eqn:LIMW-order-conditions-mu}:
\begin{subequations}
\label{eqn:lerr_res}
\begin{align}
\label{eqn:lerr_res_a}
\rho_{p+1,a} &= \sum_{i=-1}^{k-1}  \alpha_i\,c_i^{p+1} + (p+1)\, \sum_{i=0}^{k-1} \beta_i\, c_i^{p}, \\
\label{eqn:lerr_res_b}
\rho_{p+1,b} &= (p+1)\,\sum_{i=-1}^{k-1} \mu_i\, c_i^{p}  - (p+1)\,p\, \sum_{i=0}^{k-1} \nu_i\, c_i^{p-1}. 
\end{align}
\end{subequations}
If $p=1$ and exact Jacobians are used then one only needs to consider the sum of the two residuals. 
To quantify stability observe that the {\sc Limm} method is $A(\phi)$ stable with a stability angle:
\begin{equation}
 \label{eqn:alphastab}
 \phi(\bm{\alpha},\bm{\beta},\bm{\mu},\bm{\nu}) = \argmin_{0 \leq\theta\leq 2\pi} \left|\arg\left(-z\left(e^{i\theta}, \bm{\alpha},\bm{\beta},\bm{\mu},\bm{\nu}\right)\right)\right|.
\end{equation}

We design practical linearly implicit multistep methods using a multiobjective genetic optimization algorithm to simultaneously maximize the $A(\phi)$ stability angle \eqref{eqn:alphastab}, and minimize the local error of the method.
Maximizing the stability angle $\phi$ is equivalent to minimizing the first objective function:
\begin{equation}
 \Phi_1\left(\bm{\alpha},\bm{\beta},\bm{\mu},\bm{\nu}\right) = \left( 1 + \phi(\bm{\alpha},\bm{\beta},\bm{\mu},\bm{\nu}) \right)^{-1}.
 \label{eqn:obj1}
\end{equation}
To maximize accuracy we minimize the second objective function:
\begin{equation}
 \Phi_2\left(\bm{\alpha},\bm{\beta},\bm{\mu},\bm{\nu}\right) = \rho_{p+1,a}^2 + \rho_{p+1,b}^2.
 \label{eqn:obj2}
\end{equation}
In order to simplify the search space, we choose to set $\nu_i = 0, \, i = 0, ..., k-1$. Then, we make use of the method order conditions to write the coefficients $\alpha_i$, $\beta_i$, and $\mu_i$ in terms of the $c_i$'s and a smaller subset of free parameters. Then, to get fixed stepsize coefficients, we substitute $c_i = i$ in the coefficient expressions and in the residuals $\rho_{p+1,a}$ and $\rho_{p+1,b}$ (we can later retrieve the variable stepsize coefficients by leaving the $c_i$'s and substituting only for the free parameters). We also make one additional simplification: by setting $\sigma(0) = 0$, we guarantee that as $z \rightarrow -\infty$, the stability function $\zeta(z) \rightarrow 0$ (which is a condition for L-stability \cite[Section IV.3]{Hairer_book_II}). These simplifications are all directly embedded in the computation of both \eqref{eqn:obj1} and  \eqref{eqn:obj2}.

Finally, the resulting method must also be zero-stable, so we apply a nonlinear inequality constraint to enforce that roots of $\varrho(\zeta)$, the first characteristic polynomial of the method, fall on or inside the unit circle depending on multiplicity.

Making use of Matlab's optimization toolbox, we use a genetic algorithm to produce a population of good candidate methods for each of orders 2-5. From the set of candidates we select a method of each order that has the appropriate balance between stability angle and error coefficient. To acquire exact coefficients, we rationalize the optimal free parameters and substitute into the original coefficient expressions. Fixed stepsize coefficients for the selected {\sc Limm-w} methods are presented in Table \ref{tbl:limmwfcoefs}, with error coefficients and stability angles compared with BDF in Table \ref{tbl:limmfstab}. Figure \ref{fig:limmwfstabreg} plots the stability regions of the selected {\sc Limm-w} methods in the complex plane.
\iflong
Coefficients for the fixed stepsize {\sc Limm} methods can be found in Table \ref{tbl:limmfcoefs}, and the corresponding stability regions in Figure \ref{fig:limmwfstabreg}.
\else
Coefficients for the fixed stepsize {\sc Limm} methods and the corresponding stability regions can be found in the supplementary material in Table \ref{tbl:limmfcoefs_sup} and Figure \ref{fig:limmfstabreg_sup}.
\fi
\ifnosup
The full variable stepsize coefficient expressions can be found in Appendices \ref{sec:limmvarcoeff} and \ref{sec:limmwvarcoeff}.
\else
The full variable stepsize coefficient expressions can be found in the supplementary material in Sections \ref{sec:limmvarcoeff} and \ref{sec:limmwvarcoeff}.
\fi

\iflong
\begin{figure}[h]
\centering
\subfloat[Order one method.]{
\includegraphics[width=0.3\textwidth]{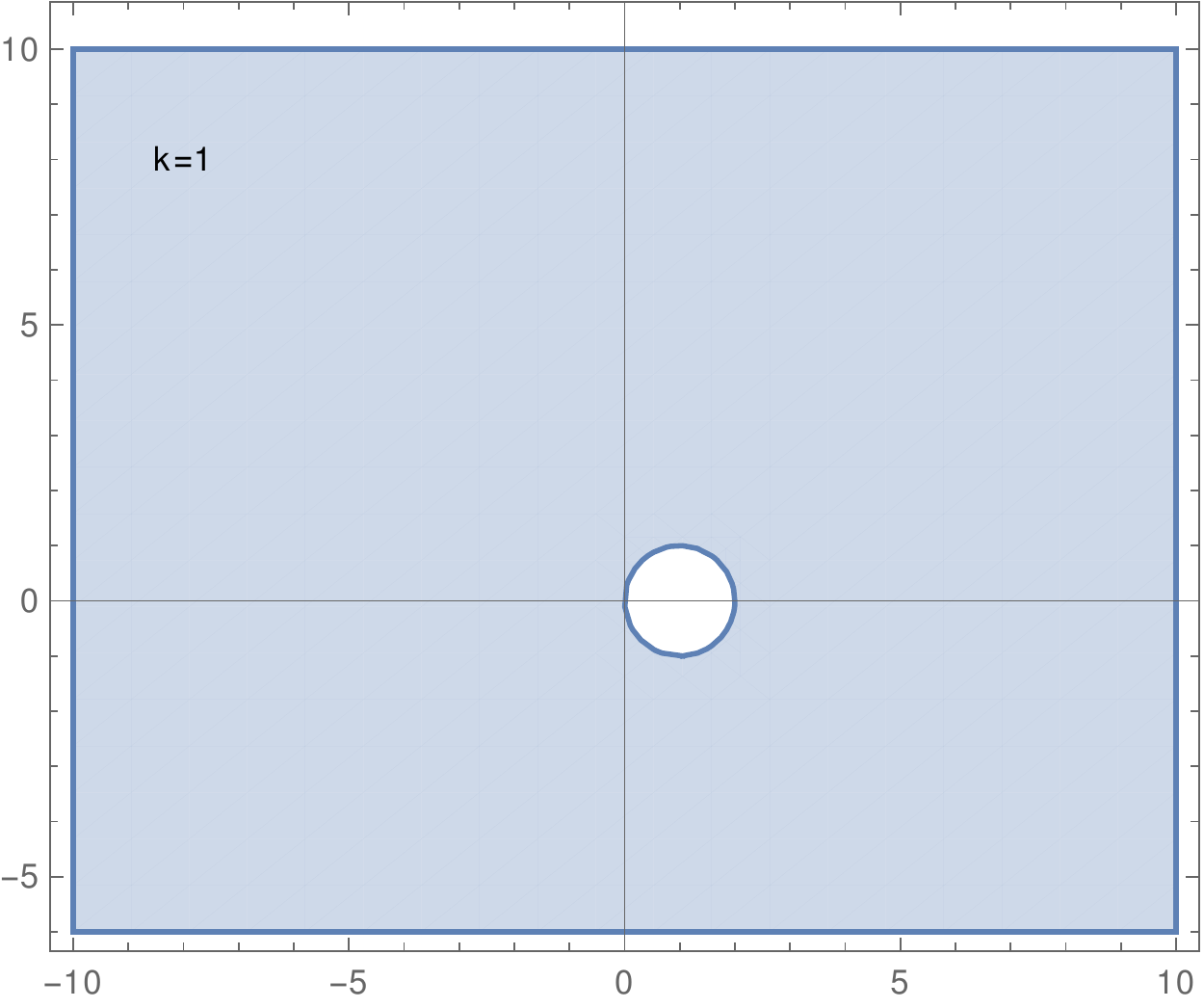}
\label{fig:limm1fstabreg}
}
\subfloat[Order two method.]{
\includegraphics[width=0.3\textwidth]{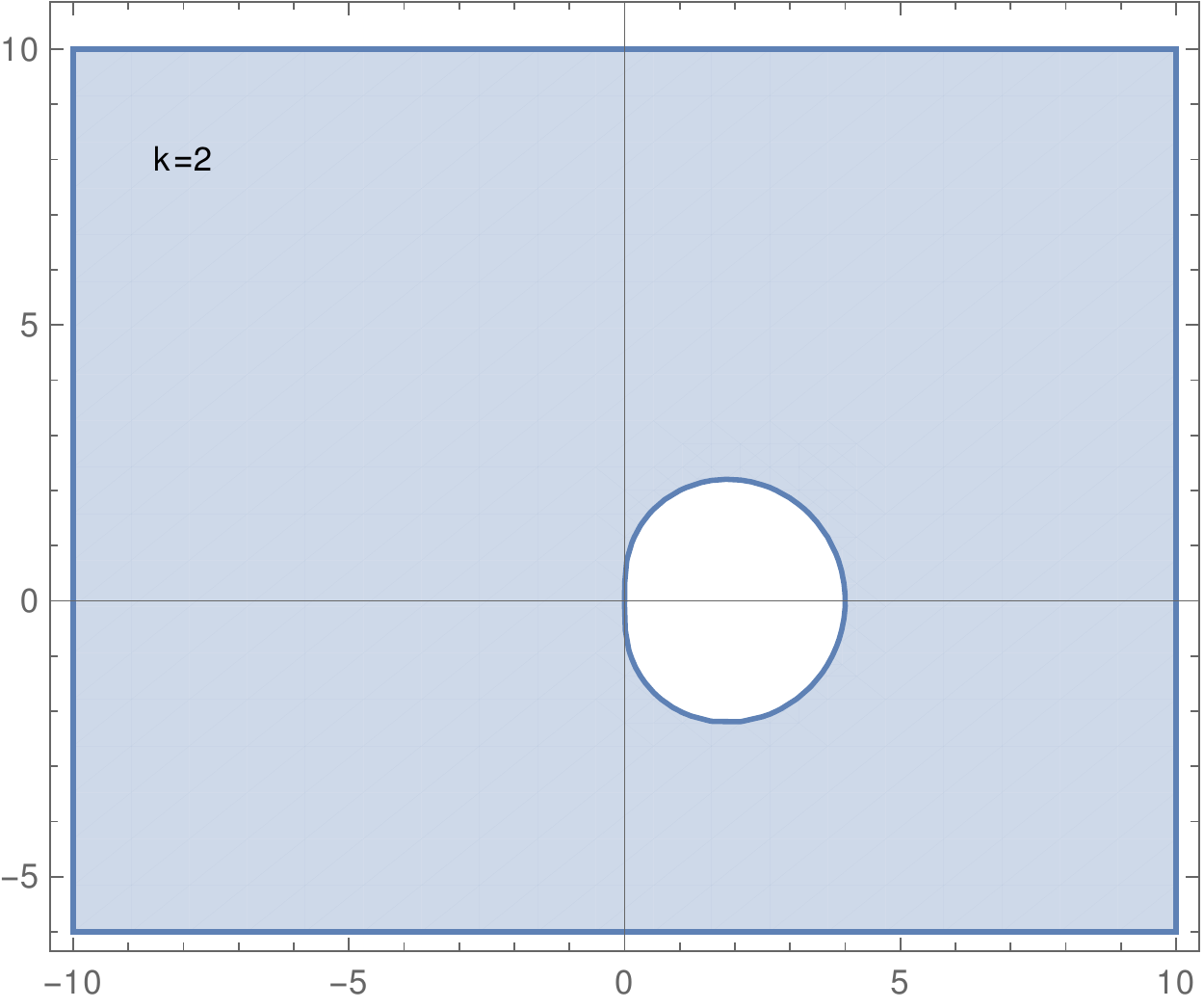}
\label{fig:limm2fstabreg}
}
\subfloat[Order three method.]{
\includegraphics[width=0.3\textwidth]{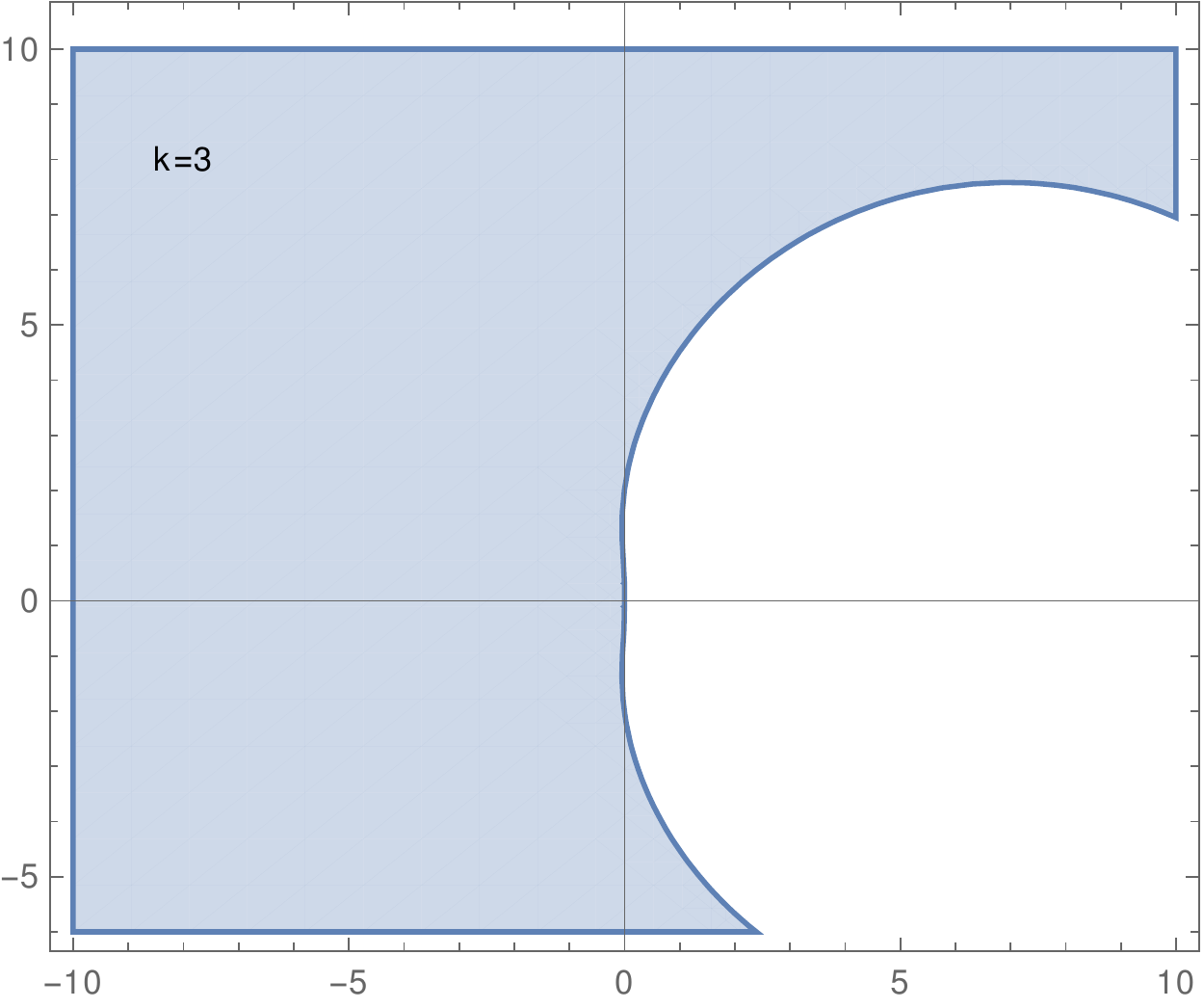}
\label{fig:limm3fstabreg}
}

\subfloat[Order four method.]{
\includegraphics[width=0.3\textwidth]{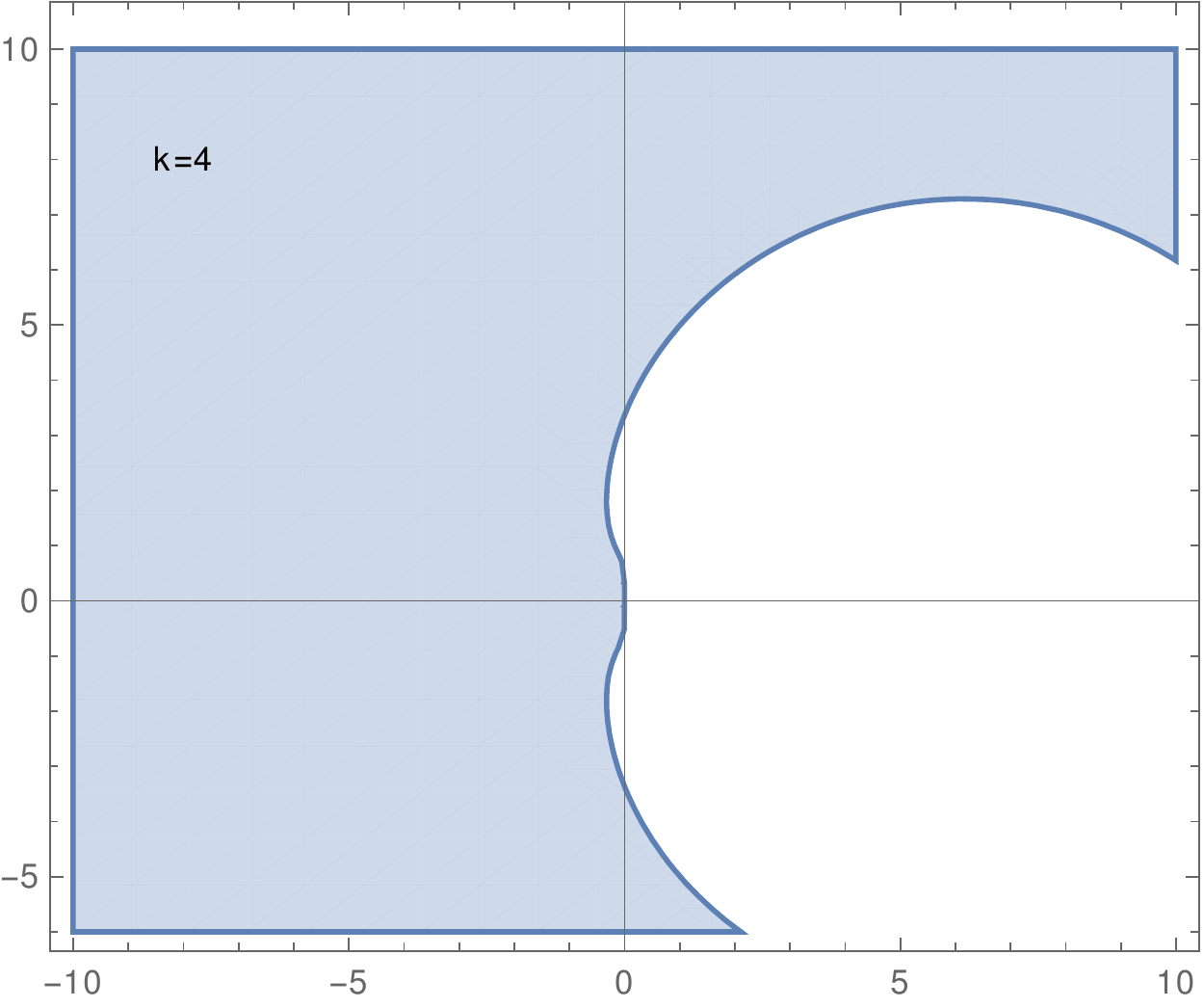}
\label{fig:limm4fstabreg}
}
\subfloat[Order five method.]{
\includegraphics[width=0.3\textwidth]{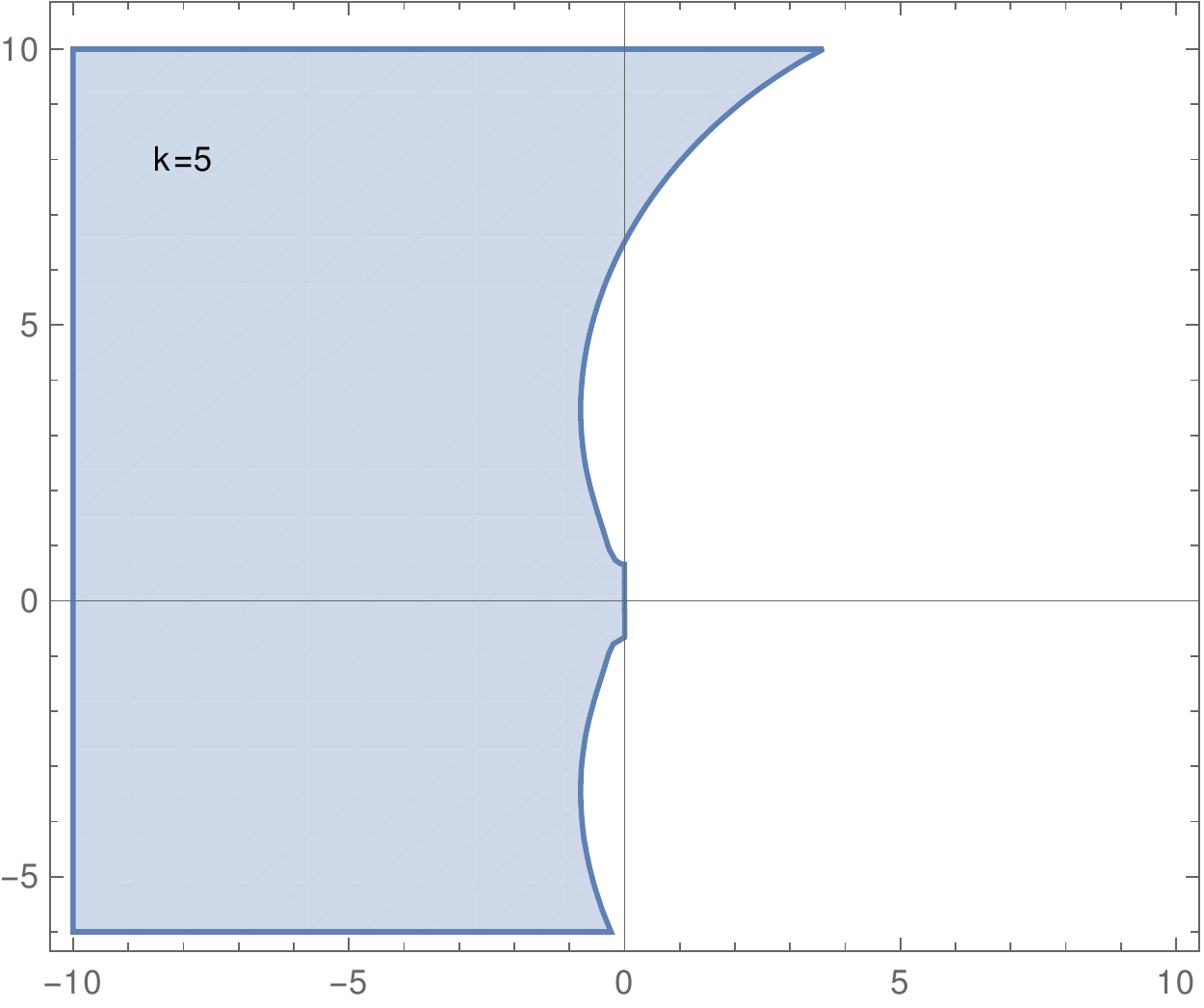}
\label{fig:limm5fstabreg}
}
\caption{Stability regions for fixed stepsize {\sc Limm} methods of orders one through five. Orders one and two are $A$-stable, and orders three to five are $A(\phi)$-stable with $\phi$'s listed in Table \ref{tbl:limmfstab}.}
\label{fig:limmfstabreg}
\end{figure}
\fi

\begin{figure}[h]
\centering
\subfloat[Order one method.]{
\includegraphics[width=0.3\textwidth]{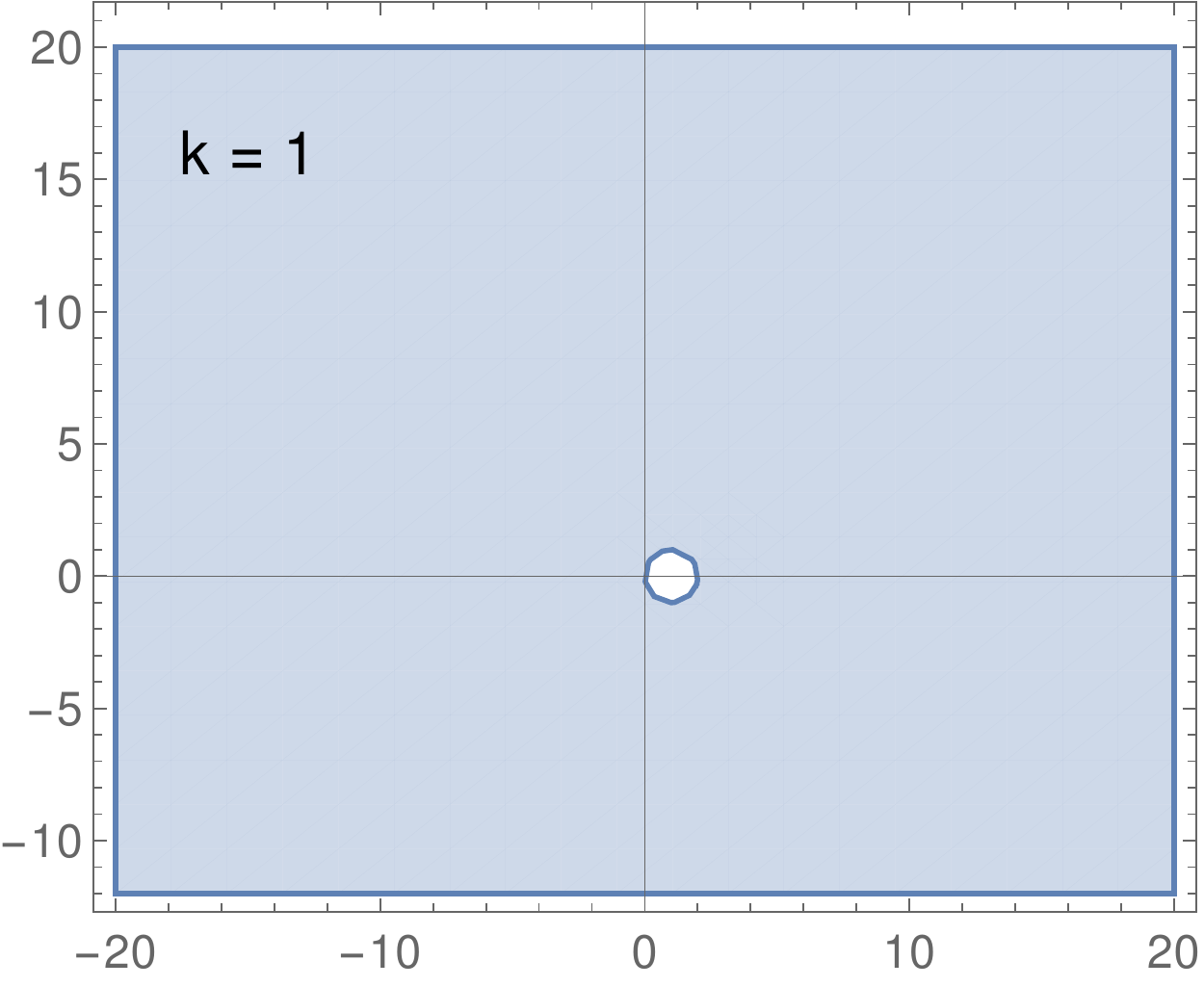}
\label{fig:limmw1fstabreg}
}
\subfloat[Order two method.]{
\includegraphics[width=0.3\textwidth]{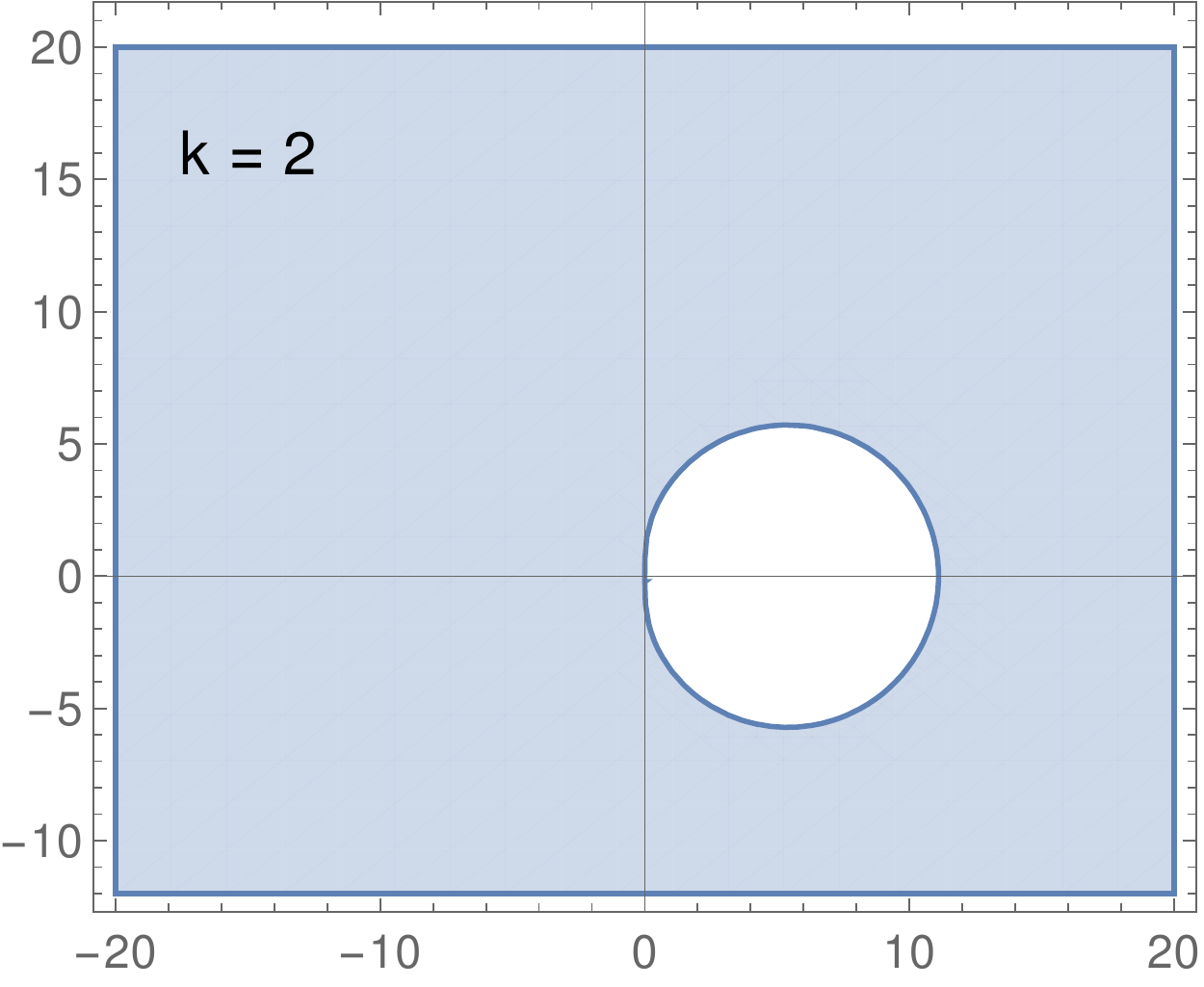}
\label{fig:limmw2fstabreg}
}
\subfloat[Order three method.]{
\includegraphics[width=0.3\textwidth]{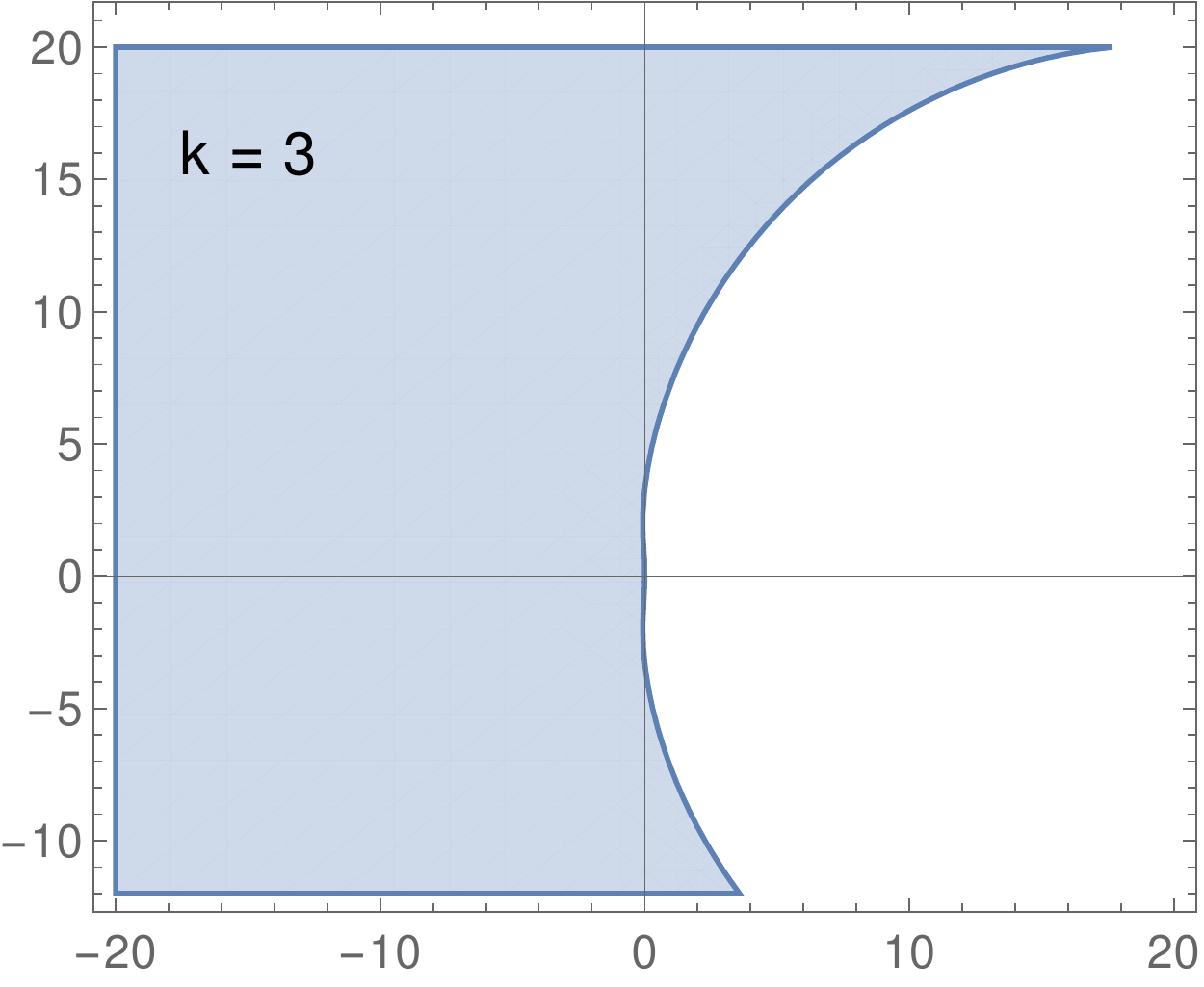}
\label{fig:limmw3fstabreg}
}

\subfloat[Order four method.]{
\includegraphics[width=0.3\textwidth]{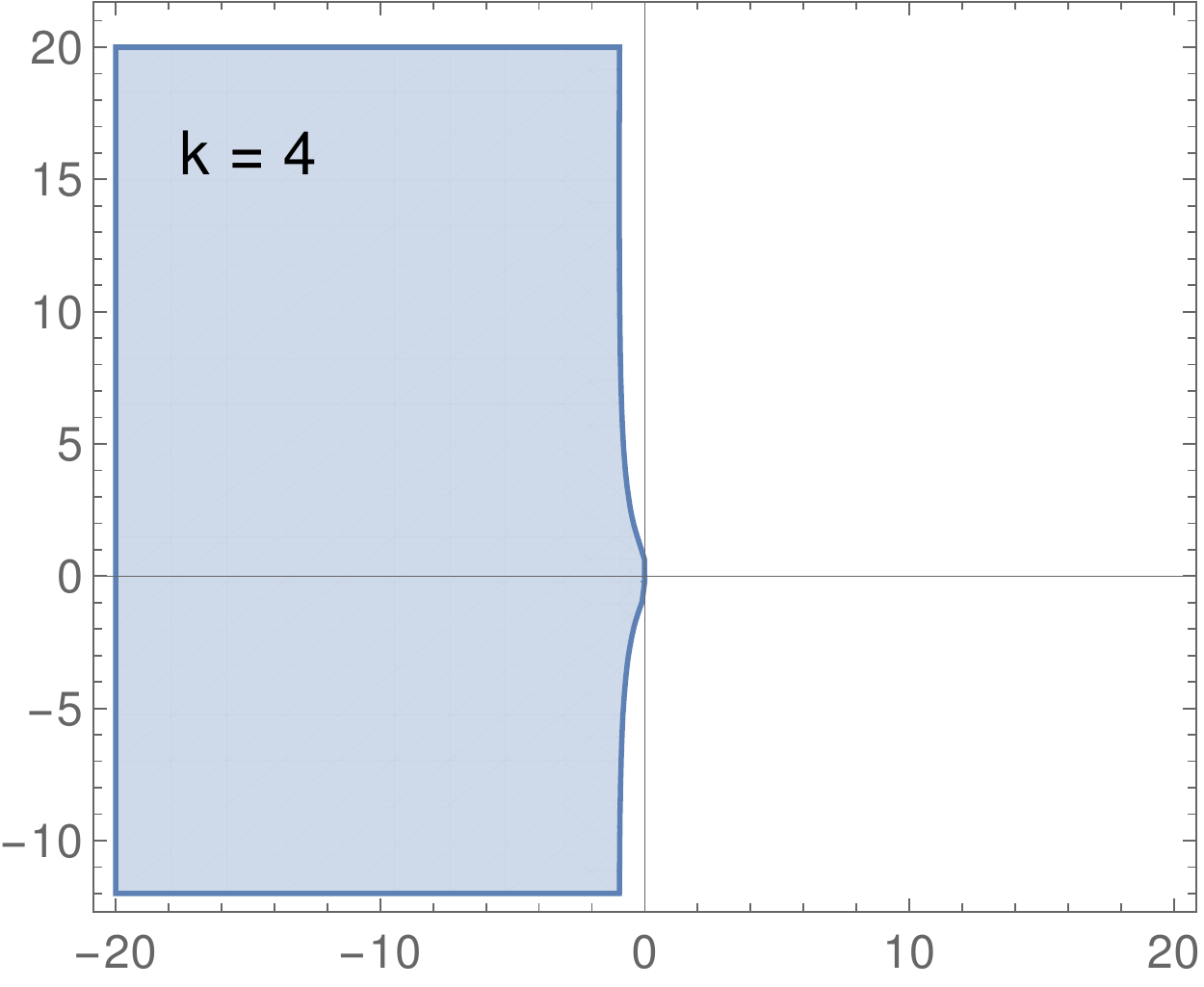}
\label{fig:limmw4fstabreg}
}
\subfloat[Order five method.]{
\includegraphics[width=0.3\textwidth]{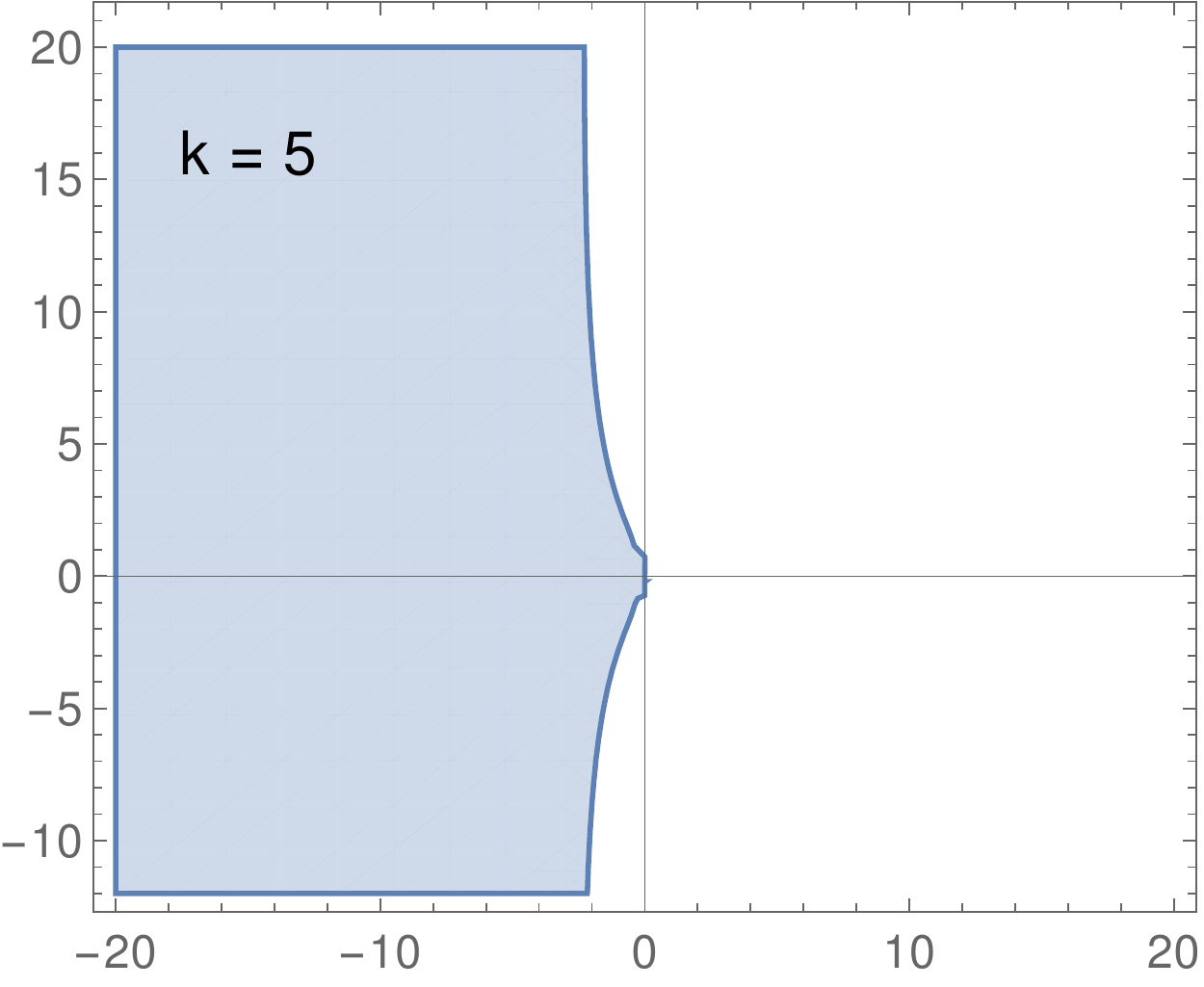}
\label{fig:limmw5fstabreg}
}
\caption{Stability regions for fixed stepsize {\sc Limm-w} methods of orders one through five. Orders one and two are $A$-stable, and orders three to five are $A(\phi)$-stable with $\phi$'s listed in Table \ref{tbl:limmfstab}.}
\label{fig:limmwfstabreg}
\end{figure}

\begin{table}[tbhp]
{\footnotesize
\caption{Characteristics of $k$-step fixed stepsize {\sc Limm}, {\sc Limm-w}, BDF, and explicit/implicit Adams methods for $k = 1,\dots,5$. Implicit Adams methods are of order $k+1$; all others are of order $k$. For {\sc Limm} and {\sc Limm-w}, stability angles are computed from \eqref{eqn:alphastab}, and error constants are computed as in \eqref{eqn:localerr-constant}.}
\begin{center}
$
\renewcommand{\arraystretch}{1.5}
\begin{array}{c|c|ccccc}
 & $k$ & 1 & 2 & 3 & 4 & 5 \\
\hline
\multirow{2}{*}{\text{explicit Adams}} & \text{A($\phi$)-stability angle} & \text{n/a} & \text{n/a} & \text{n/a} & \text{n/a} & \text{n/a} \\
 & \text{Error constant} & 0.5 & 0.416667 & 0.375 & 0.348611 & 0.329861 \\
\hline
\multirow{2}{*}{\text{implicit Adams}} & \text{A($\phi$)-stability angle} & 90. & \text{n/a} & \text{n/a} & \text{n/a} & \text{n/a} \\
 & \text{Error constant} & 0.083333 & 0.041666 & 0.026389 & 0.01875 & 0.014269 \\
 \hline
\multirow{2}{*}{\text{BDF}} & \text{A($\phi$)-stability angle} & 90. & 90. & 86.03 & 73.35 & 51.84 \\
 & \text{Error constant} & 0.5 & 0.333333 & 0.25 & 0.2 & 0.166667 \\
\hline
\multirow{2}{*}{\text{\sc Limm}} & \text{A($\phi$)-stability angle} & 90. & 90. & 87.7849 & 78.0742 & 72.9999 \\
 & \text{Error constant} & 0.5 & 0.222222 & 0.167344 & 0.204625 & 0.217405 \\
\hline
\multirow{2}{*}{\text{\sc Limm-w}} & \text{A($\phi$)-stability angle} & 90. & 90. & 87.3899 & 77.9101 & 70.3168 \\
 & \text{Error constant} & 0.5 & 0.424915 & 0.403238 & 0.380873 & 0.365325
\end{array}

$
\label{tbl:limmfstab}
\end{center}
}
\end{table}

\iflong
\begin{table}[h]
\centering
{\footnotesize
\caption{Exact coefficients for $k$-step fixed stepsize {\sc Limm} methods of order $k$, for $k=1 \dots 5$.}
\subfloat[{\sc Limm} 1-step order 1 coefficients.]{
\begin{minipage}{0.45\textwidth}
\centering
$
\renewcommand{\arraystretch}{1.5}
\begin{array}{c|cc}
 i & -1 & 0 \\
\hline
 \alpha_i & 1 & -1 \\
 \beta_i & 0 & 1 \\
 \mu_i & 1 & -1 \\
\end{array}
$
\end{minipage}
\label{tbl:limfcoef1}
}
\subfloat[{\sc Limm} 2-step order two coefficients.]{
\begin{minipage}{0.45\textwidth}
\centering
$
\renewcommand{\arraystretch}{1.5}
\begin{array}{c|ccc}
 i & -1 & 0 & 1 \\
\hline
 \alpha_i & 1 & -\frac{4}{3} & \frac{1}{3} \\
 \beta_i & 0 & \frac{2}{3} & 0 \\
 \mu_i & \frac{2}{3} & -\frac{2}{3} & 0 \\
\end{array}
$
\end{minipage}
\label{tbl:limfcoef2}
}

\subfloat[{\sc Limm} 3-step order three coefficients.]{
$
\renewcommand{\arraystretch}{1.5}
\begin{array}{c|cccc}
 i & -1 & 0 & 1 & 2 \\
\hline
 \alpha_i & 1 & -\frac{67569925}{40220258} & \frac{77233903}{99562899} & -\frac{383355371802341}{4004445485007942} \\
 \beta_i & 0 & \frac{6}{11} & -\frac{56091046951621340}{198220051507893129} & \frac{30378060674886581}{198220051507893129} \\
 \mu_i & \frac{3082752052157006}{6006668227511913} & -\frac{30378060674886581}{66073350502631043} & \frac{19781424978365126}{198220051507893129} & -\frac{30378060674886581}{198220051507893129} \\
\end{array}
$
\label{tbl:limfcoef3}
}

\subfloat[{\sc Limm} 4-step order 4 coefficients.]{
$
\renewcommand{\arraystretch}{1.5}
\begin{array}{c|cc}
 i & -1 & 0 \\
\hline
 \alpha_i & 1 & -\frac{60010656}{28439311} \\
 \beta_i & 0 & \frac{12}{25} \\
 \mu_i & \frac{6044411368232668137128215}{12447162528941684828131512} & -\frac{60023632933941523627586873}{103726354407847373567762600} \\
\hline
 i & 1 & 2 \\
\hline
 \alpha_i & \frac{71006953}{40099309} & -\frac{345107661}{454781887} \\
 \beta_i & -\frac{829829410576978812863115039}{1140989898486321109245388600} & \frac{133675753843217938307088979}{142623737310790138655673575} \\
 \mu_i & \frac{194551206099828504610038241}{285247474621580277311347150} & -\frac{2829520362862954765370488571}{3422969695458963327736165800} \\
\hline
 i & 3 & \text{} \\
\hline
 \alpha_i & \frac{50927106883029008210353}{518631772039236867838813} & \text{} \\
 \beta_i & -\frac{271157550073699750683379121}{1140989898486321109245388600} & \text{} \\
 \mu_i & \frac{271157550073699750683379121}{1140989898486321109245388600} & \text{} \\
\end{array}
$
\label{tbl:limfcoef4}
}

\subfloat[{\sc Limm-w} 5-step order 5 coefficients.]{
$
\renewcommand{\arraystretch}{1.5}
\begin{array}{c|cc}
 i & -1 & 0 \\
\hline
 \alpha_i & 1 & -\frac{104367911}{41202283} \\
 \beta_i & 0 & \frac{60}{137} \\
 \mu_i & \frac{322638273004961021870227746746423}{712722768713639590268860359964200} & -\frac{31175917409117421775097382197076197}{48821509656884311933416934657547700} \\
\hline
 i & 1 & 2 \\
\hline
 \alpha_i & \frac{59680231}{21017185} & -\frac{97736124}{57440479} \\
 \beta_i & -\frac{1740570722762351776400683674709186511}{1220537741422107798335423366438692500} & \frac{487813399545245689582675417708028617}{203422956903684633055903894406448750} \\
 \mu_i & \frac{1717451252646034545185780351980957211}{1220537741422107798335423366438692500} & -\frac{2669383545787015283771247804743841377}{1220537741422107798335423366438692500} \\
\hline
 i & 3 & 4 \\
\hline
 \alpha_i & \frac{19515650}{39801941} & -\frac{188732392210474496577705869057}{1979785468648998861857945444345} \\
 \beta_i & -\frac{25562879042079908014978668038159641}{21412942831966803479568830990152500} & \frac{157267484617875282653199076556264173}{610268870711053899167711683219346250} \\
 \mu_i & \frac{426670615738191742376152898428305157}{348725068977745085238692390411055000} & -\frac{157267484617875282653199076556264173}{610268870711053899167711683219346250} \\
\end{array}
$
\label{tbl:limfcoef5}
}
\label{tbl:limmfcoefs}
}
\end{table}
\fi

\begin{table}[h]
\centering
{\footnotesize
\caption{Exact coefficients for $k$-step fixed stepsize {\sc Limm-w} methods of order $k$, for $k=1 \dots 5$.}
\subfloat[{\sc Limm-w} 1-step order 1 coefficients.]{
\begin{minipage}{0.4\textwidth}
\centering
$
\renewcommand{\arraystretch}{1.5}
\begin{array}{c|cc}
 i & -1 & 0 \\
\hline
 \alpha_i & 1 & -1 \\
 \beta_i & 0 & 1 \\
 \mu_i & 1 & -1 \\
\end{array}

$
\end{minipage}
\label{tbl:limmwfcoef1}
}
\subfloat[{\sc Limm-w} 2-step order two coefficients.]{
\begin{minipage}{0.5\textwidth}
\centering
$
\renewcommand{\arraystretch}{1.5}
\begin{array}{c|ccc}
 i & -1 & 0 & 1 \\
\hline
 \alpha_i & 1 & -\frac{146619050}{133414177} & \frac{13204873}{133414177} \\
 \beta_i & 0 & \frac{193518829}{133414177} & -\frac{73309525}{133414177} \\
 \mu_i & \frac{73309525}{133414177} & -\frac{146619050}{133414177} & \frac{73309525}{133414177} \\
\end{array}

$
\end{minipage}
\label{tbl:limmwfcoef2}
}

\subfloat[{\sc Limm-w} 3-step order three coefficients.]{
$
\renewcommand{\arraystretch}{1.5}
\begin{array}{c|cccc}
 i & -1 & 0 & 1 & 2 \\
\hline
 \alpha_i & 1 & -\frac{192592391}{118869921} & \frac{41981416}{61945353} & -\frac{5229175002546}{90906657005273} \\
 \beta_i & 0 & \frac{16233524076078647}{9817918956569484} & -\frac{4193351041739980}{2454479739142371} & \frac{4833530710149845}{9817918956569484} \\
 \mu_i & \frac{4833530710149845}{9817918956569484} & -\frac{4833530710149845}{3272639652189828} & \frac{4833530710149845}{3272639652189828} & -\frac{4833530710149845}{9817918956569484} \\
\end{array}

$
\label{tbl:limmwfcoef3}
}

\subfloat[{\sc Limm-w} 4-step order 4 coefficients.]{
$
\renewcommand{\arraystretch}{1.5}
\begin{array}{c|cc}
 i & -1 & 0 \\
\hline
 \alpha_i & 1 & -\frac{68547635}{35752838} \\
 \beta_i & 0 & \frac{136586035293284691}{70863342514650928} \\
 \mu_i & \frac{719593273725529014067099}{1590107007076830596400464} & -\frac{719593273725529014067099}{397526751769207649100116} \\
\hline
 i & 1 & 2 \\
\hline
 \alpha_i & \frac{332147775}{246829693} & -\frac{120323842}{247754257} \\
 \beta_i & -\frac{4675749204985773774031537}{1590107007076830596400464} & \frac{3052167106160890365719135}{1590107007076830596400464} \\
 \mu_i & \frac{2158779821176587042201297}{795053503538415298200232} & -\frac{719593273725529014067099}{397526751769207649100116} \\
\hline
 i & 3 & \text{} \\
\hline
 \alpha_i & \frac{11382486133370227314625}{198763375884603824550058} & \text{} \\
 \beta_i & -\frac{719593273725529014067099}{1590107007076830596400464} & \text{} \\
 \mu_i & \frac{719593273725529014067099}{1590107007076830596400464} & \text{} \\
\end{array}

$
\label{tbl:limmwfcoef4}
}

\subfloat[{\sc Limm-w} 5-step order 5 coefficients.]{
$
\renewcommand{\arraystretch}{1.5}
\begin{array}{c|cc}
 i & -1 & 0 \\
\hline
 \alpha_i & 1 & -\frac{170476503}{75237041} \\
 \beta_i & 0 & \frac{3317715388830682274181888772466725}{1533160577078234002169550303186624} \\
 \mu_i & \frac{659152962863648794216719015147251}{1533160577078234002169550303186624} & -\frac{3295764814318243971083595075736255}{1533160577078234002169550303186624} \\
\hline
 i & 1 & 2 \\
\hline
 \alpha_i & \frac{124149029}{52265116} & -\frac{53697673}{39342191} \\
 \beta_i & -\frac{3387422206381293505203420155442595}{766580288539117001084775151593312} & \frac{294683351120793575703659865634035}{63881690711593083423731262632776} \\
 \mu_i & \frac{3295764814318243971083595075736255}{766580288539117001084775151593312} & -\frac{3295764814318243971083595075736255}{766580288539117001084775151593312} \\
\hline
 i & 3 & 4 \\
\hline
 \alpha_i & \frac{67073128}{206463953} & -\frac{2219582774479398588921363466455}{31940845355796541711865631316388} \\
 \beta_i & -\frac{1632980052046035774065588376123413}{766580288539117001084775151593312} & \frac{659152962863648794216719015147251}{1533160577078234002169550303186624} \\
 \mu_i & \frac{3295764814318243971083595075736255}{1533160577078234002169550303186624} & -\frac{659152962863648794216719015147251}{1533160577078234002169550303186624} \\
\end{array}

$
\label{tbl:limmwfcoef5}
}
\label{tbl:limmwfcoefs}
}
\end{table}

\section{Variable Stepsize and Variable Order Implementation}
\label{sec:impdetails}

In this section we discuss the details necessary to build an efficient self-starting variable stepsize and variable order implementation of {\sc Limm} methods. Following \cite[Chapter III.7]{Hairer_book_I}, a key component required for adapting stepsize and order is an estimate $e^{(k,h)}_{n+1}$ of the local truncation error when the solution  $y_{n+1}$ is computed with an order $k$ {\sc Limm} scheme and step size $h_n$. Based on this error estimate one can determine whether to accept or reject the current step, and can estimate the optimal stepsize for a method of order $k$ using: 
\begin{equation}
\label{eqn:hopt}
h_{\text{opt}}^{(k)} = h_n \, \Vert e^{(k,h)}_{n+1} \Vert^{-1/(k+1)}.
\end{equation}
To determine when a change of method order is needed, we further require the error estimates $e^{(k-1,h)}_{n+1}$ and $e^{(k+1,h)}_{n+1}$ for the order $k-1$ and $k+1$ methods, respectively. Then, we can select for the next step the method order which gives the best balance between a low estimated error and a large optimal timestep.


In traditional linear multistep methods \cite[Chapter III.7]{Hairer_book_I} a Taylor expansion reveals that
\begin{equation}
e^{(k,h)}_{n+1}  = C_k(\bm{c})\, h_n^{k+1}\, y^{(k+1)}(t_{n+1}) + \mathcal{O}\left(h_n^{k+2}\right),
\label{eqn:localerr}
\end{equation}
where $C_k(\bm{c})$ is the method error coefficient as a function of the stepsize ratios $\bm{c} = [c_i]_{i=-1}^{k}$ \eqref{eqn:abscissae}, derived from residuals on the $k+1$ order conditions. In the {\sc Limm} case different trees in the B-series expansion contribute differently to the $\mathcal{O}(h_n^{k+1})$ error term in \eqref{eqn:localerr}, as can be seen from \eqref{eqn:LIMW-order-conditions-traditional}--\eqref{eqn:LIMW-order-conditions-mu}. In order to build practical error estimators we will use an expansion of the form \eqref{eqn:localerr} with our error constant computed from the two residuals \eqref{eqn:lerr_res}:
\begin{equation}
\label{eqn:localerr-constant}
C_k(\bm{c}) = \frac{1}{(k+1)!}\max \left( \left|\rho_{k+1,a}(\bm{c})\right|,\,\left|\rho_{k+1,a}(\bm{c}) + \rho_{k+1,b}(\bm{c})\right| \right),
\end{equation}
where the sum of the residuals is used because, for {\sc Limm}, \eqref{eqn:lerr_res_b} always appears on trees alongside \eqref{eqn:lerr_res_a}, as described in the observations below \eqref{eqn:LIMM_gen-bseries-4}.
We use past solutions for $y_{n+1},\dots,y_{n-k}$ to approximate the $(k+1)$-st time derivative of the solution via divided differences:
\begin{equation}
\delta^{k+1} y\left[t_{n+1},\dots,t_{n-k}\right] = \frac{y^{(k+1)}(\xi)}{(k+1)!}, \quad \xi \in [t_{n-k},t_{n+1}].
\label{eqn:mvtdd}
\end{equation}
Putting together equations \eqref{eqn:localerr}, \eqref{eqn:localerr-constant}, and \eqref{eqn:mvtdd} we arrive at the following estimate for the local error.
\begin{definition}[Local error estimate]
\label{def:localerr}
For a $k$-step order $k$ {\sc Limm} method a practical local truncation error estimate is:
\begin{equation}
\label{eqn:error-estimator}
e^{(k,h)}_{n+1} = (k+1)!\, C_k(\bm{c})\, h_n^{k+1}\, \delta^{k+1} y\left[t_{n+1},\dots,t_{n-k}\right],
\end{equation}
using method error coefficient $C_k(\bm{c})$ \eqref{eqn:localerr-constant}, the current stepsize $h_n$, and the $(k+1)$-st order divided difference of the solution at $t_{n+1},\dots,t_{n-k}$. Due to  \eqref{eqn:localerr-constant}, \eqref{eqn:error-estimator} may slightly overestimate the local errors in the asymptotic regime.
\end{definition}
This error estimate is convenient for our purpose, as we can compute estimates for methods of order $k-1$ and $k+1$ by simply using their error coefficients and changing the order of the divided difference. Note that the divided difference of order $k+1$ requires $y_{n+1}$ and a history of $k+1$ values of $y_n,\dots,y_{n-k}$, whereas the $k$-step {\sc Limm} method only requires a history of $k$ values of $y$ in order to compute $y_{n+1}$. Divided differences for the higher order methods require storing additional past solution values.

Instead of building divided differences at each step from the history of solutions, it is more efficient to maintain a history of the divided differences and build new ones using the following formulas (similar to updating the Nordsieck vector in a variety of multistep codes \cite{byrne1975, brown1989, radhakrishnan1993}). First, after computing $y_{n+1}$ via the {\sc Limm} method, we produce the $(k+1)$-st divided difference as follows:
\begin{equation}
\label{eqn:ddifkp1}
\delta^{k+1} y\left[t_{n+1},\dots,t_{n-k}\right] = \frac{y_{n+1} - y_n}{\prod_{j=0}^{k} \sum_{l=0}^{j} (t_{n+1-l} - t_{n-l})} - \sum_{i=1}^{k} \frac{\delta^{i} y\left[t_{n},\dots,t_{n-i}\right]}{\prod_{j=i}^{k} \sum_{l=0}^{j} (t_{n+1-l} - t_{n-l})}.
\end{equation}
Then, through direct application of the recursive definition, one computes the updated values of order $k$ and $k+2$ divided differences.
\iflong
\begin{subequations}
\begin{equation}
\label{eqn:ddifk}
\delta^{k} y\left[t_{n+1},\dots,t_{n-k+1}\right] = \delta^{k} y\left[t_{n},\dots,t_{n-k}\right] + \sum_{j=0}^{k} h_{n-j}\, \delta^{k+1} y\left[t_{n+1},\dots,t_{n-k}\right],
\end{equation}
\begin{equation}
\label{eqn:ddifkp2}
\delta^{k+2} y\left[t_{n+1},\dots,t_{n-k-1}\right] = \frac{\delta^{k+1} y\left[t_{n+1},\dots,t_{n-k}\right] - \delta^{k+1} y\left[t_{n},\dots,t_{n-k-1}\right]}{\sum_{j=0}^{k+1} h_{n-j}}.
\end{equation}
Together, this requires saving divided differences of orders $0,\dots,k+2$ at each timestep (with the order 0 coming from the direct use of $y_n$ in \eqref{eqn:ddifkp1}, and order $k+2$ needed in the event we choose to use a higher order method for the next step). Order $1,\dots,k-1$ differences can be updated via repeated application of \eqref{eqn:ddifk}:
\begin{equation}
\delta^{i-1} y\left[t_{n+1},\dots,t_{n-i+2}\right] = \delta^{i-1} y\left[t_{n},\dots,t_{n-i+1}\right] + \sum_{j=0}^{i-1} h_{n-j} \delta^{i} y\left[t_{n+1},\dots,t_{n-i+1}\right].
\end{equation}
\end{subequations}
\fi

To avoid the redundant saving of both the history of solution values and of divided differences, one can reformulate the {\sc Limm} method \eqref{eqn:LIMM_definition} in terms of divided differences:
\begin{equation}
\begin{split}
&\left(\Id - h_n\, \mu_{-1}(\bm{c})\, \J_n\right) y_{n+1}= -\sum_{i=0}^{k-1} h_n^i\, \widehat{\alpha}_i(\bm{c})\, \delta^i y\left[t_n,\dots,t_{n-i}\right] \\ 
&\qquad + h\, \J_n\, \sum_{i=0}^{k-1} h_n^i \, \widehat{\mu}_i(\bm{c})\, \delta^i y\left[t_n,\dots,t_{n-i}\right]
+ h_n\, \sum_{i=0}^{k-1} h_n^i\, \widehat{\beta}_i(\bm{c}) \, \delta^i f\left[t_n,\dots,t_{n-i}\right],
\end{split}
\end{equation}
where $\delta^i f\left[t_n,\dots,t_{n-i}\right]$ are divided differences constructed from evaluations of the right-hand side function $f(t_n, y_n)$ evaluated at $(t_n, y_n)$, and $\widehat{\alpha}_i$, $\widehat{\mu}_i$, $\widehat{\beta}_i$ are transformed coefficients (as functions of the timestep ratios). The transformed coefficients can be computed from the original method coefficients as
\iflong
\begin{align}
\widehat{\alpha}_i(\bm{c}) & = (-1)^i \sum_{j=i}^{k-1} \alpha_j(\bm{c}) \prod_{l=1}^{i} \sum_{m=l}^{j} (c_m - c_{m-1}), \\
\widehat{\beta}_i(\bm{c}) & = (-1)^i \sum_{j=i}^{k-1} \beta_j(\bm{c}) \prod_{l=1}^{i} \sum_{m=l}^{j} (c_m - c_{m-1}), \\
\widehat{\mu}_i(\bm{c}) & = (-1)^i \sum_{j=i}^{k-1} \mu_j(\bm{c}) \prod_{l=1}^{i} \sum_{m=l}^{j} (c_m - c_{m-1}).
\end{align}
\else
\begin{equation}
\widehat{\alpha}_i(\bm{c}) = (-1)^i \sum_{j=i}^{k-1} \alpha_j(\bm{c}) \prod_{l=1}^{i} \sum_{m=l}^{j} (c_m - c_{m-1}).
\end{equation}
The $\widehat{\mu}_i$ and $\widehat{\beta}_i$ can be computed from the $\mu$'s and $\beta$'s in the same manner.
\fi

\begin{remark}[The first step]
The first step of integration requires additional care. Even starting with a one-step order one method requires an order two divided difference for error estimation (and a difference of order three for estimation of the order two method's error). We initialize our divided differences as follows:
\begin{align*}
\delta^0 y[t_0] & = y_0, \quad
\delta^1 y\left[t_0\right] \approx f_0.
\end{align*}
Then, following the computation of $y_1$, we approximate the next divided differences
\begin{align*}
\delta^2 y\left[t_1, t_0\right] & \approx \left((y_1 - y_0)/h_0  - f_0\right)/\left(2 h_0\right), \quad
\delta^3 y\left[t_1, t_0\right] \approx \left(\delta^2 y\left[t_1, t_0\right] - \J_0 f_0\right)/\left(3 h_0\right).
\end{align*}
\end{remark}
%

\begin{remark}[Stability for variable stepsize methods]
Evaluating the stability properties of variable stepsize multistep methods is challenging (see \cite{Calvo1987, Calvo1990, Calvo1993, Guglielmi2001} for analysis of variable stepsize BDF methods). 
Specifically, explicit formulas for the bounds $\omega_{\rm min}$ and $\omega_{\rm max}$ in Theorem \ref{thm:convergence} that ensure stability for a general multistep methods are not available. 
Since an in-depth analysis is outside the scope of the current work, we follow the best practices of other variable stepsize multistep implementations \cite[Section III.5]{Hairer_book_I}, by ensuring the base fixed stepsize method (from which the variable stepsize one is derived) is stable, and limiting stepsize increases to only occur after $k$ successful steps at the current stepsize.
\iflong 
Specifically, we allow step changes occur after only $k+1$ successful steps have been taken with the current stepsize.
\fi
\end{remark}

\section{Numerical Results}
\label{sec:results}

In this section we present numerical results for {\sc Limm} methods including both convergence and performance experiments. First, we demonstrate the convergence order of the fixed stepsize {\sc Limm} and {\sc Limm-w} methods, then we examine the performance of the self-starting variable stepsize and variable order implementation of the {\sc Limm} methods which was detailed in section \ref{sec:impdetails}.

The convergence tests use fixed stepsize implementations of {\sc Limm} and {\sc Limm-w}, with the built-in Matlab time integrator \texttt{ode15s} used as a starter method.  For the performance tests, our primary point of comparison is the BDF methods \cite[Section III.1]{Hairer_book_I}, one of the most widely used families of implicit multistep methods. We use a self-starting variable stepsize and order implementation of BDF built in the same framework as the {\sc Limm} implementation, using the same error controller and also the error estimator from definition \ref{def:localerr}. For comparison we include results for SDIRK4a \cite[Section IV.6]{Hairer_book_II} (a 5-stage, fourth order singly diagonally implicit Runge-Kutta method) and RODAS4 \cite[Section VI.4]{Hairer_book_II} (a 6-stage, fourth order Rosenbrock method) as examples of high-order single step implicit methods. The variable stepsize {\sc Limm-(w)} and BDF implementations, along with the SDIRK4a and RODAS4 methods can be found in our MatlODE package \cite{matlode}, in the \texttt{limm\_dev} branch. We also provide comparisons against the Matlab built-in \texttt{ode15s} in both BDF and NDF modes.

Each time integrator is tested by sweeping through the same series of tolerances from $10^{-2}, 10^{-3}, ..., 10^{-10}$, with equal relative and absolute tolerances. When GMRES is used for linear system solves, it uses a tolerance  that is one tenth of the integrator tolerance. All tests are performed in Matlab (version 2019a) on a workstation with dual Intel Xeon E5-2650 v3 processors with 20 cores (40 threads) and 128GB of memory. Test problems are drawn from the ODE Test Problems package \cite{roberts2019otp}, which contains Matlab implementations of a variety of ODEs and PDEs suitable for testing time integrators. Reference solutions are obtained by applying the Matlab built-in \texttt{ode15s} integrator when full Jacobians are available, or an implementation of RODAS4 using GMRES when only Jacobian-vector products are available; both are applied with the tightest possible tolerance of $100 $ times the machine precision.

\subsection{Convergence: Lorenz-96 ODE}

To test the convergence order of the fixed stepsize {\sc Limm} and {\sc Limm-w} methods, we use the Lorenz-96 problem \cite{lorenz96}, a system of nonlinear ODEs defined as
\begin{equation}
\label{eqn:Lorenz}
\frac{\text{d}x_i}{\text{d}t} = \left(x_{i+1} - x_{i-2}\right)x_{i-1} - x_i + F(t), \quad i = 1,\dots,N,
\end{equation}
with periodic boundary conditions $x_{N+1} = x_1$, $x_0 = x_N$, $x_{-1} = x_{N-1}$ and forcing function $F(t)= 8 + 4 \cos \left(3 \pi t\right)$. We select $N = 40$, and timespan $t \in \left[0, 0.5\right]$.

Figure \ref{fig:lorenz96conv} shows the convergence results for {\sc Limm} orders 1-5 as solid lines, and for {\sc Limm-w} orders 1-5 as dashed lines. The legend contains the slopes of the linear interpolants for each test, demonstrating that each method achieves its theoretical order. {\sc Limm-w} shows slightly smaller total errors.

\begin{figure}[h]
\centering
\includegraphics[trim=5 5 50 5, clip, width=0.7\textwidth]{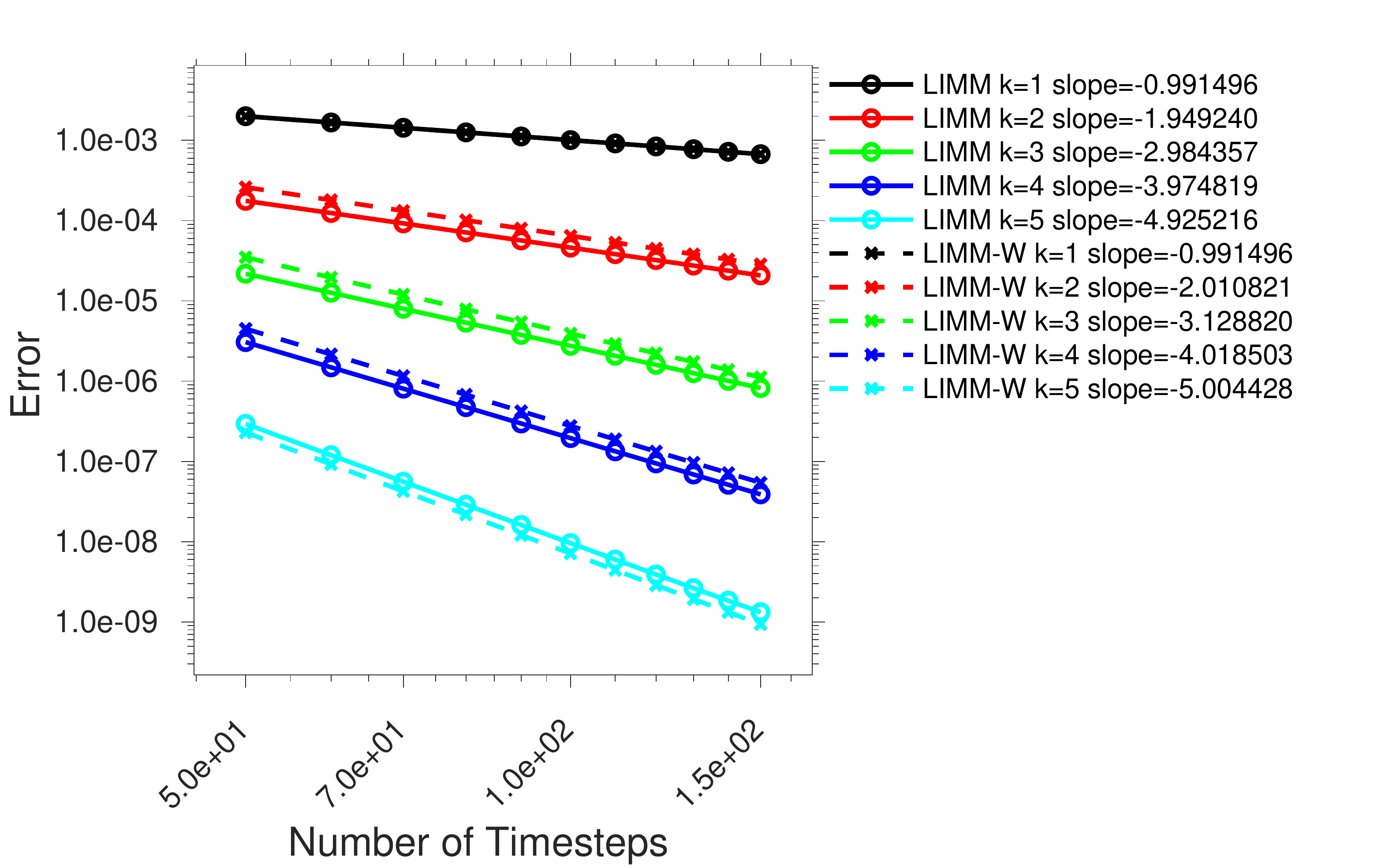}
\caption{Fixed stepsize results for the Lorenz-96 problem \eqref{eqn:Lorenz} showing the orders of convergence for {\sc Limm-w} (dashed lines) and {\sc Limm} (solid lines) methods for orders 1-5.}
\label{fig:lorenz96conv}
\end{figure}

\subsection{Performance: Gray--Scott reaction-diffusion model}

We run performance comparisons using the Gray--Scott reaction-diffusion model, which describes a two species chemical reactions with retirement \cite{Gray1983, Gray1984}:
\iflong
\begin{align*}
U + 2V & \rightarrow 3V \\
V & \rightarrow P.
\end{align*}
\fi
%
%
\begin{equation}
\label{eqn:GS}
\frac{\partial u}{\partial t}  = \varepsilon_1 \Delta u - u v^2 + F(1-u),\qquad
\frac{\partial v}{\partial t}  = \varepsilon_2 \Delta v + u v^2 - (F + k)v, 
\end{equation}
where $\varepsilon_1= 0.2$ and $\varepsilon_2= 0.1$ are diffusion rates, and $F= 0.04$ and $k= 0.06$ are reaction rates. We use a second order finite difference spatial discretization with periodic boundary conditions on a $128 \times 128$ 2D grid, which brings \eqref{eqn:GS} to ODE form \eqref{eqn:ode} with $N = 2\times 128 \times 128$. The simulation time interval is $t \in [0, 2]$.

Results on the Gray--Scott model are given in Figure \ref{fig:grayscottresults} for our implementations of {\sc Limm}, {\sc Limm-w}, BDF, SDIRK4a, RODAS4, as well as for the \texttt{ode15s} implementations of BDF and NDF \cite{shampine1997}. Because the full Jacobian is available, all methods use a direct $LU$-factorization, that is reused for methods requiring multiple linear solves per step. The error vs. timestep count results in Figure \ref{fig:grayscottsteps} reveal that all five  multistep integrators  achieve similar levels of error with similar numbers of timesteps, thus none of these multistep methods or implementations have a clear stability or error advantage over the others in this test. The one-step methods SDIRK4a and RODAS4 do show better errors with fewer timesteps, likely as a result of their greater flexibility in choosing timestep sizes.  The CPU time comparison in Figure \ref{fig:grayscotttimes} reveals the advantage of the built-in \texttt{ode15s} implementations, as they benefit from a massive amount of software optimizations. Comparing the results for our implementation of multistep integrators, we can see a small performance advantage for {\sc Limm} and {\sc Limm-w} over the similar BDF implementation, due to the need for only one linear system solve per timestep. Given a similar level of software optimization, we might expect the {\sc Limm} methods to hold the same advantage over the \texttt{ode15s} BDF implementation. The SDIRK4a and RODAS4 also show very good performance, due to the lower timestep count and reuse one $LU$-factorization per timestep.

\begin{figure}[h]
\centering
\subfloat[Variable stepsize convergence results.]{
\includegraphics[trim=5 5 50 5, clip, width=0.47\textwidth]{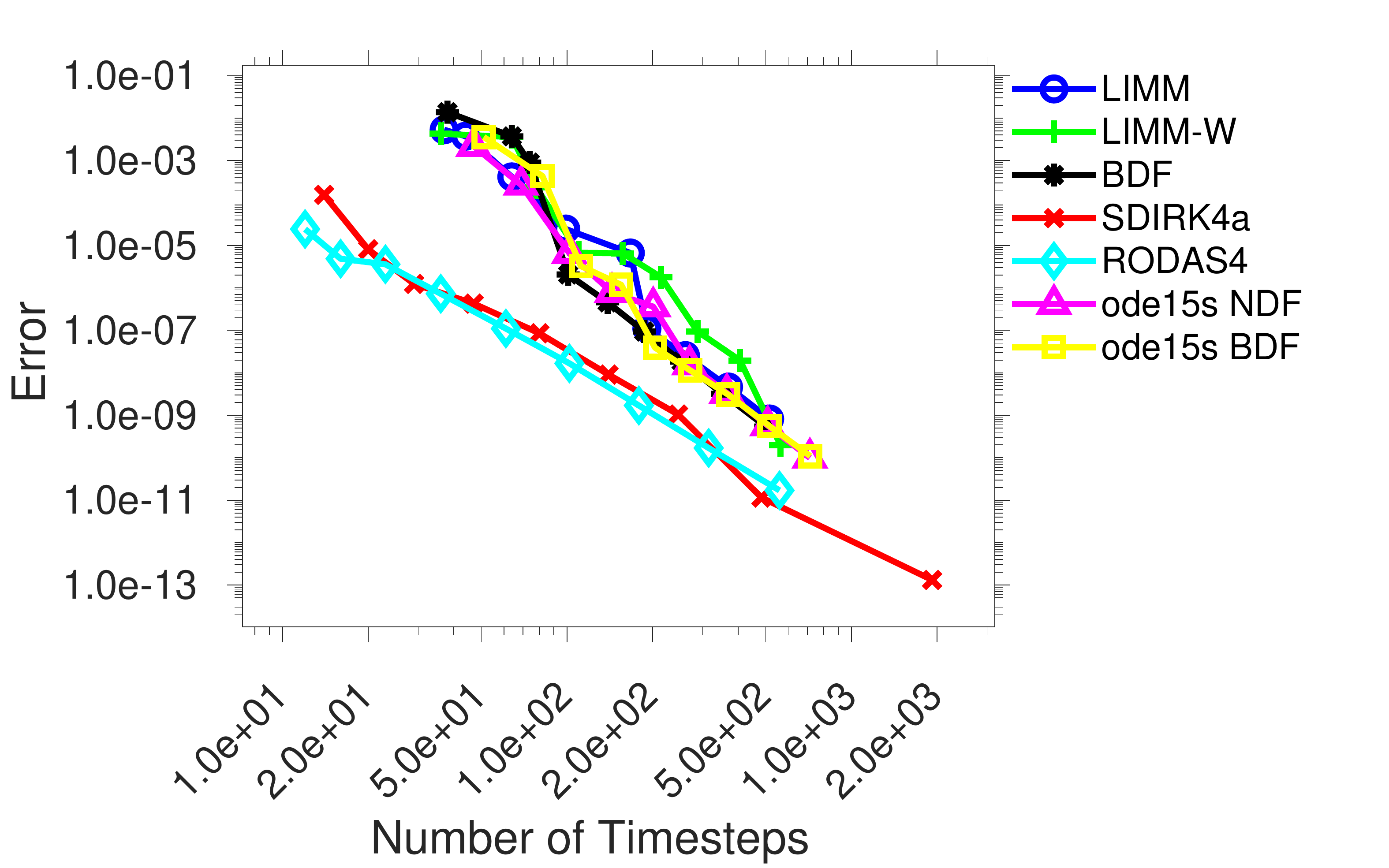}
\label{fig:grayscottsteps}
}
\subfloat[Work-precision diagram.]{
\includegraphics[trim=5 5 50 5, clip, width=0.47\textwidth]{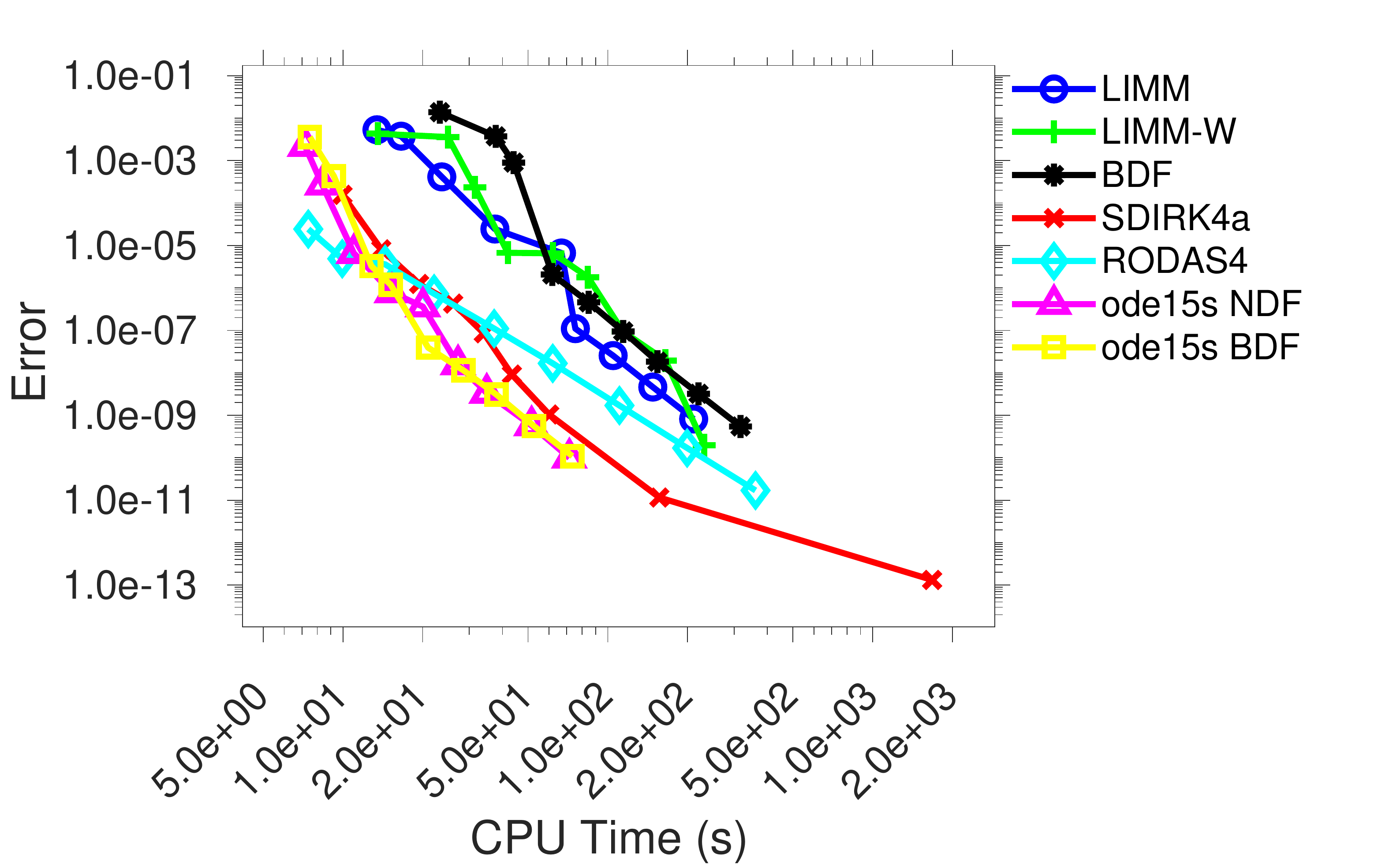}
\label{fig:grayscotttimes}
}
\caption{Performance results for different integrators applied to the 2D Gray-Scott reaction-diffusion problem \eqref{eqn:GS}.}
\label{fig:grayscottresults}
\end{figure}

\subsection{Performance: quasi-geostrophic model}

Next, we compare method performance using the 1.5-layer quasi-geostrophic (QG) model \cite{Sakov2007}, which provides a simplified representation of ocean dynamics:
\begin{align}
\label{eqn:QG}
  \displaystyle\frac{\partial q}{\partial t} & = -\psi_x - \varepsilon J\left(\psi,q\right) - A\Delta^3\psi + 2\pi \text{sin}\left(2\pi y\right), \qquad
  q = \Delta \psi - F\psi,
\end{align}
where $J\left(\psi,q\right) = \psi_x q_y - \psi_y q_x$, and $F = 1600$, $\varepsilon = 10^{-5}$ and $A = 10^{-5}$ are constants.  The implementation discretizes the system in terms of the stream function $\psi$, on the spatial domain $(x,y) \in [0,1]^2$, using second order central finite differences and homogeneous Dirichlet boundary conditions on a $127 \times 127$ grid. Integration is performed over the time span $t \in \left[0, 0.01\right]$. Due to the required solution of a Helmholtz equation, a portion of the QG Jacobian is dense; only Jacobian-vector products are available and all integrators use GMRES as linear solver, except for \texttt{ode15s} which cannot use iterative linear solvers. Instead, both BDF and NDF results for \texttt{ode15s} make use of a numerical Jacobian approximation.

Figure \ref{fig:qgresults} shows errors versus timestep count and CPU times for {\sc Limm}, {\sc Limm-w}, BDF, SDIRK4a, RODAS4, and \texttt{ode15s} applied to the QG model. First, we notice the jagged behavior of both the {\sc Limm} and BDF methods, especially at looser tolerances; this is an artifact of the variable order methods, where methods of lower order are preferred for their better stability. When the tolerances are tightened, the graphs smoothen, as the algorithm prefers the better asymptotic errors of the higher order methods. The {\sc Limm} and {\sc Limm-w} methods are particularly sensitive to this, possibly due to the use of GMRES, as inexact linear solves could degrade stability or lead to {\sc Limm} order reduction. Despite this, Figure \ref{fig:qgtimes} shows that {\sc Limm} retains its CPU time advantage over BDF, and also matches or outperforms RODAS4 and SDIRK4a for similar levels of error, as the small stability advantages that BDF, SDIRK and Rosenbrock methods have cannot overcome their larger cost per timestep. The built-in \texttt{ode15s} methods both provide very poor performance for this test case, as they do not support the use of direct Jacobian-vector products and are forced to build numerical Jacobian approximations to use with direct solves.

\begin{figure}[h]
\centering
\subfloat[Variable stepsize convergence results.]{
\includegraphics[trim=5 5 50 5, clip, width=0.47\textwidth]{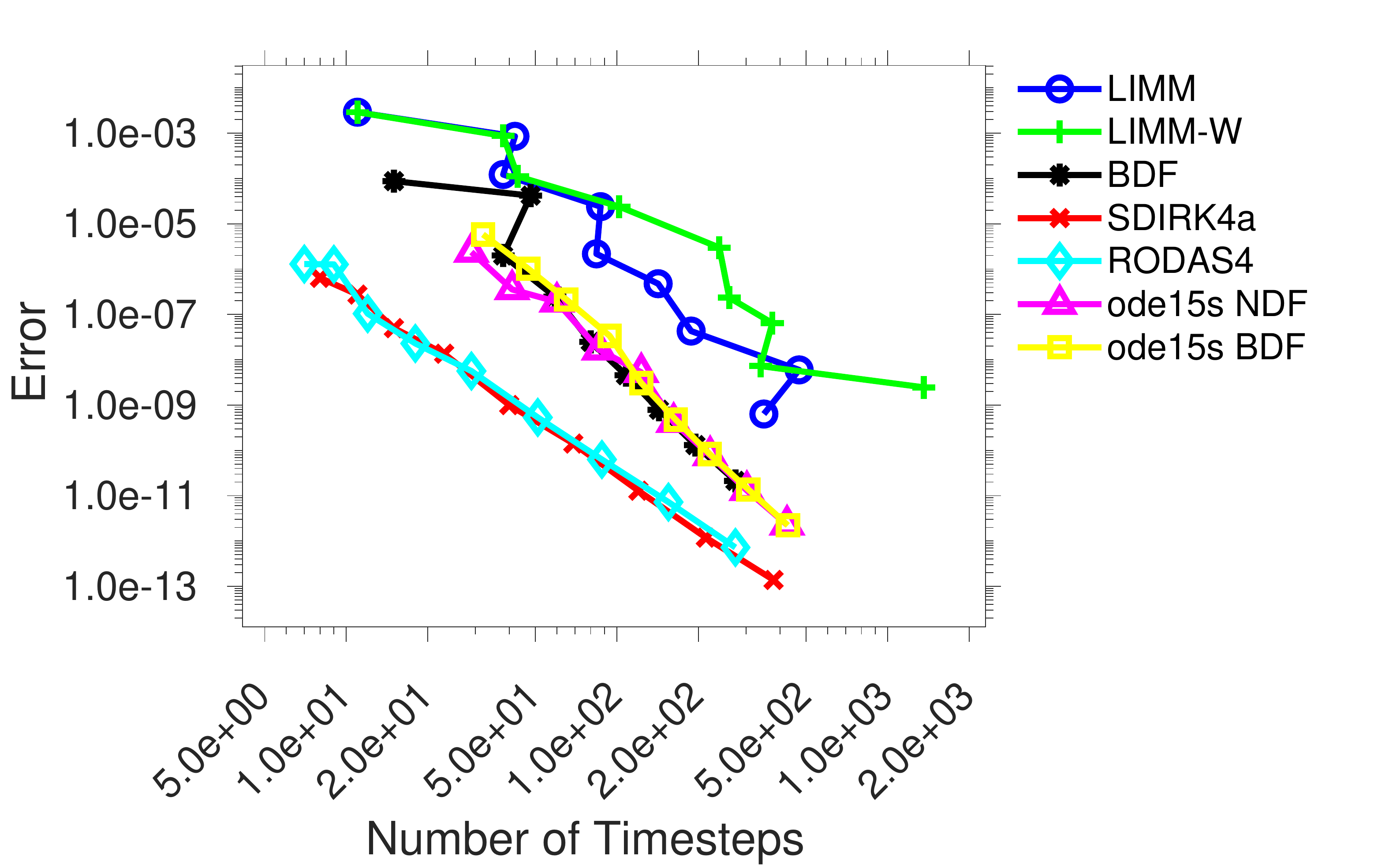}
\label{fig:qgsteps}
}
\subfloat[Work-precision diagram.]{
\includegraphics[trim=5 5 50 5, clip, width=0.47\textwidth]{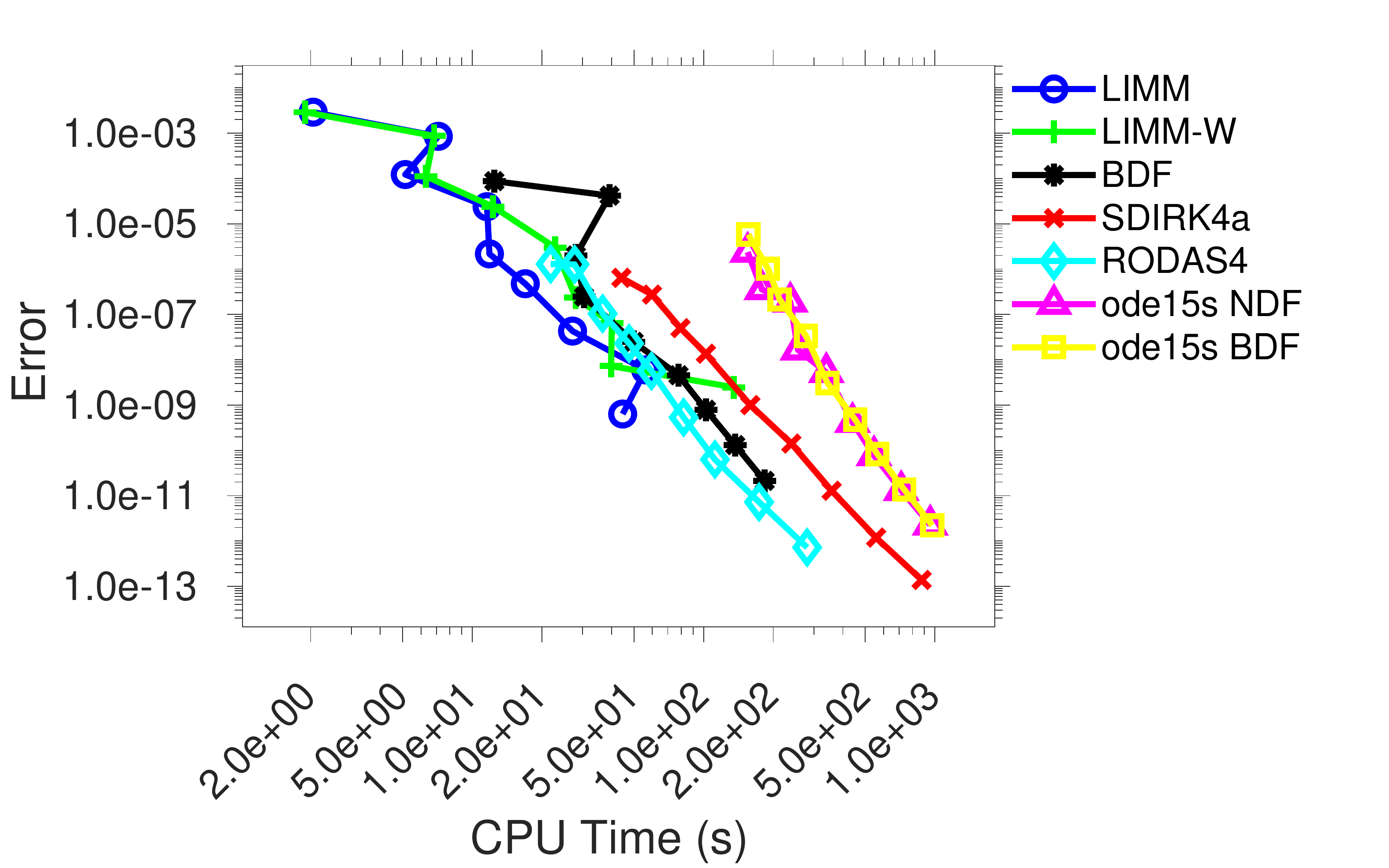}
\label{fig:qgtimes}
}
\caption{Performance results  for different integrators applied to the 1.5 layer QG model \eqref{eqn:QG}.}
\label{fig:qgresults}
\end{figure}

\section{Conclusions}
\label{sec:conclusions}

Classical implicit linear multistep methods require the solution of a nonlinear system of equations at each timestep. This work develops the {\sc Limm} class of linearly implicit multistep methods, which only require the solution of one linear system per timestep. Order conditions for variable stepsize {\sc Limm} methods of arbitrary order are constructed via Butcher trees and B-series operations. Order conditions for {\sc Limm-w} schemes that can use arbitrary approximations of the Jacobian matrix are developed by direct series expansion. A {\sc Limm} method has twice as many coefficients as a traditional linear multistep method (with the same number of steps), and this additional freedom enables the construction of methods with excellent accuracy and stability properties. We discuss the optimal design of  {\sc Limm} methods, and develop a set of $k$-step order $k$ {\sc Limm} methods for $k = 1,\dots,5$. We also discuss the details of a self-starting variable stepsize and variable order implementation of these {\sc Limm} methods. Numerical experiments demonstrate the convergence at the theoretical orders for the fixed stepsize {\sc Limm} methods. Moreover, our variable stepsize {\sc Limm} implementation outperforms a comparable BDF implementation for nonlinear problems.

\iflong
In the future, we intend to use the family of {\sc Limm-w} methods to reuse the same Jacobian for multiple timesteps. Additionally, extension of {\sc Limm} to use a Krylov subspace approximation of the Jacobian, as in Rosenbrock-Krylov or exponential-Krylov methods is underway, which will directly account for errors coming from Krylov-based iterative linear solvers in the order conditions of the time integrator.
\fi

\ifnosup
\appendix
\section{Variable stepsize {\sc Limm} coefficients}
\label{sec:limmvarcoeff}

Tables \ref{tbl:limmvcoef1} -- \ref{tbl:limmvcoef5b} contain the variable stepsize coefficient expressions for {\sc Limm} methods of order 1-5, parameterized by the $c_i$'s defined in \eqref{eqn:abscissae}, and the constant coeffcients.

\begin{table}[H]
\centering
\caption{{\sc Limm} 1-step order 1 coefficients.}
\label{tbl:limmvcoef1}
{\footnotesize
\renewcommand{\arraystretch}{2}
$\begin{array}{c|cc}
 i & -1 & 0 \\
\hline
 \alpha_i & 1 & -1 \\
 \beta_i & 0 & 1 \\
 \mu_i & 1 & -1 \\
\end{array}
$
}
\end{table}

\begin{table}[H]
\centering
\caption{{\sc Limm} 2-step order 2 variable stepsize coefficients.}
\label{tbl:limmvcoef2}
{\footnotesize
\renewcommand{\arraystretch}{2}
$\begin{array}{c|ccc}
 i & -1 & 0 & 1 \\
\hline
 \alpha_i & 1 & -\frac{4}{3} & \frac{1}{3} \\
 \beta_i & 0 & \frac{2}{3} & -\beta_0+\left(\alpha_0+1\right) c_1+1 \\
 \mu_i & \frac{1}{2} \left(1-\left(\alpha_0+1\right) c_1^2\right) & \frac{1}{2} \left(-2 \beta_0+\left(\alpha_0+1\right) c_1^2+2 \left(\alpha_0+1\right) c_1+1\right) & \beta_0+\left(\alpha_0+1\right) \left(-c_1\right)-1 \\
\end{array}
$
}
\end{table}

\begin{table}[H]
\centering
\caption{{\sc Limm} 3-step order 3 variable stepsize coefficients.}
\label{tbl:limmvcoef3}
{\tiny
\renewcommand{\arraystretch}{2}
\begin{turn}{-90}
$\begin{array}{c|l}
\hline
 \alpha_{-1} & 1 \\
 \beta_{-1} & 0 \\
 \mu_{-1} & \left[\alpha_1 c_1^3-3 c_1 \left(\left(\alpha_0+\alpha_1+1\right) c_2^2-1\right)+2 \left(\alpha_0+\alpha_1+1\right) c_2^3+2\right]/\left(6 \left(c_1+1\right)\right) \\
\hline
 \alpha_0 & -67569925/40220258 \\
 \beta_0 & 6/11 \\
 \mu_0 & -\left[-3 c_1 \left(-2 \beta_0+\left(\alpha_0+\alpha_1+1\right) c_2^2+2 \left(\alpha_0+\alpha_1+1\right) c_2+1\right)+\alpha_1 c_1^3+3 \alpha_1 c_1^2+2 \left(\alpha_0+\alpha_1+1\right) c_2^3+3 \left(\alpha_0+\alpha_1+1\right) c_2^2-1\right]/\left(6 c_1\right) \\
\hline
 \alpha_1 & 77233903/99562899 \\
 \beta_1 & \left[-\alpha_1 c_1^3+3 \alpha_1 c_2^2 c_1-2 \left(\alpha_0+\alpha_1+1\right) c_2^3+3 \left(\beta_0-1\right) c_2^2+1\right]/\left(3 \left(c_1^2-c_2^2\right)\right) \\
 \mu_1 & \left[3 c_1^2 \left(c_2^2 \left(\alpha_0-2 \beta_0+3\right)+2 \left(\alpha_0+\alpha_1+1\right) c_2^3-1\right)+c_1 \left(4 \left(\alpha_0+\alpha_1+1\right) c_2^3-6 \left(\beta_0-1\right) c_2^2-2\right)-2 \left(\alpha_0+\alpha_1+1\right) c_2^5 \right. \\ & \left. -3 \left(\alpha_0+\alpha_1+1\right) c_2^4-4 \alpha_1 c_1^3 c_2^2-\alpha_1 c_1^4+c_2^2\right]/\left(6 c_1 \left(c_1+1\right) \left(c_1-c_2\right) \left(c_1+c_2\right)\right) \\
\hline
 \alpha_2 & -383355371802341/4004445485007942 \\
 \beta_2 & -\left[3 c_1^2 \left(\beta_0+\left(\alpha_0+\alpha_1+1\right) \left(-c_2\right)-1\right)+2 \alpha_1 c_1^3+\left(\alpha_0+\alpha_1+1\right) c_2^3+1\right]/\left(3 \left(c_1^2-c_2^2\right)\right) \\
 \mu_2 & \left[3 c_1^2 \left(\beta_0+\left(\alpha_0+\alpha_1+1\right) \left(-c_2\right)-1\right)+2 \alpha_1 c_1^3+\left(\alpha_0+\alpha_1+1\right) c_2^3+1\right]/\left(3 \left(c_1^2-c_2^2\right)\right) \\
\end{array}
$
\end{turn}
}
\end{table}

\begin{table}[H]
\centering
\caption{{\sc Limm} 4-step order 4 variable stepsize coefficients.}
\label{tbl:limmvcoef4}
{\tiny
\renewcommand{\arraystretch}{2}
\begin{turn}{-90}
$\begin{array}{c|l}
\hline
 \alpha_{-1} & 1 \\
 \beta_{-1} & 0 \\
 \mu_{-1} & \left[-\alpha_1 c_1^4+2 \alpha_1 c_2 c_1^3+2 c_1 \left(\alpha_2 c_2^3-3 c_2 \left(\left(\alpha_0+\alpha_1+\alpha_2+1\right) c_3^2-1\right)+2 \left(\alpha_0+\alpha_1+\alpha_2+1\right) c_3^3+2\right)-\alpha_2 c_2^4-3 \left(\alpha_0+\alpha_1+\alpha_2+1\right) c_3^4+4 c_2 \left(\left(\alpha_0+\alpha_1+\alpha_2+1\right) c_3^3+1\right)+3\right]/\left(12 \left(c_1+1\right) \left(c_2+1\right)\right) \\
\hline
 \alpha_0 & -60010656/28439311 \\
 \beta_0 & 12/25 \\
 \mu_0 & \left[-2 c_1 \left(-3 c_2 \left(-2 \beta_0+\left(\alpha_0+\alpha_1+\alpha_2+1\right) c_3^2+2 \left(\alpha_0+\alpha_1+\alpha_2+1\right) c_3+1\right)+\alpha_2 c_2^3+3 \alpha_2 c_2^2+2 \left(\alpha_0+\alpha_1+\alpha_2+1\right) c_3^3+3 \left(\alpha_0+\alpha_1+\alpha_2+1\right) c_3^2-1\right)+\alpha_1 c_1^4-2 \alpha_1 \left(c_2-1\right) c_1^3-6 \alpha_1 c_2 c_1^2 \right. \\ & \left. +3 \alpha_0 c_3^4 +4 \alpha_0 c_3^3+3 \alpha_1 c_3^4+4 \alpha_1 c_3^3+\alpha_2 c_2^4+3 \alpha_2 c_3^4+2 \alpha_2 c_2^3+4 \alpha_2 c_3^3-2 c_2 \left(2 \left(\alpha_0+\alpha_1+\alpha_2+1\right) c_3^3+3 \left(\alpha_0+\alpha_1+\alpha_2+1\right) c_3^2-1\right)+3 c_3^4+4 c_3^3+1\right]/\left(12 c_1 c_2\right) \\
\hline
 \alpha_1 & 71006953/40099309 \\
 \beta_1 & \left[4 c_2^2 \left(\alpha_1 c_1^3-3 \alpha_1 c_3^2 c_1+2 \left(\alpha_0+\alpha_1+\alpha_2+1\right) c_3^3-3 \left(\beta_0-1\right) c_3^2-1\right)+\alpha_2 c_2^5+\alpha_2 c_3 c_2^4-8 \alpha_2 c_3^2 c_2^3-c_2 \left(3 \alpha_1 c_1^4-4 \alpha_1 c_3 c_1^3+\left(\alpha_0+\alpha_1+\alpha_2+1\right) c_3^4+4 c_3+3\right) \right. \\ & \left. -c_3 \left(3 \alpha_1 c_1^4-4 \alpha_1 c_3 c_1^3 +\left(\alpha_0+\alpha_1+\alpha_2+1\right) c_3^4+4 c_3+3\right)\right]/\left(12 \left(c_1-c_2\right) \left(c_1-c_3\right) \left(c_2 c_3+c_1 \left(c_2+c_3\right)\right)\right) \\
 \mu_1 & -\left[-2 c_1^2 \left(-3 c_2^2 \left(c_3^2 \left(\alpha_0+\alpha_2-2 \beta_0+3\right)+2 \left(\alpha_0+\alpha_1+\alpha_2+1\right) c_3^3-1\right)+\alpha_2 c_2^4+\alpha_2 c_3 \left(4 c_3+1\right) c_2^3-c_2 \left(\left(\alpha_0+\alpha_1+\alpha_2+1\right) c_3^3-3 c_3-2\right)+2 c_3 \left(c_3+1\right) \left(\left(\alpha_0+\alpha_1+\alpha_2+1\right) c_3^3+1\right)\right) \right. \\ & \left. +c_1 \left(4 c_2^2 \left(2 \left(\alpha_0+\alpha_1+\alpha_2+1\right) c_3^3-3 \left(\beta_0-1\right) c_3^2-1\right)+\alpha_2 c_2^5+\alpha_2 c_3 \left(c_3+1\right) c_2^4-6 \alpha_2 c_3^2 c_2^3-c_2 \left(4 \left(\alpha_0+\alpha_1+\alpha_2+1\right) c_3^5+7 \left(\alpha_0+\alpha_1+\alpha_2+1\right) c_3^4-2 c_3^2+4 c_3+3\right) \right.\right. \\ & \left.\left. +3 c_3 \left(c_3+1\right) \left(\left(\alpha_0+\alpha_1+\alpha_2+1\right) c_3^4-1\right)\right) +\alpha_1 \left(c_3^2+c_3+c_2\right) c_1^5 -\alpha_1 c_2 \left(2 c_2-\left(c_3-2\right) c_3\right) c_1^4-2 \alpha_1 c_2 \left(4 c_2-1\right) c_3^2 c_1^3 \right. \\ & \left. +c_2 c_3^2\left(\alpha_2 c_2^4+2 \alpha_2 c_2^3-2 c_2 \left(2 \left(\alpha_0+\alpha_1+\alpha_2+1\right) c_3^3+3 \left(\alpha_0+\alpha_1+\alpha_2+1\right) c_3^2-1\right)+3 \left(\alpha_0+\alpha_1+\alpha_2+1\right) c_3^4+4 \left(\alpha_0+\alpha_1+\alpha_2+1\right) c_3^3+1\right)\right] \\ & /\left(12 c_1 \left(c_1+1\right) \left(c_1-c_2\right) \left(c_1-c_3\right) \left(c_2 c_3+c_1 \left(c_2+c_3\right)\right)\right) \\
\hline
 \alpha_2 & -345107661/454781887 \\
 \beta_2 & \left[-4 c_1^2 \left(\alpha_2 c_2^3-3 \alpha_2 c_3^2 c_2+2 \left(\alpha_0+\alpha_1+\alpha_2+1\right) c_3^3-3 \left(\beta_0-1\right) c_3^2-1\right)-\alpha_1 c_1^5-\alpha_1 c_3 c_1^4+8 \alpha_1 c_3^2 c_1^3+c_1 \left(3 \alpha_2 c_2^4-4 \alpha_2 c_3 c_2^3+\left(\alpha_0+\alpha_1+\alpha_2+1\right) c_3^4+4 c_3+3\right) \right. \\ & \left. +c_3 \left(3 \alpha_2 c_2^4-4 \alpha_2 c_3 c_2^3+\left(\alpha_0+\alpha_1+\alpha_2+1\right) c_3^4+4 c_3+3\right)\right]/\left(12 \left(c_1-c_2\right) \left(c_2-c_3\right) \left(c_2 c_3+c_1 \left(c_2+c_3\right)\right)\right) \\
 \mu_2 & \left[-2 c_1^2 \left(-3 c_2^2 \left(c_3^2 \left(\alpha_0+\alpha_1-2 \beta_0+3\right)+2 \left(\alpha_0+\alpha_1+\alpha_2+1\right) c_3^3-1\right)-2 c_2 \left(2 \left(\alpha_0+\alpha_1+\alpha_2+1\right) c_3^3-3 \left(\beta_0-1\right) c_3^2-1\right)+\alpha_2 c_2^4+4 \alpha_2 c_3^2 c_2^3 \right.\right. \\ & \left.\left. +c_3^2 \left(2 \left(\alpha_0+\alpha_1+\alpha_2+1\right) c_3^3+3 \left(\alpha_0+\alpha_1+\alpha_2+1\right) c_3^2-1\right)\right) +\alpha_1 \left(c_3^2+c_2\right) c_1^5+\alpha_1 \left(-2 c_2^2+c_3 \left(c_3+1\right) c_2+2 c_3^2\right) c_1^4-2 \alpha_1 c_2 c_3 \left(3 c_3+c_2 \left(4 c_3+1\right)\right) c_1^3 \right. \\ & \left. +c_1 \left(3 \left(\alpha_0+\alpha_1+\alpha_2+1\right) c_3^6+4 \left(\alpha_0+\alpha_1+\alpha_2+1\right) c_3^5+2 \alpha_2 c_2^3 c_3^2+\alpha_2 c_2^4 \left(c_3-2\right) c_3+\alpha_2 c_2^5 +2 c_2^2 \left(\left(\alpha_0+\alpha_1+\alpha_2+1\right) c_3^3-3 c_3-2\right)-c_2 \left(4 \left(\alpha_0+\alpha_1+\alpha_2+1\right) c_3^5+7 \left(\alpha_0+\alpha_1+\alpha_2+1\right) c_3^4 \right.\right.\right. \\ & \left.\left.\left. -2 c_3^2+4 c_3+3\right)+c_3^2\right) +c_2 c_3 \left(c_3+1\right) \left(\alpha_2 c_2^4-4 c_2 \left(\left(\alpha_0+\alpha_1+\alpha_2+1\right) c_3^3+1\right)+3 \left(\alpha_0+\alpha_1+\alpha_2+1\right) c_3^4-3\right)\right]/\left(12 \left(c_1-c_2\right) c_2 \left(c_2+1\right) \left(c_2-c_3\right) \left(c_2 c_3+c_1 \left(c_2+c_3\right)\right)\right) \\
\hline
 \alpha_3 & 50927106883029008210353/518631772039236867838813 \\
 \beta_3 & \left[4 c_1^2 \left(3 c_2^2 \left(\beta_0+\left(\alpha_0+\alpha_1+\alpha_2+1\right) \left(-c_3\right)-1\right)+2 \alpha_2 c_2^3+\left(\alpha_0+\alpha_1+\alpha_2+1\right) c_3^3+1\right)-\alpha_1 c_1^5-\alpha_1 c_2 c_1^4+8 \alpha_1 c_2^2 c_1^3+c_1 \left(-\alpha_2 c_2^4+4 c_2 \left(\left(\alpha_0+\alpha_1+\alpha_2+1\right) c_3^3+1\right)-3 \left(\alpha_0+\alpha_1+\alpha_2+1\right) c_3^4+3\right) \right. \\ & \left. +c_2 \left(-\alpha_2 c_2^4+4 c_2 \left(\left(\alpha_0+\alpha_1+\alpha_2+1\right) c_3^3+1\right)-3 \left(\alpha_0+\alpha_1+\alpha_2+1\right) c_3^4+3\right)\right]/\left(12 \left(c_1-c_3\right) \left(c_3-c_2\right) \left(c_2 c_3+c_1 \left(c_2+c_3\right)\right)\right) \\
 \mu_3 & \left[-4 c_1^2 \left(3 c_2^2 \left(\beta_0+\left(\alpha_0+\alpha_1+\alpha_2+1\right) \left(-c_3\right)-1\right)+2 \alpha_2 c_2^3+\left(\alpha_0+\alpha_1+\alpha_2+1\right) c_3^3+1\right)+\alpha_1 c_1^5+\alpha_1 c_2 c_1^4-8 \alpha_1 c_2^2 c_1^3+c_1 \left(\alpha_2 c_2^4-4 c_2 \left(\left(\alpha_0+\alpha_1+\alpha_2+1\right) c_3^3+1\right)+3 \left(\alpha_0+\alpha_1+\alpha_2+1\right) c_3^4-3\right) \right. \\ & \left. +c_2 \left(\alpha_2 c_2^4-4 c_2 \left(\left(\alpha_0+\alpha_1+\alpha_2+1\right) c_3^3+1\right)+3 \left(\alpha_0+\alpha_1+\alpha_2+1\right) c_3^4-3\right)\right]/\left(12 \left(c_1-c_3\right) \left(c_3-c_2\right) \left(c_2 c_3+c_1 \left(c_2+c_3\right)\right)\right) \\
\end{array}
$
\end{turn}
}
\end{table}

\begin{table}[H]
\centering
\caption{{\sc Limm} 5-step order 5 variable stepsize coefficients (part 1).}
\label{tbl:limmvcoef5a}
{\fontsize{3pt}{4pt}\selectfont
\renewcommand{\arraystretch}{2}
\begin{turn}{-90}
$\begin{array}{c|l}
\hline
 \alpha_{-1} & 1 \\
 \beta_{-1} & 0 \\
 \mu_{-1} & \left[3 \alpha_1 c_1^5-5 \alpha_1 \left(c_2+c_3\right) c_1^4+10 \alpha_1 c_2 c_3 c_1^3-5 c_1 \left(\alpha_2 c_2^4-2 \alpha_2 c_3 c_2^3-2 c_2 \left(\alpha_3 c_3^3-3 c_3 \left(\left(\alpha_0+\alpha_1+\alpha_2+\alpha_3+1\right) c_4^2-1\right)+2 \left(\alpha_0+\alpha_1+\alpha_2+\alpha_3+1\right) c_4^3+2\right)+\alpha_3 c_3^4+3 \left(\alpha_0+\alpha_1+\alpha_2+\alpha_3+1\right) c_4^4 \right.\right. \\ & \left.\left. -4 c_3 \left(\left(\alpha_0+\alpha_1+\alpha_2+\alpha_3+1\right) c_4^3+1\right)-3\right)+3 \alpha_2 c_2^5-5 \alpha_2 c_2^4 c_3-5 c_2 \left(\alpha_3 c_3^4-4 c_3 \left(\left(\alpha_0+\alpha_1+\alpha_2+\alpha_3+1\right) c_4^3+1\right)+3 \left(\alpha_0+\alpha_1+\alpha_2+\alpha_3+1\right) c_4^4-3\right) \right. \\ & \left. +3 \left(\alpha_3 c_3^5-5 c_3 \left(\left(\alpha_0+\alpha_1+\alpha_2+\alpha_3+1\right) c_4^4-1\right)+4 \left(\alpha_0+\alpha_1+\alpha_2+\alpha_3+1\right) c_4^5+4\right)\right]/\left(60 \left(c_1+1\right) \left(c_2+1\right) \left(c_3+1\right)\right) \\
\hline
 \alpha_0 & -104367911/41202283 \\
 \beta_0 & 60/137 \\
 \mu_0 & \left[-3 \alpha_1 c_1^5+5 \left(c_2+c_3-1\right) \alpha_1 c_1^4-10 \left(c_2 \left(c_3-1\right)-c_3\right) \alpha_1 c_1^3-30 c_2 c_3 \alpha_1 c_1^2+5 \left(\alpha_2 c_2^4-2 \left(c_3-1\right) \alpha_2 c_2^3-6 c_3 \alpha_2 c_2^2-2 \left(\alpha_3 c_3^3+3 \alpha_3 c_3^2-3 \left(\left(\alpha_0+\alpha_1+\alpha_2+\alpha_3+1\right) c_4^2 \right.\right.\right.\right. \\ & \left.\left.\left.\left. +2 \left(\alpha_0+\alpha_1+\alpha_2+\alpha_3+1\right) c_4-2 \beta_0+1\right) c_3+2 c_4^3 \left(\alpha_0+\alpha_1+\alpha_2+\alpha_3+1\right)+3 c_4^2 \left(\alpha_0+\alpha_1+\alpha_2+\alpha_3+1\right)-1\right) c_2+3 c_4^4+4 c_4^3+3 c_4^4 \alpha_0+4 c_4^3 \alpha_0+3 c_4^4 \alpha_1+4 c_4^3 \alpha_1+3 c_4^4 \alpha_2+4 c_4^3 \alpha_2+c_3^4 \alpha_3 \right.\right. \\ & \left.\left. +3 c_4^4 \alpha_3+2 c_3^3 \alpha_3+4 c_4^3 \alpha_3-2 c_3 \left(2 \left(\alpha_0+\alpha_1+\alpha_2+\alpha_3+1\right) c_4^3+3 \left(\alpha_0+\alpha_1+\alpha_2+\alpha_3+1\right) c_4^2-1\right)+1\right) c_1-12 c_4^5+15 c_3 c_4^4-15 c_4^4+20 c_3 c_4^3+5 c_3-12 c_4^5 \alpha_0+15 c_3 c_4^4 \alpha_0-15 c_4^4 \alpha_0+20 c_3 c_4^3 \alpha_0 \right. \\ & \left. -12 c_4^5 \alpha_1+15 c_3 c_4^4 \alpha_1-15 c_4^4 \alpha_1+20 c_3 c_4^3 \alpha_1-3 c_2^5 \alpha_2-12 c_4^5 \alpha_2+15 c_3 c_4^4 \alpha_2-15 c_4^4 \alpha_2+20 c_3 c_4^3 \alpha_2+5 c_2^4 \left(c_3-1\right) \alpha_2+10 c_2^3 c_3 \alpha_2-3 c_3^5 \alpha_3-12 c_4^5 \alpha_3-5 c_3^4 \alpha_3+15 c_3 c_4^4 \alpha_3-15 c_4^4 \alpha_3+20 c_3 c_4^3 \alpha_3 \right. \\ & \left.+5 c_2 \left(\alpha_3 c_3^4+2 \alpha_3 c_3^3-2 \left(2 \left(\alpha_0+\alpha_1+\alpha_2+\alpha_3+1\right) c_4^3+3 \left(\alpha_0+\alpha_1+\alpha_2+\alpha_3+1\right) c_4^2-1\right) c_3+3 c_4^4 \left(\alpha_0+\alpha_1+\alpha_2+\alpha_3+1\right)+4 c_4^3 \left(\alpha_0+\alpha_1+\alpha_2+\alpha_3+1\right)+1\right)+3\right]/\left(60 c_1 c_2 c_3\right) \\
\hline
 \alpha_1 & 59680231/21017185 \\
 \beta_1 & \left[-5 c_2^2 \left(4 c_3^2 \left(\alpha_1 c_1^3-3 \alpha_1 c_4^2 c_1+2 \left(\alpha_0+\alpha_1+\alpha_2+\alpha_3+1\right) c_4^3-3 \left(\beta_0-1\right) c_4^2-1\right)+\alpha_3 c_3^5+\alpha_3 c_4 c_3^4-8 \alpha_3 c_4^2 c_3^3-c_3 \left(3 \alpha_1 c_1^4-4 \alpha_1 c_4 c_1^3+\left(\alpha_0+\alpha_1+\alpha_2+\alpha_3+1\right) c_4^4+4 c_4+3\right) \right.\right. \\ & \left.\left. -c_4 \left(3 \alpha_1 c_1^4-4 \alpha_1 c_4 c_1^3+\left(\alpha_0+\alpha_1+\alpha_2+\alpha_3+1\right) c_4^4+4 c_4+3\right)\right)+3 \alpha_2 \left(c_3+c_4\right) c_2^6-\alpha_2 \left(5 c_3^2+2 c_4 c_3+5 c_4^2\right) c_2^5-5 \alpha_2 c_3 c_4 \left(c_3+c_4\right) c_2^4+40 \alpha_2 c_3^2 c_4^2 c_2^3+\left(c_3+c_4\right) c_2 \left(-12 \alpha_1 c_1^5 \right.\right. \\ & \left.\left. +15 \alpha_1 \left(c_3+c_4\right) c_1^4-20 \alpha_1 c_3 c_4 c_1^3+3 \alpha_3 c_3^5-5 \alpha_3 c_3^4 c_4-3 \left(\alpha_0+\alpha_1+\alpha_2+\alpha_3+1\right) c_4^5+5 c_3 \left(\left(\alpha_0+\alpha_1+\alpha_2+\alpha_3+1\right) c_4^4+4 c_4+3\right)+15 c_4+12\right)+c_3 c_4 \left(-12 \alpha_1 c_1^5+15 \alpha_1 \left(c_3+c_4\right) c_1^4 \right.\right. \\ & \left.\left. -20 \alpha_1 c_3 c_4 c_1^3+3 \alpha_3 c_3^5-5 \alpha_3 c_3^4 c_4-3 \left(\alpha_0+\alpha_1+\alpha_2+\alpha_3+1\right) c_4^5+5 c_3 \left(\left(\alpha_0+\alpha_1+\alpha_2+\alpha_3+1\right) c_4^4+4 c_4+3\right)+15 c_4+12\right)\right] \\ & /\left(60 \left(c_1-c_2\right) \left(c_1-c_3\right) \left(c_1-c_4\right) \left(c_2 c_3 c_4+c_1 \left(c_3 c_4+c_2 \left(c_3+c_4\right)\right)\right)\right) \\
 \mu_1 & \left[-3 \left(c_3 c_4 \left(c_4+1\right)+c_2 \left(c_4^2+c_4+c_3\right)\right) \alpha_1 c_1^6+\left(5 \left(c_4^2+c_4+c_3\right) c_2^2+c_3 \left(5 c_3+2 c_4 \left(c_4+5\right)\right) c_2+5 c_3^2 c_4 \left(c_4+1\right)\right) \alpha_1 c_1^5-5 c_2 c_3 \left(c_4 \left(c_4-c_3 \left(c_4-2\right)\right)+c_2 \left(2 c_3-\left(c_4-2\right) c_4\right)\right) \alpha_1 c_1^4 \right. \\ & \left. +10 c_2 c_3 \left(c_2 \left(1-4 c_3\right)+c_3\right) c_4^2 \alpha_1 c_1^3+5 \left(\left(c_4^2+c_4+c_3\right) \alpha_2 c_2^5+c_3 \left(\left(c_4-1\right) c_4-2 c_3\right) \alpha_2 c_2^4-2 c_3^2 c_4 \left(4 c_4+1\right) \alpha_2 c_2^3-2 \left(\alpha_3 c_3^4+c_4 \left(4 c_4+1\right) \alpha_3 c_3^3-3 \left(2 \left(\alpha_0+\alpha_1+\alpha_2+\alpha_3+1\right) c_4^3 \right.\right.\right.\right. \\ & \left.\left.\left.\left. +\left(\alpha_0+\alpha_2+\alpha_3-2 \beta_0+3\right) c_4^2-1\right) c_3^2-\left(\left(\alpha_0+\alpha_1+\alpha_2+\alpha_3+1\right) c_4^3-3 c_4-2\right) c_3+2 c_4 \left(c_4+1\right) \left(\left(\alpha_0+\alpha_1+\alpha_2+\alpha_3+1\right) c_4^3+1\right)\right) c_2^2+\left(\alpha_3 c_3^5+\left(c_4-1\right) c_4 \alpha_3 c_3^4 \right.\right.\right. \\ & \left.\left.\left. +2 \left(\left(\alpha_0+\alpha_1+\alpha_2+\alpha_3+1\right) c_4^3-3 c_4-2\right) c_3^2-\left(4 \left(\alpha_0+\alpha_1+\alpha_2+\alpha_3+1\right) c_4^5+5 \left(\alpha_0+\alpha_1+\alpha_2+\alpha_3+1\right) c_4^4+4 c_4^2+8 c_4+3\right) c_3+3 c_4 \left(c_4+1\right) \left(c_4^4 \left(\alpha_0+\alpha_1+\alpha_2+\alpha_3+1\right)-1\right)\right) c_2 \right.\right. \\ & \left.\left. +c_3 c_4 \left(c_4+1\right) \left(\alpha_3 c_3^4-4 \left(\left(\alpha_0+\alpha_1+\alpha_2+\alpha_3+1\right) c_4^3+1\right) c_3+3 c_4^4 \left(\alpha_0+\alpha_1+\alpha_2+\alpha_3+1\right)-3\right)\right) c_1^2-\left(3 \left(c_4^2+c_4+c_3\right) \alpha_2 c_2^6-c_3 \left(5 c_3+2 c_4 \left(c_4+1\right)\right) \alpha_2 c_2^5-5 c_3 c_4 \left(2 c_4+c_3 \left(c_4+1\right)\right) \alpha_2 c_2^4 \right.\right. \\ & \left.\left. +30 c_3^2 c_4^2 \alpha_2 c_2^3-5 \left(\alpha_3 c_3^5+c_4 \left(c_4+1\right) \alpha_3 c_3^4-6 c_4^2 \alpha_3 c_3^3+4 \left(2 \left(\alpha_0+\alpha_1+\alpha_2+\alpha_3+1\right) c_4^3-3 \left(\beta_0-1\right) c_4^2-1\right) c_3^2-\left(4 \left(\alpha_0+\alpha_1+\alpha_2+\alpha_3+1\right) c_4^5+7 \left(\alpha_0+\alpha_1+\alpha_2+\alpha_3+1\right) c_4^4-2 c_4^2+4 c_4 \right.\right.\right.\right. \\ & \left.\left.\left.\left. +3\right) c_3+3 c_4 \left(c_4+1\right) \left(c_4^4 \left(\alpha_0+\alpha_1+\alpha_2+\alpha_3+1\right)-1\right)\right) c_2^2+\left(3 \alpha_3 c_3^6-2 c_4 \left(c_4+1\right) \alpha_3 c_3^5-10 c_4^2 \alpha_3 c_3^4+5 \left(4 \left(\alpha_0+\alpha_1+\alpha_2+\alpha_3+1\right) c_4^5+7 \left(\alpha_0+\alpha_1+\alpha_2+\alpha_3+1\right) c_4^4-2 c_4^2+4 c_4+3\right) c_3^2 \right.\right.\right. \\ & \left.\left.\left. -2 \left(15 \left(\alpha_0+\alpha_1+\alpha_2+\alpha_3+1\right) c_4^6+19 \left(\alpha_0+\alpha_1+\alpha_2+\alpha_3+1\right) c_4^5-5 c_4^2-15 c_4-6\right) c_3+12 c_4 \left(c_4+1\right) \left(\left(\alpha_0+\alpha_1+\alpha_2+\alpha_3+1\right) c_4^5+1\right)\right) c_2+3 c_3 c_4 \left(c_4+1\right) \left(\alpha_3 c_3^5 \right.\right.\right. \\ & \left.\left.\left. -5 \left(c_4^4 \left(\alpha_0+\alpha_1+\alpha_2+\alpha_3+1\right)-1\right) c_3+4 c_4^5 \left(\alpha_0+\alpha_1+\alpha_2+\alpha_3+1\right)+4\right)\right) c_1+c_2 c_3 c_4^2 \left(-3 \alpha_2 c_2^5+5 \left(c_3-1\right) \alpha_2 c_2^4+10 c_3 \alpha_2 c_2^3+5 \left(\alpha_3 c_3^4+2 \alpha_3 c_3^3-2 \left(2 \left(\alpha_0+\alpha_1+\alpha_2+\alpha_3+1\right) c_4^3 \right.\right.\right.\right. \\ & \left.\left.\left.\left. +3 \left(\alpha_0+\alpha_1+\alpha_2+\alpha_3+1\right) c_4^2-1\right) c_3+3 c_4^4 \left(\alpha_0+\alpha_1+\alpha_2+\alpha_3+1\right)+4 c_4^3 \left(\alpha_0+\alpha_1+\alpha_2+\alpha_3+1\right)+1\right) c_2-3 c_3^5 \alpha_3-5 c_3^4 \alpha_3+5 c_3 \left(3 \left(\alpha_0+\alpha_1+\alpha_2+\alpha_3+1\right) c_4^4 \right.\right.\right. \\ & \left.\left.\left. +4 \left(\alpha_0+\alpha_1+\alpha_2+\alpha_3+1\right) c_4^3+1\right)-3 \left(4 \left(\alpha_0+\alpha_1+\alpha_2+\alpha_3+1\right) c_4^5+5 \left(\alpha_0+\alpha_1+\alpha_2+\alpha_3+1\right) c_4^4-1\right)\right)\right]/\left(60 c_1 \left(c_1+1\right) \left(c_1-c_2\right) \left(c_1-c_3\right) \left(c_1-c_4\right) \left(c_2 c_3 c_4+c_1 \left(c_3 c_4+c_2 \left(c_3+c_4\right)\right)\right)\right) \\
\hline
 \alpha_2 & -97736124/57440479 \\
 \beta_2 & -\left[-5 c_1^2 \left(4 c_3^2 \left(\alpha_2 c_2^3-3 \alpha_2 c_4^2 c_2+2 \left(\alpha_0+\alpha_1+\alpha_2+\alpha_3+1\right) c_4^3-3 \left(\beta_0-1\right) c_4^2-1\right)+\alpha_3 c_3^5+\alpha_3 c_4 c_3^4-8 \alpha_3 c_4^2 c_3^3-c_3 \left(3 \alpha_2 c_2^4-4 \alpha_2 c_4 c_2^3+\left(\alpha_0+\alpha_1+\alpha_2+\alpha_3+1\right) c_4^4+4 c_4+3\right) \right.\right. \\ & \left.\left. -c_4 \left(3 \alpha_2 c_2^4-4 \alpha_2 c_4 c_2^3+\left(\alpha_0+\alpha_1+\alpha_2+\alpha_3+1\right) c_4^4+4 c_4+3\right)\right)+3 \alpha_1 \left(c_3+c_4\right) c_1^6-\alpha_1 \left(5 c_3^2+2 c_4 c_3+5 c_4^2\right) c_1^5-5 \alpha_1 c_3 c_4 \left(c_3+c_4\right) c_1^4+40 \alpha_1 c_3^2 c_4^2 c_1^3+\left(c_3+c_4\right) c_1 \left(-12 \alpha_2 c_2^5 \right.\right. \\ & \left.\left. +15 \alpha_2 \left(c_3+c_4\right) c_2^4-20 \alpha_2 c_3 c_4 c_2^3+3 \alpha_3 c_3^5 -5 \alpha_3 c_3^4 c_4-3 \left(\alpha_0+\alpha_1+\alpha_2+\alpha_3+1\right) c_4^5+5 c_3 \left(\left(\alpha_0+\alpha_1+\alpha_2+\alpha_3+1\right) c_4^4+4 c_4+3\right)+15 c_4+12\right)+c_3 c_4 \left(-12 \alpha_2 c_2^5+15 \alpha_2 \left(c_3+c_4\right) c_2^4 \right.\right. \\ & \left.\left. -20 \alpha_2 c_3 c_4 c_2^3+3 \alpha_3 c_3^5-5 \alpha_3 c_3^4 c_4-3 \left(\alpha_0+\alpha_1+\alpha_2+\alpha_3+1\right) c_4^5+5 c_3 \left(\left(\alpha_0+\alpha_1+\alpha_2+\alpha_3+1\right) c_4^4+4 c_4+3\right)+15 c_4+12\right)\right] \\ & /\left(60 \left(c_1-c_2\right) \left(c_2-c_3\right) \left(c_2-c_4\right) \left(c_2 c_3 c_4+c_1 \left(c_3 c_4+c_2 \left(c_3+c_4\right)\right)\right)\right) \\
 \mu_2 & \left[3 \left(c_3 c_4^2+c_2 \left(c_4^2+c_4+c_3\right)\right) \alpha_1 c_1^6-\left(5 \left(c_4^2+c_4+c_3\right) c_2^2+c_3 \left(5 c_3+2 c_4 \left(c_4+1\right)\right) c_2+5 \left(c_3-1\right) c_3 c_4^2\right) \alpha_1 c_1^5+5 c_3 \left(\left(-c_4^2+c_4+2 c_3\right) c_2^2-c_4 \left(2 c_4+c_3 \left(c_4+1\right)\right) c_2-2 c_3 c_4^2\right) \alpha_1 c_1^4 \right. \\ & \left. +10 c_2 c_3^2 c_4 \left(3 c_4+c_2 \left(4 c_4+1\right)\right) \alpha_1 c_1^3-5 \left(\left(c_4^2+c_4+c_3\right) \alpha_2 c_2^5+c_3 \left(\left(c_4-2\right) c_4-2 c_3\right) \alpha_2 c_2^4+2 \left(1-4 c_3\right) c_3 c_4^2 \alpha_2 c_2^3-2 \left(\alpha_3 c_3^4+c_4 \left(4 c_4+1\right) \alpha_3 c_3^3-3 \left(2 \left(\alpha_0+\alpha_1+\alpha_2+\alpha_3+1\right) c_4^3 \right.\right.\right.\right. \\ & \left.\left.\left.\left. +\left(\alpha_0+\alpha_1+\alpha_3-2 \beta_0+3\right) c_4^2-1\right) c_3^2-\left(\left(\alpha_0+\alpha_1+\alpha_2+\alpha_3+1\right) c_4^3-3 c_4-2\right) c_3+2 c_4 \left(c_4+1\right) \left(\left(\alpha_0+\alpha_1+\alpha_2+\alpha_3+1\right) c_4^3+1\right)\right) c_2^2+\left(\alpha_3 c_3^5+c_4 \left(c_4+1\right) \alpha_3 c_3^4-6 c_4^2 \alpha_3 c_3^3 \right.\right.\right. \\ & \left.\left.\left. +4 \left(2 \left(\alpha_0+\alpha_1+\alpha_2+\alpha_3+1\right) c_4^3-3 \left(\beta_0-1\right) c_4^2-1\right) c_3^2-\left(4 \left(\alpha_0+\alpha_1+\alpha_2+\alpha_3+1\right) c_4^5+7 \left(\alpha_0+\alpha_1+\alpha_2+\alpha_3+1\right) c_4^4-2 c_4^2+4 c_4+3\right) c_3+3 c_4 \left(c_4+1\right) \left(c_4^4 \left(\alpha_0+\alpha_1+\alpha_2+\alpha_3+1\right)-1\right)\right) c_2 \right.\right. \\ & \left.\left. +c_3 c_4^2 \left(\alpha_3 c_3^4+2 \alpha_3 c_3^3-2 \left(2 \left(\alpha_0+\alpha_1+\alpha_2+\alpha_3+1\right) c_4^3+3 \left(\alpha_0+\alpha_1+\alpha_2+\alpha_3+1\right) c_4^2-1\right) c_3+3 c_4^4 \left(\alpha_0+\alpha_1+\alpha_2+\alpha_3+1\right)+4 c_4^3 \left(\alpha_0+\alpha_1+\alpha_2+\alpha_3+1\right)+1\right)\right) c_1^2+\left(3 \left(c_4^2+c_4+c_3\right) \alpha_2 c_2^6 \right.\right. \\ & \left.\left. -c_3 \left(5 c_3+2 c_4 \left(c_4+5\right)\right) \alpha_2 c_2^5+5 c_3 c_4 \left(c_4-c_3 \left(c_4-2\right)\right) \alpha_2 c_2^4-10 c_3^2 c_4^2 \alpha_2 c_2^3-5 \left(\alpha_3 c_3^5+\left(c_4-1\right) c_4 \alpha_3 c_3^4+2 \left(\left(\alpha_0+\alpha_1+\alpha_2+\alpha_3+1\right) c_4^3-3 c_4-2\right) c_3^2-\left(4 \left(\alpha_0+\alpha_1+\alpha_2+\alpha_3+1\right) c_4^5 \right.\right.\right.\right. \\ & \left.\left.\left.\left. +5 \left(\alpha_0+\alpha_1+\alpha_2+\alpha_3+1\right) c_4^4+4 c_4^2+8 c_4+3\right) c_3+3 c_4 \left(c_4+1\right) \left(c_4^4 \left(\alpha_0+\alpha_1+\alpha_2+\alpha_3+1\right)-1\right)\right) c_2^2+\left(3 \alpha_3 c_3^6-2 c_4 \left(c_4+1\right) \alpha_3 c_3^5-10 c_4^2 \alpha_3 c_3^4+5 \left(4 \left(\alpha_0+\alpha_1+\alpha_2+\alpha_3+1\right) c_4^5 \right.\right.\right.\right. \\ & \left.\left.\left.\left. +7 \left(\alpha_0+\alpha_1+\alpha_2+\alpha_3+1\right) c_4^4-2 c_4^2+4 c_4+3\right) c_3^2-2 \left(15 \left(\alpha_0+\alpha_1+\alpha_2+\alpha_3+1\right) c_4^6+19 \left(\alpha_0+\alpha_1+\alpha_2+\alpha_3+1\right) c_4^5-5 c_4^2-15 c_4-6\right) c_3+12 c_4 \left(c_4+1\right) \left(\left(\alpha_0+\alpha_1+\alpha_2+\alpha_3+1\right) c_4^5+1\right)\right) c_2 \right.\right. \\ & \left.\left. +c_3 c_4^2 \left(3 \alpha_3 c_3^5+5 \alpha_3 c_3^4-5 \left(3 \left(\alpha_0+\alpha_1+\alpha_2+\alpha_3+1\right) c_4^4+4 \left(\alpha_0+\alpha_1+\alpha_2+\alpha_3+1\right) c_4^3+1\right) c_3+3 \left(4 \left(\alpha_0+\alpha_1+\alpha_2+\alpha_3+1\right) c_4^5+5 \left(\alpha_0+\alpha_1+\alpha_2+\alpha_3+1\right) c_4^4-1\right)\right)\right) c_1 \right. \\ & \left. +c_2 c_3 c_4 \left(c_4+1\right) \left(3 \alpha_2 c_2^5-5 c_3 \alpha_2 c_2^4-5 \left(\alpha_3 c_3^4-4 \left(\left(\alpha_0+\alpha_1+\alpha_2+\alpha_3+1\right) c_4^3+1\right) c_3+3 c_4^4 \left(\alpha_0+\alpha_1+\alpha_2+\alpha_3+1\right)-3\right) c_2+3 \left(\alpha_3 c_3^5-5 \left(c_4^4 \left(\alpha_0+\alpha_1+\alpha_2+\alpha_3+1\right)-1\right) c_3 \right.\right.\right. \\ & \left.\left.\left. +4 c_4^5 \left(\alpha_0+\alpha_1+\alpha_2+\alpha_3+1\right)+4\right)\right)\right]/\left(60 \left(c_1-c_2\right) c_2 \left(c_2+1\right) \left(c_2-c_3\right) \left(c_2-c_4\right) \left(c_2 c_3 c_4+c_1 \left(c_3 c_4+c_2 \left(c_3+c_4\right)\right)\right)\right) \\
\end{array}
$
\end{turn}
}
\end{table}

\begin{table}[H]
\centering
\caption{{\sc Limm} 5-step order 5 variable stepsize coefficients (part 2).}
\label{tbl:limmvcoef5b}
{\fontsize{3pt}{4pt}\selectfont
\renewcommand{\arraystretch}{2}
\begin{turn}{-90}
$\begin{array}{c|l}
\hline
 \alpha_4 & -188732392210474496577705869057/1979785468648998861857945444345 \\
 \beta_4 & -\left[-5 c_1^2 \left(-4 c_2^2 \left(3 c_3^2 \left(\beta_0+\left(\alpha_0+\alpha_1+\alpha_2+\alpha_3+1\right) \left(-c_4\right)-1\right)+2 \alpha_3 c_3^3+\left(\alpha_0+\alpha_1+\alpha_2+\alpha_3+1\right) c_4^3+1\right)+\alpha_2 c_2^5+\alpha_2 c_3 c_2^4-8 \alpha_2 c_3^2 c_2^3+c_2 \left(\alpha_3 c_3^4-4 c_3 \left(\left(\alpha_0+\alpha_1+\alpha_2+\alpha_3+1\right) c_4^3+1\right) \right.\right.\right. \\ & \left.\left.\left. +3 \left(\alpha_0+\alpha_1+\alpha_2+\alpha_3+1\right) c_4^4-3\right)+c_3 \left(\alpha_3 c_3^4-4 c_3 \left(\left(\alpha_0+\alpha_1+\alpha_2+\alpha_3+1\right) c_4^3+1\right)+3 \left(\alpha_0+\alpha_1+\alpha_2+\alpha_3+1\right) c_4^4-3\right)\right)+3 \alpha_1 \left(c_2+c_3\right) c_1^6-\alpha_1 \left(5 c_2^2+2 c_3 c_2+5 c_3^2\right) c_1^5 \right. \\ & \left. -5 \alpha_1 c_2 c_3 \left(c_2+c_3\right) c_1^4+40 \alpha_1 c_2^2 c_3^2 c_1^3+\left(c_2+c_3\right) c_1 \left(3 \alpha_2 c_2^5-5 \alpha_2 c_3 c_2^4-5 c_2 \left(\alpha_3 c_3^4-4 c_3 \left(\left(\alpha_0+\alpha_1+\alpha_2+\alpha_3+1\right) c_4^3+1\right)+3 \left(\alpha_0+\alpha_1+\alpha_2+\alpha_3+1\right) c_4^4-3\right) \right.\right. \\ & \left.\left. +3 \left(\alpha_3 c_3^5-5 c_3 \left(\left(\alpha_0+\alpha_1+\alpha_2+\alpha_3+1\right) c_4^4-1\right)+4 \left(\alpha_0+\alpha_1+\alpha_2+\alpha_3+1\right) c_4^5+4\right)\right)+c_2 c_3 \left(3 \alpha_2 c_2^5-5 \alpha_2 c_3 c_2^4-5 c_2 \left(\alpha_3 c_3^4-4 c_3 \left(\left(\alpha_0+\alpha_1+\alpha_2+\alpha_3+1\right) c_4^3+1\right) \right.\right.\right. \\ & \left.\left.\left. +3 \left(\alpha_0+\alpha_1+\alpha_2+\alpha_3+1\right) c_4^4-3\right)+3 \left(\alpha_3 c_3^5-5 c_3 \left(\left(\alpha_0+\alpha_1+\alpha_2+\alpha_3+1\right) c_4^4-1\right)+4 \left(\alpha_0+\alpha_1+\alpha_2+\alpha_3+1\right) c_4^5+4\right)\right)\right] \\ & /\left(60 \left(c_2-c_4\right) \left(c_4-c_1\right) \left(c_4-c_3\right) \left(c_2 c_3 c_4+c_1 \left(c_3 c_4+c_2 \left(c_3+c_4\right)\right)\right)\right) \\
 \mu_4 & \left[-5 c_1^2 \left(-4 c_2^2 \left(3 c_3^2 \left(\beta_0+\left(\alpha_0+\alpha_1+\alpha_2+\alpha_3+1\right) \left(-c_4\right)-1\right)+2 \alpha_3 c_3^3+\left(\alpha_0+\alpha_1+\alpha_2+\alpha_3+1\right) c_4^3+1\right)+\alpha_2 c_2^5+\alpha_2 c_3 c_2^4-8 \alpha_2 c_3^2 c_2^3+c_2 \left(\alpha_3 c_3^4-4 c_3 \left(\left(\alpha_0+\alpha_1+\alpha_2+\alpha_3+1\right) c_4^3+1\right) \right.\right.\right. \\ & \left.\left.\left. +3 \left(\alpha_0+\alpha_1+\alpha_2+\alpha_3+1\right) c_4^4-3\right)+c_3 \left(\alpha_3 c_3^4-4 c_3 \left(\left(\alpha_0+\alpha_1+\alpha_2+\alpha_3+1\right) c_4^3+1\right)+3 \left(\alpha_0+\alpha_1+\alpha_2+\alpha_3+1\right) c_4^4-3\right)\right)+3 \alpha_1 \left(c_2+c_3\right) c_1^6-\alpha_1 \left(5 c_2^2+2 c_3 c_2+5 c_3^2\right) c_1^5 \right. \\ & \left. -5 \alpha_1 c_2 c_3 \left(c_2+c_3\right) c_1^4+40 \alpha_1 c_2^2 c_3^2 c_1^3+\left(c_2+c_3\right) c_1 \left(3 \alpha_2 c_2^5-5 \alpha_2 c_3 c_2^4-5 c_2 \left(\alpha_3 c_3^4-4 c_3 \left(\left(\alpha_0+\alpha_1+\alpha_2+\alpha_3+1\right) c_4^3+1\right)+3 \left(\alpha_0+\alpha_1+\alpha_2+\alpha_3+1\right) c_4^4-3\right) \right.\right. \\ & \left.\left. +3 \left(\alpha_3 c_3^5-5 c_3 \left(\left(\alpha_0+\alpha_1+\alpha_2+\alpha_3+1\right) c_4^4-1\right)+4 \left(\alpha_0+\alpha_1+\alpha_2+\alpha_3+1\right) c_4^5+4\right)\right)+c_2 c_3 \left(3 \alpha_2 c_2^5-5 \alpha_2 c_3 c_2^4-5 c_2 \left(\alpha_3 c_3^4-4 c_3 \left(\left(\alpha_0+\alpha_1+\alpha_2+\alpha_3+1\right) c_4^3+1\right) \right.\right.\right. \\ & \left.\left.\left. +3 \left(\alpha_0+\alpha_1+\alpha_2+\alpha_3+1\right) c_4^4-3\right)+3 \left(\alpha_3 c_3^5-5 c_3 \left(\left(\alpha_0+\alpha_1+\alpha_2+\alpha_3+1\right) c_4^4-1\right)+4 \left(\alpha_0+\alpha_1+\alpha_2+\alpha_3+1\right) c_4^5+4\right)\right)\right] \\ & /\left(60 \left(c_2-c_4\right) \left(c_4-c_1\right) \left(c_4-c_3\right) \left(c_2 c_3 c_4+c_1 \left(c_3 c_4+c_2 \left(c_3+c_4\right)\right)\right)\right) \\
\end{array}
$
\end{turn}
}
\end{table}

\section{Variable stepsize {\sc Limm-w} coefficients}
\label{sec:limmwvarcoeff}

Tables \ref{tbl:limmwvcoef1} -- \ref{tbl:limmwvcoef5} contain the variable stepsize coefficient expressions for {\sc Limm-w} methods of order 1-5, parameterized by the $c_i$'s defined in \eqref{eqn:abscissae}, and the constant coeffcients.

\begin{table}[H]
\centering
\caption{{\sc Limm-w} 1-step order 1 coefficients.}
\label{tbl:limmwvcoef1}
{\footnotesize
\renewcommand{\arraystretch}{2}
$\begin{array}{c|cc}
 i & -1 & 0 \\
\hline
 \alpha_i & 1 & -1 \\
 \beta_i & 0 & 1 \\
 \mu_i & 1 & -1 \\
\end{array}
$
}
\end{table}

\begin{table}[H]
\centering
\caption{{\sc Limm-w} 2-step order 2 variable stepsize coefficients.}
\label{tbl:limmwvcoef2}
{\footnotesize
\renewcommand{\arraystretch}{2}
$\begin{array}{c|ccc}
 i & -1 & 0 & 1 \\
\hline
 \alpha_i & 1 & -\frac{146619050}{133414177} & \frac{13204873}{133414177} \\
 \beta_i & 0 & \frac{1}{2} \left(\left(\alpha_0+1\right) c_1+\frac{1}{c_1}+2\right) & \frac{\left(\alpha_0+1\right) c_1^2-1}{2 c_1} \\
 \mu_i & \frac{1}{2} \left(1-\left(\alpha_0+1\right) c_1^2\right) & \frac{\left(c_1+1\right) \left(\left(\alpha_0+1\right) c_1^2-1\right)}{2 c_1} & \frac{1}{2 c_1}-\frac{1}{2} \left(\alpha_0+1\right) c_1 \\
\end{array}
$
}
\end{table}

\begin{table}[H]
\centering
\caption{{\sc Limm-w} 3-step order 3 variable stepsize coefficients.}
\label{tbl:limmwvcoef3}
{\footnotesize
\renewcommand{\arraystretch}{2}
$\begin{array}{c|c}
\hline
 \alpha_{-1} & 1 \\
 \beta_{-1} & 0 \\
 \mu_{-1} & \left[\alpha_1 c_1^3-3 c_1 \left(\left(\alpha_0+\alpha_1+1\right) c_2^2-1\right)+2 \left(\alpha_0+\alpha_1+1\right) c_2^3+2\right]/\left(6 \left(c_1+1\right)\right) \\
\hline
 \alpha_0 & -192592391/118869921 \\
 \beta_0 & \left[\alpha_1 c_1^3-3 \alpha_1 c_2 c_1^2+3 c_1 \left(\left(\alpha_0+\alpha_1+1\right) c_2^2+2 c_2+1\right)-\left(\alpha_0+\alpha_1+1\right) c_2^3+3 c_2+2\right]/\left(6 c_1 c_2\right) \\
 \mu_0 & -\left[\left(c_2+1\right) \left(\alpha_1 c_1^3-3 c_1 \left(\left(\alpha_0+\alpha_1+1\right) c_2^2-1\right)+2 \left(\alpha_0+\alpha_1+1\right) c_2^3+2\right)\right]/\left(6 c_1 c_2\right) \\
\hline
 \alpha_1 & 41981416/61945353 \\
 \beta_1 & \left[-2 \alpha_1 c_1^3+3 \alpha_1 c_2 c_1^2-\left(\alpha_0+\alpha_1+1\right) c_2^3+3 c_2+2\right]/\left(6 c_1 \left(c_1-c_2\right)\right) \\
 \mu_1 & -\left[\left(c_2+1\right) \left(\alpha_1 c_1^3-3 c_1 \left(\left(\alpha_0+\alpha_1+1\right) c_2^2-1\right)+2 \left(\alpha_0+\alpha_1+1\right) c_2^3+2\right)\right] \\ & /\left(6 c_1 \left(c_1+1\right) \left(c_1-c_2\right)\right) \\
\hline
 \alpha_2 & -5229175002546/90906657005273 \\
 \beta_2 & -\left[\alpha_1 c_1^3-3 c_1 \left(\left(\alpha_0+\alpha_1+1\right) c_2^2-1\right)+2 \left(\alpha_0+\alpha_1+1\right) c_2^3+2\right]/\left(6 \left(c_1-c_2\right) c_2\right) \\
 \mu_2 & \left[\alpha_1 c_1^3-3 c_1 \left(\left(\alpha_0+\alpha_1+1\right) c_2^2-1\right)+2 \left(\alpha_0+\alpha_1+1\right) c_2^3+2\right]/\left(6 \left(c_1-c_2\right) c_2\right) \\
\end{array}
$
}
\end{table}

\begin{table}[H]
\centering
\caption{{\sc Limm-w} 4-step order 4 variable stepsize coefficients.}
\label{tbl:limmwvcoef4}
{\tiny
\renewcommand{\arraystretch}{2}
\begin{turn}{-90}
$\begin{array}{c|l}
\hline
 \alpha_{-1} & 1 \\
 \beta_{-1} & 0 \\
 \mu_{-1} & \left[-\alpha_1 c_1^4+2 \alpha_1 c_2 c_1^3+2 c_1 \left(\alpha_2 c_2^3-3 c_2 \left(\left(\alpha_0+\alpha_1+\alpha_2+1\right) c_3^2-1\right)+2 \left(\alpha_0+\alpha_1+\alpha_2+1\right) c_3^3+2\right)-\alpha_2 c_2^4-3 \left(\alpha_0+\alpha_1+\alpha_2+1\right) c_3^4+4 c_2 \left(\left(\alpha_0+\alpha_1+\alpha_2+1\right) c_3^3+1\right)+3\right]/\left(12 \left(c_1+1\right) \left(c_2+1\right)\right) \\
\hline
 \alpha_0 & -68547635/35752838 \\
 \beta_0 & \left[-\alpha_1 c_1^4+2 \alpha_1 \left(c_2+c_3\right) c_1^3-6 \alpha_1 c_2 c_3 c_1^2+2 c_1 \left(\alpha_2 c_2^3-3 \alpha_2 c_3 c_2^2+3 c_2 \left(\left(\alpha_0+\alpha_1+\alpha_2+1\right) c_3^2+2 c_3+1\right)-\left(\alpha_0+\alpha_1+\alpha_2+1\right) c_3^3+3 c_3+2\right)+\alpha_0 c_3^4+\alpha_1 c_3^4-\alpha_2 c_2^4+\alpha_2 c_3^4+2 \alpha_2 c_2^3 c_3 \right. \\ & \left. -2 c_2 \left(\left(\alpha_0+\alpha_1+\alpha_2+1\right) c_3^3-3 c_3-2\right)+c_3^4+4 c_3+3\right]/\left(12 c_1 c_2 c_3\right) \\
 \mu_0 & \left[\left(c_3+1\right) \left(\alpha_1 c_1^4-2 \alpha_1 c_2 c_1^3-2 c_1 \left(\alpha_2 c_2^3-3 c_2 \left(\left(\alpha_0+\alpha_1+\alpha_2+1\right) c_3^2-1\right)+2 \left(\alpha_0+\alpha_1+\alpha_2+1\right) c_3^3+2\right)+\alpha_2 c_2^4+3 \left(\alpha_0+\alpha_1+\alpha_2+1\right) c_3^4-4 c_2 \left(\left(\alpha_0+\alpha_1+\alpha_2+1\right) c_3^3+1\right)-3\right)\right]/\left(12 c_1 c_2 c_3\right) \\
\hline
 \alpha_1 & 332147775/246829693 \\
 \beta_1 & -\left[3 \alpha_1 c_1^4-4 \alpha_1 \left(c_2+c_3\right) c_1^3+6 \alpha_1 c_2 c_3 c_1^2+\alpha_0 c_3^4+\alpha_1 c_3^4-\alpha_2 c_2^4+\alpha_2 c_3^4+2 \alpha_2 c_2^3 c_3-2 c_2 \left(\left(\alpha_0+\alpha_1+\alpha_2+1\right) c_3^3-3 c_3-2\right)+c_3^4+4 c_3+3\right]/\left(12 c_1 \left(c_1-c_2\right) \left(c_1-c_3\right)\right) \\
 \mu_1 & -\left[\left(c_3+1\right) \left(\alpha_1 c_1^4-2 \alpha_1 c_2 c_1^3-2 c_1 \left(\alpha_2 c_2^3-3 c_2 \left(\left(\alpha_0+\alpha_1+\alpha_2+1\right) c_3^2-1\right)+2 \left(\alpha_0+\alpha_1+\alpha_2+1\right) c_3^3+2\right)+\alpha_2 c_2^4+3 \left(\alpha_0+\alpha_1+\alpha_2+1\right) c_3^4-4 c_2 \left(\left(\alpha_0+\alpha_1+\alpha_2+1\right) c_3^3+1\right)-3\right)\right] \\ & /\left(12 c_1 \left(c_1+1\right) \left(c_1-c_2\right) \left(c_1-c_3\right)\right) \\
\hline
 \alpha_2 & -120323842/247754257 \\
 \beta_2 & -\left[-\alpha_1 c_1^4+2 \alpha_1 c_3 c_1^3-2 c_1 \left(2 \alpha_2 c_2^3-3 \alpha_2 c_3 c_2^2+\left(\alpha_0+\alpha_1+\alpha_2+1\right) c_3^3-3 c_3-2\right)+\alpha_0 c_3^4+\alpha_1 c_3^4+3 \alpha_2 c_2^4+\alpha_2 c_3^4-4 \alpha_2 c_2^3 c_3+c_3^4+4 c_3+3\right]/\left(12 c_2 \left(c_2-c_1\right) \left(c_2-c_3\right)\right) \\
 \mu_2 & \left[\left(c_3+1\right) \left(\alpha_1 c_1^4-2 \alpha_1 c_2 c_1^3-2 c_1 \left(\alpha_2 c_2^3-3 c_2 \left(\left(\alpha_0+\alpha_1+\alpha_2+1\right) c_3^2-1\right)+2 \left(\alpha_0+\alpha_1+\alpha_2+1\right) c_3^3+2\right)+\alpha_2 c_2^4+3 \left(\alpha_0+\alpha_1+\alpha_2+1\right) c_3^4-4 c_2 \left(\left(\alpha_0+\alpha_1+\alpha_2+1\right) c_3^3+1\right)-3\right)\right] \\ & /\left(12 \left(c_1-c_2\right) c_2 \left(c_2+1\right) \left(c_2-c_3\right)\right) \\
\hline
 \alpha_3 & 11382486133370227314625/198763375884603824550058 \\
 \beta_3 & \left[-\alpha_1 c_1^4+2 \alpha_1 c_2 c_1^3+2 c_1 \left(\alpha_2 c_2^3-3 c_2 \left(\left(\alpha_0+\alpha_1+\alpha_2+1\right) c_3^2-1\right)+2 \left(\alpha_0+\alpha_1+\alpha_2+1\right) c_3^3+2\right)-\alpha_2 c_2^4-3 \left(\alpha_0+\alpha_1+\alpha_2+1\right) c_3^4+4 c_2 \left(\left(\alpha_0+\alpha_1+\alpha_2+1\right) c_3^3+1\right)+3\right] \\ & /\left(12 \left(c_1-c_3\right) c_3 \left(c_3-c_2\right)\right) \\
 \mu_3 & \left[\alpha_1 c_1^4-2 \alpha_1 c_2 c_1^3-2 c_1 \left(\alpha_2 c_2^3-3 c_2 \left(\left(\alpha_0+\alpha_1+\alpha_2+1\right) c_3^2-1\right)+2 \left(\alpha_0+\alpha_1+\alpha_2+1\right) c_3^3+2\right)+\alpha_2 c_2^4+3 \left(\alpha_0+\alpha_1+\alpha_2+1\right) c_3^4-4 c_2 \left(\left(\alpha_0+\alpha_1+\alpha_2+1\right) c_3^3+1\right)-3\right] \\ & /\left(12 \left(c_1-c_3\right) c_3 \left(c_3-c_2\right)\right) \\
\end{array}
$
\end{turn}
}
\end{table}

\begin{table}[H]
\centering
\caption{{\sc Limm-w} 5-step order 5 variable stepsize coefficients.}
\label{tbl:limmwvcoef5}
{\fontsize{3pt}{4pt}\selectfont
\renewcommand{\arraystretch}{2}
\begin{turn}{-90}
$\begin{array}{c|l}
\hline
 \alpha_{-1} & 1 \\
 \beta_{-1} & 0 \\
 \mu_{-1} & \left[3 \alpha_1 c_1^5-5 \alpha_1 \left(c_2+c_3\right) c_1^4+10 \alpha_1 c_2 c_3 c_1^3-5 c_1 \left(\alpha_2 c_2^4-2 \alpha_2 c_3 c_2^3-2 c_2 \left(\alpha_3 c_3^3-3 c_3 \left(\left(\alpha_0+\alpha_1+\alpha_2+\alpha_3+1\right) c_4^2-1\right)+2 \left(\alpha_0+\alpha_1+\alpha_2+\alpha_3+1\right) c_4^3+2\right)+\alpha_3 c_3^4+3 \left(\alpha_0+\alpha_1+\alpha_2+\alpha_3+1\right) c_4^4 \right.\right. \\ & \left.\left. -4 c_3 \left(\left(\alpha_0+\alpha_1+\alpha_2+\alpha_3+1\right) c_4^3+1\right)-3\right)+3 \alpha_2 c_2^5-5 \alpha_2 c_2^4 c_3-5 c_2 \left(\alpha_3 c_3^4-4 c_3 \left(\left(\alpha_0+\alpha_1+\alpha_2+\alpha_3+1\right) c_4^3+1\right)+3 \left(\alpha_0+\alpha_1+\alpha_2+\alpha_3+1\right) c_4^4-3\right) \right. \\ & \left. +3 \left(\alpha_3 c_3^5-5 c_3 \left(\left(\alpha_0+\alpha_1+\alpha_2+\alpha_3+1\right) c_4^4-1\right)+4 \left(\alpha_0+\alpha_1+\alpha_2+\alpha_3+1\right) c_4^5+4\right)\right]/\left(60 \left(c_1+1\right) \left(c_2+1\right) \left(c_3+1\right)\right) \\
\hline
 \alpha_0 & -170476503/75237041 \\
 \beta_0 & \left[3 \alpha_1 c_1^5-5 \alpha_1 \left(c_2+c_3+c_4\right) c_1^4+10 \alpha_1 \left(c_3 c_4+c_2 \left(c_3+c_4\right)\right) c_1^3-30 \alpha_1 c_2 c_3 c_4 c_1^2-5 c_1 \left(\alpha_2 c_2^4-2 \alpha_2 \left(c_3+c_4\right) c_2^3+6 \alpha_2 c_3 c_4 c_2^2-2 c_2 \left(\alpha_3 c_3^3-3 \alpha_3 c_4 c_3^2+3 c_3 \left(\left(\alpha_0+\alpha_1+\alpha_2+\alpha_3+1\right) c_4^2+2 c_4+1\right) \right.\right.\right. \\ & \left.\left.\left. -\left(\alpha_0+\alpha_1+\alpha_2+\alpha_3+1\right) c_4^3+3 c_4+2\right)-\alpha_0 c_4^4-\alpha_1 c_4^4-\alpha_2 c_4^4+\alpha_3 c_3^4-\alpha_3 c_4^4-2 \alpha_3 c_3^3 c_4+2 c_3 \left(\left(\alpha_0+\alpha_1+\alpha_2+\alpha_3+1\right) c_4^3-3 c_4-2\right)-c_4^4-4 c_4-3\right)-3 \alpha_0 c_4^5+5 \alpha_0 c_3 c_4^4-3 \alpha_1 c_4^5 \right. \\ & \left. +5 \alpha_1 c_3 c_4^4+3 \alpha_2 c_2^5-3 \alpha_2 c_4^5+5 \alpha_2 c_3 c_4^4+10 \alpha_2 c_2^3 c_3 c_4-5 \alpha_2 c_2^4 \left(c_3+c_4\right)+3 \alpha_3 c_3^5-3 \alpha_3 c_4^5+5 \alpha_3 c_3 c_4^4-5 \alpha_3 c_3^4 c_4-5 c_2 \left(\alpha_3 c_3^4-2 \alpha_3 c_4 c_3^3+2 c_3 \left(\left(\alpha_0+\alpha_1+\alpha_2+\alpha_3+1\right) c_4^3-3 c_4-2\right) \right.\right. \\ & \left.\left. -\left(\alpha_0+\alpha_1+\alpha_2+\alpha_3+1\right) c_4^4-4 c_4-3\right)-3 c_4^5+5 c_3 c_4^4+15 c_3+20 c_3 c_4+15 c_4+12\right]/\left(60 c_1 c_2 c_3 c_4\right) \\
 \mu_0 & -\left[\left(c_4+1\right) \left(3 \alpha_1 c_1^5-5 \alpha_1 \left(c_2+c_3\right) c_1^4+10 \alpha_1 c_2 c_3 c_1^3-5 c_1 \left(\alpha_2 c_2^4-2 \alpha_2 c_3 c_2^3-2 c_2 \left(\alpha_3 c_3^3-3 c_3 \left(\left(\alpha_0+\alpha_1+\alpha_2+\alpha_3+1\right) c_4^2-1\right)+2 \left(\alpha_0+\alpha_1+\alpha_2+\alpha_3+1\right) c_4^3+2\right)+\alpha_3 c_3^4 \right.\right.\right. \\ & \left.\left.\left. +3 \left(\alpha_0+\alpha_1+\alpha_2+\alpha_3+1\right) c_4^4-4 c_3 \left(\left(\alpha_0+\alpha_1+\alpha_2+\alpha_3+1\right) c_4^3+1\right)-3\right)+3 \alpha_2 c_2^5-5 \alpha_2 c_2^4 c_3-5 c_2 \left(\alpha_3 c_3^4-4 c_3 \left(\left(\alpha_0+\alpha_1+\alpha_2+\alpha_3+1\right) c_4^3+1\right)+3 \left(\alpha_0+\alpha_1+\alpha_2+\alpha_3+1\right) c_4^4-3\right) \right.\right. \\ & \left.\left. +3 \left(\alpha_3 c_3^5-5 c_3 \left(\left(\alpha_0+\alpha_1+\alpha_2+\alpha_3+1\right) c_4^4-1\right)+4 \left(\alpha_0+\alpha_1+\alpha_2+\alpha_3+1\right) c_4^5+4\right)\right)\right]/\left(60 c_1 c_2 c_3 c_4\right) \\
\hline
 \alpha_1 & 124149029/52265116 \\
 \beta_1 & \left[-12 \alpha_1 c_1^5+15 \alpha_1 \left(c_2+c_3+c_4\right) c_1^4-20 \alpha_1 \left(c_3 c_4+c_2 \left(c_3+c_4\right)\right) c_1^3+30 \alpha_1 c_2 c_3 c_4 c_1^2-3 \alpha_0 c_4^5+5 \alpha_0 c_3 c_4^4-3 \alpha_1 c_4^5+5 \alpha_1 c_3 c_4^4+3 \alpha_2 c_2^5-3 \alpha_2 c_4^5+5 \alpha_2 c_3 c_4^4+10 \alpha_2 c_2^3 c_3 c_4-5 \alpha_2 c_2^4 \left(c_3+c_4\right) \right. \\ & \left. +3 \alpha_3 c_3^5-3 \alpha_3 c_4^5+5 \alpha_3 c_3 c_4^4-5 \alpha_3 c_3^4 c_4-5 c_2 \left(\alpha_3 c_3^4-2 \alpha_3 c_4 c_3^3+2 c_3 \left(\left(\alpha_0+\alpha_1+\alpha_2+\alpha_3+1\right) c_4^3-3 c_4-2\right)-\left(\alpha_0+\alpha_1+\alpha_2+\alpha_3+1\right) c_4^4-4 c_4-3\right)-3 c_4^5+5 c_3 c_4^4+15 c_3+20 c_3 c_4 \right. \\ & \left. +15 c_4+12\right]/\left(60 c_1 \left(c_1-c_2\right) \left(c_1-c_3\right) \left(c_1-c_4\right)\right) \\
 \mu_1 & -\left[\left(c_4+1\right) \left(3 \alpha_1 c_1^5-5 \alpha_1 \left(c_2+c_3\right) c_1^4+10 \alpha_1 c_2 c_3 c_1^3-5 c_1 \left(\alpha_2 c_2^4-2 \alpha_2 c_3 c_2^3-2 c_2 \left(\alpha_3 c_3^3-3 c_3 \left(\left(\alpha_0+\alpha_1+\alpha_2+\alpha_3+1\right) c_4^2-1\right)+2 \left(\alpha_0+\alpha_1+\alpha_2+\alpha_3+1\right) c_4^3+2\right)+\alpha_3 c_3^4 \right.\right.\right. \\ & \left.\left.\left. +3 \left(\alpha_0+\alpha_1+\alpha_2+\alpha_3+1\right) c_4^4-4 c_3 \left(\left(\alpha_0+\alpha_1+\alpha_2+\alpha_3+1\right) c_4^3+1\right)-3\right)+3 \alpha_2 c_2^5-5 \alpha_2 c_2^4 c_3-5 c_2 \left(\alpha_3 c_3^4-4 c_3 \left(\left(\alpha_0+\alpha_1+\alpha_2+\alpha_3+1\right) c_4^3+1\right)+3 \left(\alpha_0+\alpha_1+\alpha_2+\alpha_3+1\right) c_4^4-3\right) \right.\right. \\ & \left.\left. +3 \left(\alpha_3 c_3^5-5 c_3 \left(\left(\alpha_0+\alpha_1+\alpha_2+\alpha_3+1\right) c_4^4-1\right)+4 \left(\alpha_0+\alpha_1+\alpha_2+\alpha_3+1\right) c_4^5+4\right)\right)\right]/\left(60 c_1 \left(c_1+1\right) \left(c_1-c_2\right) \left(c_1-c_3\right) \left(c_1-c_4\right)\right) \\
\hline
 \alpha_2 & -53697673/39342191 \\
 \beta_2 & \left[3 \alpha_1 c_1^5-5 \alpha_1 \left(c_3+c_4\right) c_1^4+10 \alpha_1 c_3 c_4 c_1^3+5 c_1 \left(3 \alpha_2 c_2^4-4 \alpha_2 \left(c_3+c_4\right) c_2^3+6 \alpha_2 c_3 c_4 c_2^2+\alpha_0 c_4^4+\alpha_1 c_4^4+\alpha_2 c_4^4-\alpha_3 c_3^4+\alpha_3 c_4^4+2 \alpha_3 c_3^3 c_4-2 c_3 \left(\left(\alpha_0+\alpha_1+\alpha_2+\alpha_3+1\right) c_4^3-3 c_4-2\right)+c_4^4 \right.\right. \\ & \left.\left. +4 c_4+3\right)-3 \alpha_0 c_4^5+5 \alpha_0 c_3 c_4^4-3 \alpha_1 c_4^5+5 \alpha_1 c_3 c_4^4-12 \alpha_2 c_2^5-3 \alpha_2 c_4^5+5 \alpha_2 c_3 c_4^4-20 \alpha_2 c_2^3 c_3 c_4+15 \alpha_2 c_2^4 \left(c_3+c_4\right)+3 \alpha_3 c_3^5-3 \alpha_3 c_4^5+5 \alpha_3 c_3 c_4^4-5 \alpha_3 c_3^4 c_4-3 c_4^5+5 c_3 c_4^4+15 c_3+20 c_3 c_4 \right. \\ & \left. +15 c_4+12\right]/\left(60 c_2 \left(c_2-c_1\right) \left(c_2-c_3\right) \left(c_2-c_4\right)\right) \\
 \mu_2 & -\left[\left(c_4+1\right) \left(3 \alpha_1 c_1^5-5 \alpha_1 \left(c_2+c_3\right) c_1^4+10 \alpha_1 c_2 c_3 c_1^3-5 c_1 \left(\alpha_2 c_2^4-2 \alpha_2 c_3 c_2^3-2 c_2 \left(\alpha_3 c_3^3-3 c_3 \left(\left(\alpha_0+\alpha_1+\alpha_2+\alpha_3+1\right) c_4^2-1\right)+2 \left(\alpha_0+\alpha_1+\alpha_2+\alpha_3+1\right) c_4^3+2\right)+\alpha_3 c_3^4 \right.\right.\right. \\ & \left.\left.\left. +3 \left(\alpha_0+\alpha_1+\alpha_2+\alpha_3+1\right) c_4^4-4 c_3 \left(\left(\alpha_0+\alpha_1+\alpha_2+\alpha_3+1\right) c_4^3+1\right)-3\right)+3 \alpha_2 c_2^5-5 \alpha_2 c_2^4 c_3-5 c_2 \left(\alpha_3 c_3^4-4 c_3 \left(\left(\alpha_0+\alpha_1+\alpha_2+\alpha_3+1\right) c_4^3+1\right)+3 \left(\alpha_0+\alpha_1+\alpha_2+\alpha_3+1\right) c_4^4-3\right) \right.\right. \\ & \left.\left. +3 \left(\alpha_3 c_3^5-5 c_3 \left(\left(\alpha_0+\alpha_1+\alpha_2+\alpha_3+1\right) c_4^4-1\right)+4 \left(\alpha_0+\alpha_1+\alpha_2+\alpha_3+1\right) c_4^5+4\right)\right)\right]/\left(60 c_2 \left(c_2+1\right) \left(c_2-c_1\right) \left(c_2-c_3\right) \left(c_2-c_4\right)\right) \\
\hline
 \alpha_3 & 67073128/206463953 \\
 \beta_3 & \left[3 \alpha_1 c_1^5-5 \alpha_1 \left(c_2+c_4\right) c_1^4+10 \alpha_1 c_2 c_4 c_1^3-5 c_1 \left(\alpha_2 c_2^4-2 \alpha_2 c_4 c_2^3+2 c_2 \left(2 \alpha_3 c_3^3-3 \alpha_3 c_4 c_3^2+\left(\alpha_0+\alpha_1+\alpha_2+\alpha_3+1\right) c_4^3-3 c_4-2\right)-\alpha_0 c_4^4-\alpha_1 c_4^4-\alpha_2 c_4^4-3 \alpha_3 c_3^4-\alpha_3 c_4^4+4 \alpha_3 c_3^3 c_4-c_4^4 \right.\right. \\ & \left.\left. -4 c_4-3\right)+3 \alpha_2 c_2^5-5 \alpha_2 c_2^4 c_4+5 c_2 \left(3 \alpha_3 c_3^4-4 \alpha_3 c_4 c_3^3+\left(\alpha_0+\alpha_1+\alpha_2+\alpha_3+1\right) c_4^4+4 c_4+3\right)-3 \left(4 \alpha_3 c_3^5-5 \alpha_3 c_4 c_3^4+\left(\alpha_0+\alpha_1+\alpha_2+\alpha_3+1\right) c_4^5-5 c_4-4\right)\right] \\ & /\left(60 c_3 \left(c_3-c_1\right) \left(c_3-c_2\right) \left(c_3-c_4\right)\right) \\
 \mu_3 & -\left[\left(c_4+1\right) \left(3 \alpha_1 c_1^5-5 \alpha_1 \left(c_2+c_3\right) c_1^4+10 \alpha_1 c_2 c_3 c_1^3-5 c_1 \left(\alpha_2 c_2^4-2 \alpha_2 c_3 c_2^3-2 c_2 \left(\alpha_3 c_3^3-3 c_3 \left(\left(\alpha_0+\alpha_1+\alpha_2+\alpha_3+1\right) c_4^2-1\right)+2 \left(\alpha_0+\alpha_1+\alpha_2+\alpha_3+1\right) c_4^3+2\right)+\alpha_3 c_3^4 \right.\right.\right. \\ & \left.\left.\left. +3 \left(\alpha_0+\alpha_1+\alpha_2+\alpha_3+1\right) c_4^4-4 c_3 \left(\left(\alpha_0+\alpha_1+\alpha_2+\alpha_3+1\right) c_4^3+1\right)-3\right)+3 \alpha_2 c_2^5-5 \alpha_2 c_2^4 c_3-5 c_2 \left(\alpha_3 c_3^4-4 c_3 \left(\left(\alpha_0+\alpha_1+\alpha_2+\alpha_3+1\right) c_4^3+1\right)+3 \left(\alpha_0+\alpha_1+\alpha_2+\alpha_3+1\right) c_4^4-3\right) \right.\right. \\ & \left.\left. +3 \left(\alpha_3 c_3^5-5 c_3 \left(\left(\alpha_0+\alpha_1+\alpha_2+\alpha_3+1\right) c_4^4-1\right)+4 \left(\alpha_0+\alpha_1+\alpha_2+\alpha_3+1\right) c_4^5+4\right)\right)\right]/\left(60 c_3 \left(c_3+1\right) \left(c_3-c_1\right) \left(c_3-c_2\right) \left(c_3-c_4\right)\right) \\
\hline
 \alpha_4 & -2219582774479398588921363466455/31940845355796541711865631316388 \\
 \beta_4 & \left[3 \alpha_1 c_1^5-5 \alpha_1 \left(c_2+c_3\right) c_1^4+10 \alpha_1 c_2 c_3 c_1^3-5 c_1 \left(\alpha_2 c_2^4-2 \alpha_2 c_3 c_2^3-2 c_2 \left(\alpha_3 c_3^3-3 c_3 \left(\left(\alpha_0+\alpha_1+\alpha_2+\alpha_3+1\right) c_4^2-1\right)+2 \left(\alpha_0+\alpha_1+\alpha_2+\alpha_3+1\right) c_4^3+2\right)+\alpha_3 c_3^4+3 \left(\alpha_0+\alpha_1+\alpha_2+\alpha_3+1\right) c_4^4 \right.\right. \\ & \left.\left. -4 c_3 \left(\left(\alpha_0+\alpha_1+\alpha_2+\alpha_3+1\right) c_4^3+1\right)-3\right)+3 \alpha_2 c_2^5-5 \alpha_2 c_2^4 c_3-5 c_2 \left(\alpha_3 c_3^4-4 c_3 \left(\left(\alpha_0+\alpha_1+\alpha_2+\alpha_3+1\right) c_4^3+1\right)+3 \left(\alpha_0+\alpha_1+\alpha_2+\alpha_3+1\right) c_4^4-3\right) \right. \\ & \left. +3 \left(\alpha_3 c_3^5-5 c_3 \left(\left(\alpha_0+\alpha_1+\alpha_2+\alpha_3+1\right) c_4^4-1\right)+4 \left(\alpha_0+\alpha_1+\alpha_2+\alpha_3+1\right) c_4^5+4\right)\right]/\left(60 c_4 \left(c_4-c_1\right) \left(c_4-c_2\right) \left(c_4-c_3\right)\right) \\
 \mu_4 & \left[3 \alpha_1 c_1^5-5 \alpha_1 \left(c_2+c_3\right) c_1^4+10 \alpha_1 c_2 c_3 c_1^3-5 c_1 \left(\alpha_2 c_2^4-2 \alpha_2 c_3 c_2^3-2 c_2 \left(\alpha_3 c_3^3-3 c_3 \left(\left(\alpha_0+\alpha_1+\alpha_2+\alpha_3+1\right) c_4^2-1\right)+2 \left(\alpha_0+\alpha_1+\alpha_2+\alpha_3+1\right) c_4^3+2\right)+\alpha_3 c_3^4+3 \left(\alpha_0+\alpha_1+\alpha_2+\alpha_3+1\right) c_4^4 \right.\right. \\ & \left.\left. -4 c_3 \left(\left(\alpha_0+\alpha_1+\alpha_2+\alpha_3+1\right) c_4^3+1\right)-3\right)+3 \alpha_2 c_2^5-5 \alpha_2 c_2^4 c_3-5 c_2 \left(\alpha_3 c_3^4-4 c_3 \left(\left(\alpha_0+\alpha_1+\alpha_2+\alpha_3+1\right) c_4^3+1\right)+3 \left(\alpha_0+\alpha_1+\alpha_2+\alpha_3+1\right) c_4^4-3\right) \right. \\ & \left. +3 \left(\alpha_3 c_3^5-5 c_3 \left(\left(\alpha_0+\alpha_1+\alpha_2+\alpha_3+1\right) c_4^4-1\right)+4 \left(\alpha_0+\alpha_1+\alpha_2+\alpha_3+1\right) c_4^5+4\right)\right]/\left(60 \left(c_1-c_4\right) c_4 \left(c_4-c_2\right) \left(c_4-c_3\right)\right) \\
\end{array}
$
\end{turn}
}
\end{table}

\fi 

\bibliographystyle{siamplain}
\bibliography{Bib/ode_general,Bib/ode_imex,Bib/ode_krylov,Bib/sandu,Bib/ode_exponential,limm_bibliography}

\end{document}